\documentclass{amsart}
\usepackage{enumerate}
\usepackage{tikz}
\usetikzlibrary{calc,decorations.markings,decorations.pathmorphing,decorations.pathreplacing,automata,positioning}
\usepackage{hyperref}
\usepackage{mathrsfs}
\usepackage{mathabx}
\usepackage{amsfonts}
\usepackage{latexsym}
\usepackage{epsfig}
\usepackage{indentfirst}
\graphicspath{{figures/}}
\usepackage{amsmath}
\usepackage{mathtools}
\usepackage{amssymb}
\usepackage{amsthm}
\usepackage{graphicx}
\usepackage{amssymb}
\usepackage{amscd}
\usepackage{latexsym}
\usepackage{bbm}
\usepackage{tikz-cd}
\usepackage{color}
\usepackage{adjustbox}
\DeclareGraphicsExtensions{.eps}
\usepackage{geometry}
\usepackage{fancyhdr}       

\newtheorem{theorem}{Theorem}
\newtheorem{Theorem}{Theorem}[section]
\newtheorem{lem}[Theorem]{Lemma}
\newtheorem{prop}[Theorem]{Proposition}

\theoremstyle{definition}
\newtheorem{defni}[Theorem]{Definition}

\theoremstyle{remark}
\newtheorem{Remark}[Theorem]{Remark}

\newcommand{\abs}[1]{\left| #1 \right|}

\newcommand{\calA}{\mathcal{A}}
\newcommand{\calB}{\mathcal{B}}
\newcommand{\calC}{\mathcal{C}}
\newcommand{\calD}{\mathcal{D}}

\newcommand{\calH}{\mathcal{H}}

\newcommand{\calK}{\mathcal{K}}
\newcommand{\calL}{\mathcal{L}}
\newcommand{\calM}{\mathcal{M}}
\newcommand{\calP}{\mathcal{P}}

\newcommand{\calX}{\mathcal{X}}
\newcommand{\calY}{\mathcal{Y}}
\newcommand{\calZ}{\mathcal{Z}}

\newcommand{\scrA}{\mathscr{A}}
\newcommand{\scrU}{\mathscr{U}}
\newcommand{\scrV}{\mathscr{V}}

\newcommand\Z{_{\mathcal{K}_{1}}\mathcal{Z}^{\mathcal{K}_{2}}}
\newcommand\abZ{_{\mathcal{K}_{1}}\overline{\mathcal{Z}}^{\mathcal{K}_{2}}}
\newcommand\AbZ{_{\mathcal{K}}\overline{\mathcal{Z}}^{\mathcal{K}}}

\newcommand\bbE{\mathbb{E}}
\newcommand\bbF{\mathbb{F}}
\newcommand\bbH{\mathbb{H}}
\newcommand\bbI{\mathbb{I}}
\newcommand\bbL{\mathbb{L}}
\newcommand\bbN{\mathbb{N}}
\newcommand\bbP{\mathbb{P}}
\newcommand\bbR{\mathbb{R}}
\newcommand\bbS{\mathbb{S}}
\newcommand\bbT{\mathbb{T}}
\newcommand\bbQ{\mathbb{Q}}

\newcommand\bbX{\mathbb{X}}
\newcommand\bbZ{\mathbb{Z}}

\newcommand{\bfa}{\mathbf{a}}
\newcommand{\bfd}{\mathbf{d}}
\newcommand{\bfe}{\mathbf{e}}
\newcommand{\bfi}{\mathbf{i}}
\newcommand{\bfj}{\mathbf{j}}

\newcommand{\bfx}{\mathbf{x}}
\newcommand{\bfy}{\mathbf{y}}

\newcommand{\bfE}{\mathbf{E}}
\newcommand{\bfI}{\mathbf{I}}

\newcommand{\bfX}{\mathbf{X}}

\newcommand{\frs}{\mathfrak{s}}

\newcommand{\ve}[1]{\boldsymbol{\mathbf{#1}}}
\newcommand{\es}{\ve{e}}
\newcommand{\ks}{\ve{k}}
\newcommand{\js}{\ve{j}}
\newcommand{\xs}{\ve{x}}
\newcommand{\ys}{\ve{y}}
\newcommand{\zs}{\ve{z}}
\newcommand{\ws}{\ve{w}}

\newcommand{\ps}{\ve{p}}

\newcommand{\as}{\ve{\alpha}}
\newcommand{\bs}{\ve{\beta}}
\newcommand{\gs}{\ve{\gamma}}

\newcommand\Ta{\mathbb{T}_\alpha}
\newcommand\Tb{\mathbb{T}_\beta}

\newcommand\alphas{\mbox{\boldmath$\alpha$}}
\newcommand\betas{\mbox{\boldmath$\beta$}}

\newcommand\Boxtimes{\Hat{\boxtimes}}

\newcommand{\CFL}{\mathcal{CFL}}
\newcommand{\HFL}{\mathcal{HFL}}

\newcommand\HFm{\mathbf{HF}^-}

\newcommand\CFKm{\mathcal{CFK}^{-}}

\newcommand\spin{\mathrm{Spin}}
\newcommand\Sym{\mathrm{Sym}}
\newcommand\gr{\ve{\mathrm{gr}}}

\newcommand\lk {{\operatorname{lk}}}

\newcommand{\DAperturbdiagram}[3]{%
	\begin{tikzcd}[
		row sep=small,
		column sep=0.35em,
		ampersand replacement=\&
		]
		a \arrow[d, Rightarrow] \& x \arrow[d] \& \\
		\Delta \arrow[dd, Rightarrow]
		\arrow[dr, Rightarrow, bend right=10]
		\& #1 \arrow[d] \arrow[ddddddddr, bend left=10] \& \\
		\& \delta^1_{>1} \arrow[d]
		\arrow[dddddddr, bend left=10] \& \\
		\Delta \arrow[dd, Rightarrow]
		\arrow[dr, Rightarrow, bend right=10]
		\& h^1 \arrow[d] \arrow[ddddddr, bend left=10] \& \\
		\& \delta^1_{>1} \arrow[d]
		\arrow[dddddr, bend left=10] \& \\
		\vdots \arrow[ddr, Rightarrow, bend right=20]
		\& \vdots \arrow[d] \arrow[ddddr, bend left=10] \& \\
		\& h^1 \arrow[d] \arrow[dddr, bend left=10] \& \\
		\& \delta^1_{>1} \arrow[d]
		\arrow[ddr, bend left=10] \& \\
		\& #2 \arrow[dd] \arrow[dr, bend left=10] \& \\
		\& \& \mu_{\mathcal B} \arrow[d] \\
		\& #3 \& \mathcal B
	\end{tikzcd}%
}

\title{Cabling formula and colored knot Floer homology}
\author{Chen Zhang}
\address{Simons Center for Geometry and Physics}
\email{czhang@scgp.stonybrook.edu}
\begin{document}
\maketitle
\begin{abstract}
	We study colored knot Floer homology via a bordered cabling construction. We construct a cabling module and identify the connecting maps in the defining direct system at the level of this module. By taking the colimit at the chain level, we obtain a general formula for colored knot Floer homology of an arbitrary knot in terms of its ordinary knot Floer homology and the colored unknot algebra. This extends the computation of Alishahi--Gorsky--Liu from L-space knots to all knots.
\end{abstract}
\section{Introduction}
The behavior of knot Floer homology under cabling has been studied extensively since the early development of the theory. Hedden initiated the study of knot Floer homology under cabling, obtaining explicit computations for sufficiently twisted cables \cite{hedden2005knot,hedden2009knot}. Bordered Floer techniques have also been applied to the study of cabling; see, for example, Petkova \cite{petkova2013cables} and Hom \cite{hom2011note,hom2014bordered}. For multi-component cables, Gorsky and Hom studied cable links of $L$-space knots and gave explicit descriptions of their link Floer homology \cite{gorsky2017cable}. More recently, cabling formulas have also been studied using the immersed-curve formulation of bordered Floer homology \cite{hanselman2023cabling}.

Besides being a natural operation in knot theory, cabling also appears in several constructions connecting knot Floer homology with four-dimensional topology and with colored knot invariants. In \cite{chen2022floer}, Chen introduced Floer lasagna modules, an analogue of skein lasagna modules defined using link Floer homology. For $4$-manifolds obtained from $B^4$ by attaching $2$-handles, these modules admit a description in terms of the link Floer homology of cables of the attaching link, together with maps induced by link cobordisms. Thus computations of cable link Floer homology provide a
natural input for the study of Floer lasagna modules.

Cabling also plays a central role in recent constructions of colored versions of knot Floer homology. Cooper and DeYeso
\cite{cooper2025holonomicity} construct $S^r$-colored knot Floer homologies using a stabilization involving the knot Floer homology of highly twisted cables, and prove categorified recurrence relations for these invariants. More recently, Alishahi, Gorsky, and Liu \cite{alishahi2025colored} define $n$-colored knot Floer homology as the colimit of the link Floer homologies of the $(n,nm)$-cables $K_{n,nm}$ under cobordism maps corresponding to the addition of a full twist. Their construction uses the full version of link Floer homology and, in contrast with the one-component cables appearing in \cite{cooper2025holonomicity}, naturally involves $n$-component cable links.

\subsection{Cabling modules}
In this paper, we focus on the study of $(n,nm)$-cables. Viewing the $(n,nm)$-cabling pattern as a link in a solid torus, we use Zemke's bordered surgery package \cite{zemke2021bordered} to compute its type-$A$ module over the knot surgery algebra $\mathcal K$, denoted by ${}_{\mathcal K}\mathcal C_{n,m}$. Our approach is closely related to the satellite formula of Chen-Zemke-Zhou \cite{chen2024space}, which uses the bordered reinterpretation of the link surgery formula and its connected sum formula to compute the effect of satellite operations on
knot Floer homology.

Let $\mathcal X_0(K)^{\mathcal K}$ denote the type-$D$ module associated to the $0$-framed complement of $K$. Our first main result is a cabling formula of the following form.

\begin{theorem}\label{theorem:cabling-module-intro}
	For every knot $K\subset S^3$ and integers $n>0$ and $m$, there is an isomorphism
	\[
	H_*\left(\mathcal X_0(K)^{\mathcal K}\widehat{\boxtimes}{}_{\mathcal K}\mathcal C_{n,m}\right)\cong\HFL(K_{n,nm})
	\]
	as modules over $R_n=\mathbb F[\scrU_1,\scrV_1,\ldots,\scrU_n,\scrV_n]$. Moreover, the type-$A$ module ${}_{\mathcal K}\mathcal C_{n,m}$ admits an explicit description by generators, relations, and structure maps.
\end{theorem}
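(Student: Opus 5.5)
We prove Theorem~\ref{theorem:cabling-module-intro} by realizing $K_{n,nm}$ as a satellite and applying Zemke's bordered link surgery formula \cite{zemke2021bordered}, in the spirit of the Chen--Zemke--Zhou satellite formula \cite{chen2024space}. The first step is to present the $(n,nm)$-cable as a link in $S^3$ that has an extra surgered component. Let $P_{n,m}\subset S^1\times D^2$ denote the pattern: $n$ parallel longitudes carrying $m$ full twists. Embed the solid torus as the complement of an unknot $\mu$. Then $K_{n,nm}$ is obtained from $K\cup$(the $n$-component link $L=T(n,nm)$ with axis $\mu$) in the following way. We perform $0$-surgery on a meridian-type component and splice it with the $0$-framed complement of $K$, which is the usual connected-sum/splicing description of satellites. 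Using the splicing (connected sum) theorem of the bordered surgery package, the link surgery complex for $K_{n,nm}\subset S^3$ decomposes as $\mathcal X_0(K)^{\mathcal K}\widehat\boxtimes {}_{\mathcal K}\mathcal C_{n,m}$. Here ${}_{\mathcal K}\mathcal C_{n,m}$ is the type-$A$ module of the $2$-component-plus-pattern link $\mu\cup P_{n,m}$, with the surgery applied only to $\mu$. The homology statement, including the $R_n$-module structure, then follows from Zemke's surgery formula together with the fact that the $n$ pattern components are unsurgered, so they contribute the variables $\scrU_i,\scrV_i$.

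The second step is to make ${}_{\mathcal K}\mathcal C_{n,m}$ explicit. We model $\mu\cup P_{n,m}$ as $\mu\cup T(n,nm)$, viewed as a torus link on a Heegaard torus with $\mu$ as the core. We compute its full link Floer complex $\mathcal{CFL}$ over $\mathbb F[\scrU_0,\scrV_0,\scrU_1,\ldots,\scrV_n]$ from a genus-one (or small genus) Heegaard diagram in which all relevant disks are bigons or rectangles. This makes the complex combinatorially computable: the generators are indexed by intersection points on the torus, and the differentials are given by monomials. Next we determine the Alexander-grading-shifted inclusion maps $v,h$ for the $\mu$-component, where $h$ requires the flip map. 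These come from the standard large-surgery identifications, and for this torus-type link they can be read off from the Alexander grading filtrations because the complex is essentially "staircase-like" in the $\mu$ direction. Zemke's recipe then produces the $m_1,m_2$ actions of $\mathcal K$ (the idempotents $i_0,i_1$ and the elements $U,V,\sigma,\tau$), plus possible higher $m_j$ terms.

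The main obstacle is identifying the flip map and the higher $A_\infty$ operations precisely enough to get a closed description by generators and relations. We would first try to choose a diagram in which $\mathcal{CFL}(\mu\cup P_{n,m})$ is free with a basis on which the $\mu$-filtrations are split. Over a reduced model, homotopy transfer then gives vanishing $m_{\ge3}$ except for the standard $\tau$-$\sigma$ terms, as in the Chen--Zemke--Zhou computation of the cabling pattern. The flip map can be pinned down by the $\mathbb Z^{n+1}$-grading and equivariance, since there are few candidates in each grading. Finally, we would check the case $n=1$ against known cabling formulas as a consistency test, and we would verify the $m$-dependence by showing that adding a full twist corresponds to a framing change on the pattern, which multiplies the generators by an explicit grading shift.
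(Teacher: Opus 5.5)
Your first step is essentially the paper's setup, modulo some vagueness about which component is $0$-surgered and which carries the bordered structure. There is a $0$-surgered axis $\mu$, and the $n$ pattern components are retained in the idempotent $\mathbf I_0^{\otimes n}$, so Zemke's sublink theorem identifies the pairing with $\mathcal{CFL}(K_{n,nm})$. Finally, $\mathcal X_0(K)^{\mathcal K}$ is glued along a further meridian $\mu'$ of $\mu$.

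The genuine gap is in your second step, which is where the explicit description of ${}_{\mathcal K}\mathcal C_{n,m}$ has to come from. To turn $\mathcal{CFL}(\mu\cup T(n,nm))$ into a type-$D$ structure on $\mu$, you need the $\tau$-part of the surgery hypercube. That is the flip map around $\mu$ in the presence of the retained pattern components. You propose to fix it using the $\mathbb Z^{n+1}$-grading and $\phi^\tau$-semilinearity because ``there are few candidates.'' You give no argument that these constraints determine it up to homotopy. Unlike for a knot in $S^3$, the localized complexes here need not be free of rank one. Without this, the $\sigma$/$\tau$ data, and hence the module, is not pinned down.

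Three further claims are also unsupported: that a genus-one diagram for this $(n+1)$-component link has only bigon and rectangle domains, that the complex is staircase-like in the $\mu$-direction, and that homotopy transfer leaves no $m_{j\geq 3}$. Finally, you convert the $\mu$-data to type $A$ directly. It must first be paired with the $(0,0)$-framed Hopf bimodule joining $\mu$ to $\mu'$. Otherwise, pairing with $\mathcal X_0(K)^{\mathcal K}$ computes surgery on $K\#\mu$, which is a link in $S^3_0(K)$ rather than $K_{n,nm}\subset S^3$.

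The paper never computes a flip map. The exterior of the pattern is a star-shaped plumbing, so everything is an iterated connected sum of negative Hopf links. Zemke's connected sum formula then gives $C_{(n,0)}^{\mathcal K}\simeq\mathcal H_n^{\mathcal K}\widehat{\boxtimes}{}_{\mathcal K}\overline{\mathcal Z}_{(0,0)}^{\mathcal K}$, where $\mathcal H_n^{\mathcal K}$ is built by merging $n$ copies of $\mathbf I_0\cdot\overline{\mathcal Z}_{(0,0)}$. The only input, including all $\sigma$- and $\tau$-maps, is Zemke's already computed Hopf link bimodule.

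Two observations make this legitimate. First, restricting the pattern components to $\mathbf I_0$ removes the diagonal $\delta^1_3$ terms that distinguish the $\alpha\alpha$ and $\alpha\beta$ Hopf bimodules. Second, $\mathcal H_k^{\mathcal K}$ has no internal differential, so the extra $\delta^1_5$ of the pair-of-pants bimodule contributes nothing and the merge bimodule suffices.

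The resulting infinitely generated complex splits into zigzags. Homological perturbation either contracts each zigzag or reduces it to a single generator. Converting to type $A$ and reading off the coefficient relations gives the generators $x^\varepsilon_{k,i}$, their relations, and a structure with only $m_2$. The framing $m$ enters through $\mathcal D_m^{\mathcal K}$ as an extra factor $\tilde{\scrV}_2^m$ on $\tau$-outputs. In effect, $\mathcal H_n^{\mathcal K}$ is exactly the type-$D$ module of $\mu\cup P$ on $\mu$ (for $m=0$, with pattern components in $\mathbf I_0$) that your second step tries to compute by hand.
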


In Section~\ref{sec:cabling_computations}, we apply the cabling formula to compute $\HFL(K_{n,nm})$ for several companion knots. When the companion knot is an $L$-space knot, Gorsky and Hom \cite{gorsky2017cable} showed that sufficiently positive cable links are $L$-space links and gave an explicit description of their link Floer homology. Our bordered cabling formula recovers their result by directly computing the homology of the corresponding paired complex.

\begin{Remark}
	As mentioned above, the cabling modules constructed in this paper are also motivated by the study of Floer lasagna modules. At present, however, our description does not seem sufficiently flexible to give a general computation of these modules. One difficulty is that the $R_n$-modules arising from our cabling formula are in general not free. This makes it substantially harder to manipulate the resulting homology and to keep track of the coefficient actions. For example, in Chen's construction \cite{chen2022floer}, one works with the hat version of link Floer homology, which requires setting the variables $\scrU_i$ and $\scrV_i$ equal to zero; carrying this operation out directly from our non-free $R_n$-module description is not immediate.
	
	There is also additional cobordism information that is not addressed in the present paper. A computation of the Floer lasagna module also requires an understanding of the pair-of-pants, braid, and basepoint-moving cobordism maps. We expect to explore these maps in future work.
\end{Remark}

\subsection{Colored knot Floer homology}
In \cite{alishahi2025colored}, Alishahi, Gorsky, and Liu defined colored knot Floer homology using a direct system of cable links. Given a knot $K\subset S^3$ and an integer $n>0$, consider the family of $n$-component cables $K_{n,mn}$ for $m\geq0$ and their link Floer homology $\HFL(K_{n,mn})$. Let $M$ be an unknot bounding a disk $D$ that intersects every component of $K_{n,mn}$ positively at exactly one point, then attaching a $(-1)$-framed 2 handle along $M$ induces a cobordism 
\[W_m\colon (S^3,K_{n,mn})\longrightarrow(S^3,K_{n,(m+1)n}),\] 
see Figure \ref{fig:framingchangecob}. Fix the $\spin^c$ structure $\frs_0$ on $W_m$ satisfying $\langle c_1(\frs_0),[S^2]\rangle=1$, where $[S^2]$ is the generator of the second homology class of the cobordism. Let $\phi_0:=F_{W_m,\frs_0}$ denote the corresponding cobordism map. The \emph{n-colored knot Floer homology} of $K$ is defined as the colimit of the directed system
\begin{equation}\label{eq:coloreddef}
	\calH_n(K)=\underrightarrow{\mathrm{lim}}\left[\HFL(K_{n,0})\xrightarrow{\phi_0}\HFL(K_{n,n})\xrightarrow{\phi_0}\HFL(K_{n,2n})\rightarrow\cdots\right].
\end{equation}

\begin{figure}[ht]
\centering
\begin{tikzpicture}[scale=1.2, line cap=round]
\draw[thick] (-2,0) to[out=200,in=340] (2,0);
\draw (-1.5,1) to (-1.5,-0.2);
\draw (-0.6,1) to (-0.6,-0.3);
\draw (0.6,1) to (0.6,-0.3);
\draw (1.5,1) to (1.5,-0.2);
\draw (-1.5,-0.4) to (-1.5,-1);
\draw (-0.6,-0.5) to (-0.6,-1);
\draw (0.6,-0.5) to (0.6,-1);
\draw (1.5,-0.4) to (1.5,-1);
\node at (0,0) {$\dots$};
\node at (3,-0.1) {$(M,-1)$};
\end{tikzpicture}
\caption{The local $2$-handle attachment that adds a full twist.}
\label{fig:framingchangecob}
\end{figure}

Our cabling module allows us to compute the individual homology groups appearing in the direct system. To identify the connecting maps, we use another important feature of the link surgery complex: $2$-handle cobordism maps can be realized as inclusions of subcomplexes in a link surgery hypercube \cite{manolescu2010heegaard,ozsvath2008knot}. The original result studies the cobordism map between $3$-manifolds. Zemke's general surgery formula allows us to retain an unsurgered sublink throughout this construction. Using this framework, we extend the interpretation to $2$-handle cobordism maps for links; see Theorem \ref{theorem:linkcobsurgery}.

Moreover, the relevant subcomplex is determined by Alexander grading on the link surgery complex. In our case, this grading only depends on the cabling module and is independent of the companion knot input. Hence, we can interpret the corresponding cobordism map entirely as a map on the cabling module. More precisely, we construct type-$A$ maps, again denoted by $\phi_0$,
\[
\phi_0:{}_{\calK}\calC_{n,m}\longrightarrow{}_{\calK}\calC_{n,m+1}.
\]
which are independent of the companion knot, and whose pairing with $\mathcal X_0(K)^{\mathcal K}$ agrees with the $2$-handle cobordism map defining colored knot Floer homology.

\begin{theorem}\label{theorem:cobordism-pattern-intro}
	Under the identifications of Theorem~\ref{theorem:cabling-module-intro}, the map
	\[
	\widehat{\boxtimes}\phi_0:\mathcal X_0(K)^{\mathcal K}\widehat{\boxtimes}{}_{\mathcal K}\mathcal C_{n,m}\longrightarrow\mathcal X_0(K)^{\mathcal K}\widehat{\boxtimes}{}_{\mathcal K}\mathcal C_{n,m+1}
	\]
	induces the link Floer cobordism map
	\[
	\phi_0:\HFL(K_{n,nm})\longrightarrow\HFL(K_{n,n(m+1)}).
	\]
\end{theorem}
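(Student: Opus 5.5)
The plan is to realize the $2$-handle cobordism $W_m$ inside a link surgery hypercube and then push the resulting subcomplex inclusion through the bordered pairing. I would set up a link $L = K \cup P_{n,m}\cup M$ in $S^3$. Here $P_{n,m}$ is the $(n,nm)$-cable pattern, regarded as an unsurgered sublink, and $M$ is the meridional unknot of Figure~\ref{fig:framingchangecob}. Surgering $K$ with framing $0$ recovers the identification of Theorem~\ref{theorem:cabling-module-intro}. If we also surger $M$ with framing $-1$, blowing down $M$ adds a full twist and turns $P_{n,m}$ into $P_{n,m+1}$. By the link version of the surgery-cobordism correspondence (Theorem~\ref{theorem:linkcobsurgery}), $\phi_0=F_{W_m,\frs_0}$ is then identified, up to chain homotopy, with the inclusion of the subcomplex of the two-dimensional $M$-hypercube. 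This is the subcomplex consisting of the $M$-surgered vertex together with the part of the unsurgered vertex in the Alexander grading of $M$ that corresponds to $\frs_0$, i.e.\ $\langle c_1(\frs_0),[S^2]\rangle=1$. The inclusion maps into the complex computing $\HFL(K_{n,n(m+1)})$.

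Next I would localize everything away from $K$. The complement of $K$ contributes $\mathcal X_0(K)^{\mathcal K}$, and the solid torus containing $P_{n,m}\cup M$ contributes a type-$A$ module. Its $M$-surgered version should be homotopy equivalent to ${}_{\calK}\calC_{n,m+1}$, because blowing down $M$ inside the solid torus changes the pattern by a full twist. For the surgered unknot $M$ I would use Zemke's pairing theorem together with the solid-torus/unknot computations underlying ${}_{\calK}\calC_{n,m}$. The Alexander grading used to cut out the subcomplex is the grading of the $M$-component. Since $M$ lies in the pattern solid torus, this grading is defined on the pattern module alone; it does not depend on $K$ once the identification $[S^2]\leftrightarrow$ meridian class is fixed. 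Consequently the subcomplex inclusion equals $\mathrm{id}_{\mathcal X_0(K)}\boxtimes\iota$ for a type-$A$ inclusion $\iota$ of a truncated pattern module. I would then define $\phi_0$ as $\iota$ followed by the homotopy equivalence to ${}_{\calK}\calC_{n,m+1}$, and check that $\phi_0$ is a type-$A$ morphism, meaning that it commutes with all $m_j$ up to the appropriate $A_\infty$ homotopy terms. Pairing with $\widehat{\boxtimes}$ is functorial for $A_\infty$-module morphisms and homotopy equivalences, so $\mathrm{id}\widehat{\boxtimes}\phi_0$ agrees in homology with the cobordism map. The homotopy equivalences involved are exactly the ones used in Theorem~\ref{theorem:cabling-module-intro}, so this agreement holds under those identifications.

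I expect the main obstacle to be the middle step: identifying the $(-1)$-surgered $M$-pattern module with ${}_{\calK}\calC_{n,m+1}$ explicitly. This requires an explicit homotopy equivalence that respects the $R_n$-action and the Alexander gradings, so that the truncation matches $\frs_0$. My first attempt would be to do the blow-down at the level of the pattern's Heegaard diagram, via a handleslide of the cable strands over $M$. I would then compute the resulting map by homological perturbation on the $2$-dimensional $M$-cube, which reduces the mapping cone for the $-1$ surgery on the unknot to the twisted pattern. The remaining checks are that the higher $A_\infty$ terms of $\phi_0$ produced by the perturbation (the $h^1$, $\delta^1_{>1}$ chains) pair correctly, and that the grading shift matches $\frs_0$. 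Both are bookkeeping with the explicit structure maps of $\calC_{n,m}$.
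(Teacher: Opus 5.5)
Your overall architecture is the paper's. Theorem~\ref{theorem:linkcobsurgery} turns $F_{W_m,\frs_0}$ into the inclusion of a graded subcomplex, the grading cut is shown to live on the pattern side, and the inclusion is followed by a homotopy equivalence onto ${}_{\calK}\calC_{n,m+1}$. As written, however, two steps would misfire.

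\emph{The subcomplex.} It is not ``the $M$-surgered vertex together with part of the unsurgered vertex.'' By Theorems~\ref{theorem:cubecob} and~\ref{theorem:linkcobsurgery}, it is only the face on which the $M$-coordinate equals $1$, cut down to the single summand of $\bbH(L)/H(L_1,\Lambda_{L_1})$ labelled by $\frs_0$. No generator with $M$-coordinate $0$ enters. You need that single summand, because the whole face consists of infinitely many copies of the source complex.

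\emph{The label.} It is not the Alexander grading of $M$ alone, and ``$M$ lies in the solid torus'' is not why it is independent of $K$. You have to quotient by the framing classes of the surgered components. In the paper's plumbing, vertex $1$ is the $(-1)$-framed $M$, vertex $2$ is the framing vertex where $K$ is attached, and vertex $3$ is the $0$-framed central vertex. The framing class of vertex $3$ is the coordinate vector of vertex $2$. So $s_2$, the only coordinate the companion affects, dies, and the label is $s_1-s_3$. With only $K$ surgered, as your setup literally says, the source would be $S^3_0(K)$ and the $K$-coordinate would survive. Moreover, the cabling modules are built from negative Hopf links (Remark~\ref{remark:orientation-}). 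Hence the correct summand in this model is $\langle c_1,[S^2]\rangle=-1$, i.e.\ $s_1=s_3-1$, not $+1$.

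\emph{The main obstacle.} For the step you flag, no handleslide is needed. The paper blows up the framing vertex, trading framing $m+1$ for framing $m$ with a $(-1)$-framed leaf $M$. The connected-sum formula then gives
\[
{}_{\calK}\calC_{(n,m+1)}\cong\bigl(C_{(n,m)}^{\calK}\otimes(\calD_0^{\calK}\Boxtimes\AbZ_{(-1,0)})\bigr)\Boxtimes{}_{\calK|\calK}M^{\calK}\Boxtimes{}_{\calK|\calK}[\bbI^\Supset].
\]
The blow-down equivalence $\calD_1^{\calK}\simeq\calD_0^{\calK}\Boxtimes\AbZ_{(-1,0)}$ of \cite[Lemma~18.2]{zemke2021bordered} is a single zigzag cancellation, as in Lemma~\ref{lem:red}. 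The bottom row of that zigzag is exactly the $M$-coordinate-$1$ face.

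So $\phi_0$ does two things. It sends each generator $x$ of ${}_{\calK}\calC_{(n,m)}$ to the unique bottom-row generator $\scrV^{j}\bfd\otimes x$ in the $\frs_0$ summand. It then applies $\pi=\pi_r\circ G$. This gives explicit diagonal formulas: multiplication by $\tilde{\scrU}_2^{n-k}\tilde{U}_2^{\binom{n-k}{2}}$ on column $k$, and the identity on column $n$.

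Since ${}_{\calK}\calC_{(n,m)}$ has only $m_2$, your $A_\infty$ bookkeeping reduces to one compatibility with the framed $\tau$-action, $q_{k+1}=\scrV\,T(q_k)$. This is checked in Lemma~\ref{lem:colimitcompatibility}.
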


Consequently, the computation of colored knot Floer homology becomes an algebraic problem concerning the directed system of the complexes obtained from these type-$A$ modules. Alishahi-Gorsky-Liu computed the colored knot Floer homology for L-space knots. More precisely, they showed that if $K$ is an L-space knot, then its $n$-colored knot Floer homology is obtained from the ordinary knot Floer homology of $K$ by tensoring with the colored unknot algebra \cite[Theorem~1.17]{alishahi2025colored}:
\[
\mathcal H_n(K)\cong\HFL(K)\otimes_{\mathbb F[\scrU,\scrV]}\mathcal A_n^{\mathrm{col}},
\]
where the colored unknot algebra $\calA_n^\mathrm{col}$ is the commutative algebra generated by $\scrU_1,\ldots,\scrU_n,\scrV_1,\ldots,\scrV_n,A$ and relations 
\begin{equation}\label{eq:Arelation}
	\scrU_i=\mathsf A\prod_{j\neq i}\scrV_j,\;\scrU_i\scrV_i=\scrU_{i'}\scrV_{i'}\qquad i,i'=1,\ldots,n.
\end{equation}
The $\mathbb F[\scrU,\scrV]$-module structure on $\mathcal A_n^{\mathrm{col}}$ is induced by
\[
\scrU\longmapsto A,\qquad\scrV\longmapsto \scrV_1\cdots \scrV_n.
\]
Using the cabling module and the description of the connecting maps above, we remove the L-space assumption and obtain the same formula for an arbitrary knot.

\begin{theorem}\label{theorem:colored-general-intro}
	For any knot $K\subset S^3$, there is an isomorphism
	\[
	\mathcal H_n(K)\cong\HFL(K)\otimes_{\mathbb F[\scrU,\scrV]}\mathcal A_n^{\mathrm{col}}.
	\]
\end{theorem}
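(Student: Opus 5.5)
Since filtered colimits are exact, the plan is to compute at the chain level. By Theorems~\ref{theorem:cabling-module-intro} and \ref{theorem:cobordism-pattern-intro},
\[
\mathcal H_n(K)\cong \varinjlim_m H_*\bigl(\mathcal X_0(K)^{\mathcal K}\widehat{\boxtimes}{}_{\mathcal K}\mathcal C_{n,m}\bigr)\cong H_*\Bigl(\mathcal X_0(K)^{\mathcal K}\widehat{\boxtimes}\,\varinjlim_m {}_{\mathcal K}\mathcal C_{n,m}\Bigr).
\]
Taking homology commutes with direct limits over $\mathbb N$. The box tensor product also commutes with the colimit, provided the type-$D$ structure $\mathcal X_0(K)$ can be taken finitely generated, which is true after homotopy reduction. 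The problem therefore reduces to identifying the colimit type-$A$ module ${}_{\mathcal K}\mathcal C_{n,\infty}:=\varinjlim_m {}_{\mathcal K}\mathcal C_{n,m}$ along the maps $\phi_0$. This step is independent of $K$.

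Next I would compute $\mathcal C_{n,\infty}$ from the explicit generators and relations of $\mathcal C_{n,m}$. The expected behaviour is as follows. On the idempotent $i_0$, corresponding to the unsurgered part of the bordered pattern, the maps $\phi_0$ should be inclusions of Alexander-graded subcomplexes that exhaust everything in the limit. On the idempotent $i_1$, corresponding to the $0$-surgered part, the maps should eventually become isomorphisms, or the $i_1$-part should die in the limit, much as the large-surgery truncation stabilizes. Concretely, I expect ${}_{\mathcal K}\mathcal C_{n,\infty}$ to be quasi-isomorphic to a module of the form $\mathcal A_n^{\mathrm{col}}$, with $\scrU$ acting as $\mathsf A$ and $\scrV$ as $\scrV_1\cdots\scrV_n$. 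The higher actions $m_j$ involving $\sigma,\tau$ should become trivial or strictly unital-compatible in such a way that the pairing with any $\mathcal X_0(K)$ reduces to $\mathcal{CFK}(K)\otimes_{\mathbb F[\scrU,\scrV]}\mathcal A_n^{\mathrm{col}}$. A natural comparison is with the unknot pattern: pairing $\mathcal C_{n,\infty}$ with the identity/unknot type-$D$ module should recover $\mathcal H_n(\text{unknot})=\mathcal A_n^{\mathrm{col}}$, which is consistent with AGL's result.

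Finally, I would show that $\mathcal A_n^{\mathrm{col}}$ is flat over $\mathbb F[\scrU,\scrV]$. To do this I would exhibit it as a direct sum of free modules over $\mathbb F[\mathsf A,\scrV_1\cdots\scrV_n]$ using a monomial basis, since the relations express each $\scrU_i$ in terms of $\mathsf A$ and the $\scrV_j$. Flatness gives $H_*(\mathcal{CFK}(K)\otimes\mathcal A_n^{\mathrm{col}})\cong\HFL(K)\otimes\mathcal A_n^{\mathrm{col}}$. The identification $\mathcal X_0(K)\boxtimes(\text{module of }i_0\text{-type with trivial }\sigma,\tau\text{ actions})\simeq \mathcal{CFK}(K)\otimes(\cdot)$ comes from the fact that the $i_0$-idempotent part of $\mathcal X_0(K)$ is $\mathcal{CFK}(K)$.

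The main obstacle is the second step. It requires checking that the $i_1$-summands, and the higher $A_\infty$ operations involving $\sigma$ and $\tau$, genuinely disappear, or become null-homotopic, in the colimit rather than merely growing. To handle this, I would first filter by Alexander grading and show that any fixed generator in idempotent $i_1$ is mapped by $\phi_0^{k}$, for $k$ large, into the image of a homotopy that contracts it. This is analogous to the large-surgery argument, where the $\sigma/\tau$ maps become isomorphisms in a truncated range.
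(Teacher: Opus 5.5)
Your outer frame matches the paper. You take the colimit at the chain level and then use flatness, and your monomial-basis argument that $\mathcal A_n^{\mathrm{col}}\cong\mathbb F[\mathsf A,\scrV_1,\ldots,\scrV_n]$ is free over $\mathbb F[\mathsf A,\scrV_1\cdots\scrV_n]$ is fine. The central second step, however, rests on a wrong picture of the colimit and would fail.

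The maps $\phi_0$ are not inclusions that exhaust anything. The module ${}_{\calK}\calC_{n,m}$ has the same underlying module for every $m$: columns $M_k^0$ ($1\le k\le n$) in $\bfI_0$ and $M_k^1$ ($2\le k\le n$) in $\bfI_1$. On column $k$, in both idempotents, $\phi_0$ is multiplication by $q_k=\scrU^{n-k}U^{\binom{n-k}{2}}$, with $q_n=1$. So $\varinjlim_m{}_{\calK}\calC_{n,m}$ has columns $M_k^\varepsilon[q_k^{-1}]$, and all of them are nonzero.

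These modules have $m_1=0$. Any quasi-isomorphism must be an isomorphism on $m_1$-homology, which here is the entire underlying module in each idempotent. Hence the colimit cannot be equivalent to $\mathcal A_n^{\mathrm{col}}$ with trivial $\sigma,\tau$ actions. There is also no differential to support your proposed contracting homotopy on $\bfI_1$ generators. Your plan also never explains why the $\bfI_0$ columns with $k<n$ disappear, even though they carry homology (e.g.\ the torsion $T_k$ for the figure-eight knot).

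The missing idea is that the surplus columns cancel only after pairing, and the cancellation uses the knot side. First check that $q_{k+1}=\scrV\,T(q_k)$. Then the framed flip $\scrV^m\tau_k$ descends to a map $\overline\tau_k:A_k\to B_{k+1}$, where $A_k=(C\otimes_R M_k^0)[q_k^{-1}]$ and $B_{k+1}=(C^1\otimes_S M_{k+1}^1)[q_{k+1}^{-1}]$.

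This map is a chain homotopy equivalence because it combines two facts. One is the pattern isomorphism $M_k^0[q_k^{-1}]\cong M_{k+1}^1[q_{k+1}^{-1}]$, $x_k^0\mapsto x_{k+1}^1$. The other is that the localized flip $C[\scrU^{-1}]\to C[\scrV^{-1}]$ of $\mathcal X_0(K)$ is a homotopy equivalence. That second fact is a property of every knot-complement module, not of the pattern alone.

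Filter the complement of $A_n$ so that the successive quotients are the cones of $\overline\tau_r$. This shows the complement is contractible, leaving $A_n=C\otimes_R M_n^0$. Flatness of $M_n^0$, together with its identification with $\mathcal A_n^{\mathrm{col}}$ after the elliptic interchange, finishes the proof. So the $\bfI_0$ columns $k<n$ are not killed by $\phi_0$; they are cancelled against the $\bfI_1$ columns $k+1$, which is why a single column survives.
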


The definition of colored knot Floer homology in
\cite{alishahi2025colored} is inspired by $S^n$-colored Khovanov
homology. In the construction of Cooper--Krushkal
\cite{cooper2012categorification}, the color $n$ is represented by
the categorified Jones--Wenzl projector $P_n$, a chain complex of
Temperley--Lieb diagrams with differentials given by cobordisms.
The colored knot complex is obtained by inserting $P_n$ into the
$n$-parallel cable of the knot and evaluating the resulting closed
complex.

This construction has a similar flavor to our bordered approach:
both combine universal coloring or cabling data with knot-dependent
chain data. In the Khovanov setting, the projector is paired with
the complex of the cabled tangle by planar composition. In the
Floer setting, the cabling module is paired with the bordered
invariant of the knot complement by the box tensor product.

In our setting, the chain-level pairing admits a further
simplification. After passing to the full-twist colimit and canceling the additional
cabling summands, the remaining complex is obtained from the
uncolored knot Floer complex by changing the coefficient ring
from $\mathbb F[U,V]$ to $\mathcal A_n^{\mathrm{col}}$.
Flatness then allows this tensor-product description to pass to
homology, yielding the formula of Theorem~\ref{theorem:colored-general-intro}. Thus the colored knot Floer homology considered here is determined
by the uncolored knot Floer homology as a graded
$\mathbb F[\scrU,\scrV]$-module.

\subsection*{AI Disclosure} The author used the cabling formula in Section \ref{sec:cabling_computations} to compute colored knot Floer homology for knots whose knot Floer homology is formal, as well as to carry out an explicit computation for the left-handed trefoil. The strategy in that section is to first compute the homology of the individual terms in the direct system and then analyze the resulting colimit.

The general argument in Section \ref{sec:chaincolimit} was developed with assistance from ChatGPT~6. In particular, ChatGPT~6 suggested the key observation that, rather than first taking homology and then forming the colimit, one should first take the colimit at the chain level and then compute its homology. This observation led to the general proof presented in that section. All mathematical arguments and claims were subsequently checked and verified by the author.

\subsection*{Outline}The paper is organized as follows. Sections~\ref{sec:algebra} and~\ref{sec:linkfloer} review the algebraic and link Floer background. Section~\ref{sec:surgerycob} proves the cobordism-map interpretation, and Section~\ref{sec:bordered} recalls the bordered surgery formalism. Sections~\ref{sec:cabling} and~\ref{sec:coloredmodule} construct the cabling modules and full-twist maps. Section~\ref{sec:cabling_computations} gives the examples, and Section~\ref{sec:chaincolimit} develops the algebraic colimit calculation.

\section*{Acknowledgments} The author would like to thank Matt Hedden and Wenzhao Chen for helpful comments on an earlier draft, and John Pardon and Claudius Zibrowius for helpful discussions. The author also thanks Matt Hedden for bringing the paper on colored knot Floer homology to the author's attention. Part of this work was carried out during the author's postdoctoral appointment at Ruhr University Bochum, with support from ERC grant (project no. 101115822).

\section{Algebraic background}\label{sec:algebra}

In this section, we review the necessary algebra, including relevant background in $A_\infty$-modules, homological perturbation, and linear topological spaces.

\subsection{$A_\infty$-modules}\label{subsec:DAmodule}
We give a brief review of $A_\infty$-modules in this subsection. For a thorough treatment, see \cite{lipshitz2015bimodules,lipshitz2018bordered}.
\begin{defni}\label{def:Amodule}
	Suppose $\calA$ is an associative algebra over $\ve{k}$. A \emph{left $A_\infty$-module} (or, succinctly, \emph{left type $A$-module}) over $\mathcal{A}$, denoted by ${}_\mathcal{A} M$, is a left $\ve{k}$-module $M$ equipped with a family of $\ve{k}$-module maps 
	\[
	\{m_{j+1}: \mathcal{A}^{\otimes j} \otimes_{\ve{k}} M \to M\}_{j \ge 0}
	\]
	such that, for each $a_1, \ldots, a_n \in \mathcal{A}$ and $x \in M$,
	\begin{itemize}
		\item $m_1^2=0$,
		\item $m_2(a_1, m_1(x)) = m_1(m_2(a_1, x))$, and
		\item for each $n\ge 2$, 
		\[
		\sum_{j=0}^n m_{n-j+1}(a_n, \ldots, a_{j+1}, m_{j+1}(a_j, \ldots, a_1, x)) = \sum_{k=1}^{n-1}m_n(a_n, \ldots, a_{k+1}a_k, \ldots, a_1, x).
		\]
	\end{itemize}
	We call ${}_\mathcal{A} M$ \emph{strictly unital} if
	\begin{itemize}
		\item $m_2(1, x) = x$ for each $x\in M$, and
		\item for each $j \geq 2$, $m_{j+1}(a_j, \ldots, a_1, x) = 0$ whenever $a_i \in \ks$ for some $i=1,\ldots, j$.
	\end{itemize}
\end{defni}

\begin{defni}\label{def:typed}
	A \emph{right type-$D$ module} over $\mathcal{A}$, denoted by $N^\mathcal{A}$, is a right $\ve{k}$-module $N$ equipped with a $\ve{k}$-linear structure map
	\[
	\delta^1: N \to N \otimes_{\ve{k}} \mathcal{A}
	\]
	which satisfies
	\[
	(\mathrm{Id}_N \otimes \mu) \circ (\delta^1 \otimes \mathrm{Id}_\mathcal{A}) \circ \delta^1 = 0.
	\]
\end{defni}

Combining a left type-$A$ structure over $\mathcal A$ with a right type-$D$ structure over $\mathcal B$ gives a type-$DA$ bimodule.

\begin{defni}
	Suppose that $\calA$ and $\calB$ are algebras over $\ve{j}$ and $\ve{k}$, respectively. A \emph{type-$DA$ $(\mathcal{A}, \mathcal{B})$-bimodule}, denoted by ${}_\mathcal{A} M^\mathcal{B}$, consists of a $(\ve{j}, \ve{k})$-bimodule $M$ equipped with $(\ve{j}, \ve{k})$-linear maps
	\[
	\delta_{j+1}^1: \mathcal{A}^{\otimes j} \otimes_{\ve{j}} M \to M \otimes_{\ve{k}} \mathcal{B}
	\]
	for each $j\ge 0$ which satisfies
	\begin{align*}
		\sum_{j=0}^n \big( (\mathrm{Id}_M \otimes \mu_\mathcal{B}) & \circ (\delta_{n-j+1}^1 \otimes \mathrm{Id}_\mathcal{B}) \big) (a_n, \ldots, a_{j+1}, \delta_{j+1}^1(a_j, \ldots, a_1, x))
		\\ & =
		\sum_{k=1}^{n-1}\delta_n^1(a_n, \ldots, \mu_\mathcal{A}(a_{k+1}, a_k), \ldots, a_1, x).
	\end{align*}
\end{defni}
We also recall two types of tensor products that will be used to combine modules.

\emph{External tensor product:} Let $\calA$ and $\calB$ be two algebras over rings $\bfi$ and $\bfj$ of characteristic 2, and let $N^\calA=(N,\delta^1)$ and $M^\calB=(M,d^1)$ be type-$D$ modules. Then the \emph{external tensor product}, denoted $(N\otimes_\bbF M)^{\calA\otimes_\bbF\calB}$, is the module $N\otimes_\bbF M$ equipped with the  type-$D$ structure map $\mathrm{id}_N\otimes1_\calA\otimes d^1+\delta^1\otimes\mathrm{id}_M\otimes1_\calB$.

\emph{Box tensor product:} Given a right type-$D$ module $N^\mathcal{A}$ and a left type-$A$ module  ${}_{\mathcal{A}} M$, Lipshitz-Ozsv\'ath-Thurston \cite{lipshitz2018bordered} provide an associated chain complex. The \emph{box tensor product} $N^\mathcal{A} \boxtimes {}_{\mathcal{A}} M$ is the chain complex $(N \otimes_{\ve{k}} M, \partial^\boxtimes)$ defined as follows. First, using the structure map $\delta^1$ from $N^\mathcal{A}$, define the map $\delta^j: N \to N \otimes \mathcal{A}^j$ inductively, setting $\delta^0 = \mathrm{Id}_N$ and 
\[
\delta^j = (\delta^1 \otimes \mathrm{Id}_{\mathcal{A}^\otimes{(j-1)}}) \circ\delta^{j-1}.
\]
Then, $\partial^\boxtimes$ is defined by the formula
\[
\partial^\boxtimes(x \otimes y) = \sum_{j=0}^\infty (\mathrm{Id}_N \otimes m_{j+1})(\delta^j(x), y).
\]
The map $\partial^\boxtimes$ is a differential assuming that one of $M$ or $N$ satisfies a boundedness assumption; see \cite{lipshitz2015bimodules}. 

The box tensor extends naturally to bimodules; for example, one may form $AD \boxtimes AD$, $AD \boxtimes A$, and $D \boxtimes AD$. In this paper, one input for the box tensor will always be a type-$D$ module. As a result, the corresponding differentials have the same general form as in the $D \boxtimes A$ case and can be described schematically as follows.

\[
\begin{aligned}
	\partial^\boxtimes (x \otimes y)
	&=
	\begin{tikzcd}[row sep=small, column sep=small]
		x \arrow[d] & y \arrow[dddd]
		\\
		\delta^1 \arrow[d] \arrow[dddr] &
		\\
		\vdots \arrow[d] \arrow[ddr, Rightarrow] &
		\\
		\delta^1 \arrow[dd] \arrow[dr] &
		\\
		& m_* \arrow[d]
		\\
		\phantom{{}={}} & \phantom{{}={}}
	\end{tikzcd}
	&
	\qquad\qquad
	\partial^{D \boxtimes AD}
	&=
	\begin{tikzcd}[row sep=small, column sep=small]
		x \arrow[d] & y \arrow[dddd] &
		\\
		\delta^1 \arrow[d] \arrow[dddr] & &
		\\
		\vdots \arrow[d] \arrow[ddr, Rightarrow] & &
		\\
		\delta^1 \arrow[dd] \arrow[dr] & &
		\\
		& \delta^1 \arrow[d] \arrow[dr] &
		\\
		\phantom{{}={}} & \phantom{{}={}} & \phantom{{}={}}
	\end{tikzcd}
\end{aligned}
\]

\subsection{Hypercubes}
We now describe the algebraic structure for the link surgery formula, which is called a {\em hypercube of chain complexes} in \cite{manolescu2010heegaard}.

Define $$ \bbE_n =   \{0,1\}^n$$ as the set of vertices of the $n$-dimensional unit hypercube. If $\varepsilon$, ${\varepsilon}^{'}\in E_n$, we write $\varepsilon\leq{\varepsilon}^{'}$ if the inequality holds for each coordinate of $\varepsilon$ and ${\varepsilon}^{'}$.

\begin{defni}
	An {\em $n$-dimensional hypercube of chain complexes} consists of a collection of vector spaces 
	$(C^{\varepsilon})_{\varepsilon \in \bbE_n},$ 
	together with a collection of linear maps for each pair of indices $\varepsilon$, ${\varepsilon}^{'}\in E_n$ such that $\varepsilon\leq{\varepsilon}^{'}$
	$$ D_{\varepsilon,{\varepsilon}^{'}}  : C^{\varepsilon} \to C^{{\varepsilon}^{'}},$$
	The maps are required to satisfy the relations whenever $\varepsilon\leq{\varepsilon}^{''}$
	\[
	\sum_{ \varepsilon\leq{\varepsilon}^{'}\leq{\varepsilon}^{''}}  D_{{\varepsilon}^{'},{\varepsilon}^{''}} \circ D_{\varepsilon,{\varepsilon}^{'}}   = 0.
    \]
\end{defni}

This construction can be generalized to define hypercubes of type-$DA$ bimodules.

\begin{defni}
	A \emph{hypercube of $DA$-bimodules} consists of a $DA$-bimodule $(C,\delta_*^1)$, such that $C$ decomposes as $C=\bigoplus_{\varepsilon\in \bbE_n}C_\varepsilon$, where each $C_\varepsilon$ is itself a $(\js,\ks)$-module. Furthermore, the structure map $\delta_{j+1}^1$ decomposes as a sum of maps
	\[\delta_{j+1,\varepsilon,\varepsilon'}^1:\calA^{\otimes j}\otimes C_\varepsilon\rightarrow C_{\varepsilon'}\otimes \calB,\]
	where $j\geq0$, and $\varepsilon\leq\varepsilon'$.
\end{defni}

\subsection{Homological perturbation}

In this subsection, we review the homological perturbation lemma used to simplify the modules below, following Zemke \cite{zemke2021bordered}.

\begin{lem}
	\label{lem:hompert2}
	Suppose that $\mathcal{A}$ and $\mathcal{B}$ are associative algebras and ${}_\mathcal{A}M^\mathcal{B}$ is a strictly unital type-$DA$ bimodule. Let $Z^\mathcal{B}$ be a type-$D$ module over $\mathcal{B}$ and consider morphisms of type-$D$ modules over $\mathcal{B}$ which fit into the diagram
	\[
	\begin{tikzcd}
		M^\mathcal{B} \arrow[loop above, "h^1"] \arrow[r, bend left=20, "\pi^1"] & Z^\mathcal{B} \arrow[l, bend left=20, "i^1"]
	\end{tikzcd}
	\]
	Moreover, we assume that $i^1$ and $\pi^1$ are homomorphisms of type-$D$ modules and that $i^1\circ \pi^1$ is a homotopy equivalence of type-$D$ modules via homotopy $h$, in the sense that 
	$i^1 \circ \pi^1 = \mathrm{Id}_M + \partial_\mathrm{Mor}(h^1),$
	which, in addition, satisfies
    \[
       \pi^1\circ i^1=\mathrm{Id}_Z,\qquad
       h^1\circ i^1=0,\qquad
       \pi^1\circ h^1=0,\qquad
       (h^1)\circ h^1=0.
    \]
	Then there is an induced $DA$-bimodule structure on ${}_\mathcal{A} Z^\mathcal{B}$ extending $Z^\mathcal{B}$ such that the maps above extend to $DA$-bimodule morphisms which satisfy all of the above relations as $DA$-bimodule morphisms.
\end{lem}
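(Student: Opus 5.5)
We build the induced $DA$-structure on $Z$ by the standard homological perturbation recipe, treating the $DA$-bimodule ${}_\calA M^\calB$ as a type-$D$ module $M^\calB$ with extra operations $\delta^1_{j+1}$, $j\ge1$. Write $\delta^1_{>1}$ for the collection of structure maps with at least one algebra input. The new structure maps on $Z$ are
\[
\tilde\delta^1_{j+1}(a_j,\dots,a_1,z)=\sum \pi^1\circ\delta^1_{>1}\circ h^1\circ\delta^1_{>1}\circ\cdots\circ h^1\circ\delta^1_{>1}\circ i^1 ,
\]
summed over all ways of splitting the sequence $a_j,\dots,a_1$ into consecutive nonempty blocks. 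Each block is fed into one copy of $\delta^1_{>1}$, and the copies are interleaved with $h^1$. The outputs in $\calB$ are multiplied in order by $\mu_\calB$. This is exactly the picture drawn by the macro DAperturbdiagram, with $i^1$ at the top and $\pi^1$ at the bottom. We set $\tilde\delta^1_1$ equal to the original type-$D$ structure of $Z$. The extended morphisms are defined the same way:
\begin{itemize}
\item $\tilde i^1$ is $h^1\circ\delta^1_{>1}\circ\cdots\circ h^1\circ\delta^1_{>1}\circ i^1$, preceded by the $j=0$ term $i^1$;
\item $\tilde\pi^1$ is $\pi^1\circ\delta^1_{>1}\circ h^1\circ\cdots\circ h^1$, preceded by the term $\pi^1$;
\item $\tilde h^1$ is $h^1\circ\delta^1_{>1}\circ h^1\circ\cdots\circ h^1$.
\end{itemize}
All sums are finite for a fixed number of algebra inputs, since every $\delta^1_{>1}$ consumes at least one input. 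So no boundedness hypothesis is needed.

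The key steps are as follows.

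\textbf{$DA$ relations for $\tilde\delta^1$.} Expand the $DA$ structure relation on $M$ for the inputs in each block. The terms of $\tilde\delta\circ\tilde\delta$, plus the terms with $\mu_\calA$ applied to adjacent algebra inputs, then regroup into three kinds:
\begin{itemize}
\item terms containing $\delta^1_1\circ h^1+h^1\circ\delta^1_1$, i.e.\ $\partial_{\mathrm{Mor}}(h^1)$, which we replace by $i^1\pi^1+\mathrm{Id}$;
\item boundary terms involving $\pi^1\delta^1_1$ or $\delta^1_1 i^1$, which we replace using that $i^1,\pi^1$ are type-$D$ homomorphisms;
\item terms where two $\delta^1_{>1}$ meet with no $h^1$ in between.
\end{itemize}
Substituting $i^1\pi^1+\mathrm{Id}$ does two things. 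The $\mathrm{Id}$ part produces exactly the terms where two $\delta_{>1}$'s meet directly, which cancel the third kind. The $i^1\pi^1$ part splits a chain into $\tilde\delta\circ\tilde\delta$. This is the standard telescoping.

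\textbf{Morphism relations.} The same bookkeeping shows that $\tilde i^1$ and $\tilde\pi^1$ are $DA$-homomorphisms and that $\tilde i\circ\tilde\pi=\mathrm{Id}+\partial_{\mathrm{Mor}}(\tilde h)$.

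\textbf{Side conditions.} The relations $\tilde\pi\tilde i=\mathrm{Id}$, $\tilde h\tilde i=0$, $\tilde\pi\tilde h=0$ and $\tilde h\tilde h=0$ follow formally. Every nontrivial composite contains one of $h^1 i^1$, $\pi^1 h^1$ or $h^1h^1$ at a junction, and these vanish. The only surviving term of $\tilde\pi\tilde i$ is $\pi^1 i^1=\mathrm{Id}_Z$.

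\textbf{Strict unitality.} If some $a_i\in\ks$, then in any splitting some block containing $a_i$ has length at least $2$ or is a singleton. A block of length at least $2$ kills the term because $M$ is strictly unital. For a singleton block, $\delta^1_2(1,-)$ is the identity with output $1$, so it contributes $h^1$ adjacent to $h^1$, $i^1$ or $\pi^1$, and the term vanishes by the side conditions. Hence $\tilde\delta^1_2(1,z)=\pi^1 i^1(z)=z$, and all the higher operations with a unit input vanish.

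The main obstacle is the sign-free but combinatorially delicate verification of the $DA$ relation in the first step: the terms where $\mu_\calA$ merges $a_{k+1}a_k$ across a block boundary versus inside a block must be matched correctly. I would handle this by organizing terms by the position of the unique $\delta^1_1$ or $\mu_\calA$ and checking the local identity at each junction, or alternatively cite Zemke's proof in \cite{zemke2021bordered} for this step, since the setup is identical.
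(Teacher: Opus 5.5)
Your proposal is correct and matches the paper. The paper does not prove this lemma itself: it imports it from Zemke and records exactly your perturbation formulas for $\delta^{1,Z}_{>1}$, $I^1_*$, $\Pi^1_*$ and $H^1_*$ in the diagrams of Equation~\eqref{eq:DAperturb}. Your telescoping check of the $DA$ relation, together with your verification of the side conditions and of strict unitality, is the standard argument behind that citation. One simplification: $\mu_{\mathcal A}$-merges across block boundaries never occur on either side of the relation, so the matching you flagged as delicate is automatic.
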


The type-$DA$ perturbation lemma gives explicit formulas for the transferred structure maps and morphisms. Schematically, these are
\begin{equation}\label{eq:DAperturb}
	\adjustbox{max width=\textwidth}{$\displaystyle
		\delta^{1,Z}_{>1}(a,x)
		=\DAperturbdiagram{i^1}{\pi^1}{Z}
		\qquad
		I^1_*(a,x)
		=\DAperturbdiagram{i^1}{h^1}{M}
		\qquad
		\Pi^1_*(a,x)
		=\DAperturbdiagram{h^1}{\pi^1}{Z}
		\qquad
		H^1_*(a,x)
		=\DAperturbdiagram{h^1}{h^1}{M}
		$}
\end{equation}
Here $a$ denotes the ordered left-algebra inputs, and $\delta^1_{>1}$ denotes the original structure maps with at least one left-algebra input. Each rightward arrow records an output in $\mathcal B$; these outputs are multiplied in order at $\mu_{\mathcal B}$.

There is also a version of the homological perturbation lemma for hypercubes of type-$DA$ bimodules.

\begin{lem}
	Suppose that $\calA$ and $\calB$ are associative algebras. Suppose also that $_\calA \calC^\calB=(_\calA C_\varepsilon^\calB,D_{*,\varepsilon,\varepsilon'}^1)_{\varepsilon\in\bbE_n}$ is a hypercube of $DA$-bimodules, and suppose that for each $\varepsilon\in\bbE_n$ we have chosen type-$D$ modules $(Z_\varepsilon^\calB)_{\varepsilon\in\bbE_n}$, as well as morphisms of type-$D$ modules
	\[\pi_\varepsilon^1:C_\varepsilon^\calB\rightarrow Z_\varepsilon^\calB, \quad i_\varepsilon^1:Z_\varepsilon^\calB\rightarrow C_\varepsilon^\calB, \quad \mathrm{and} \quad h_\varepsilon^1:C_\varepsilon^\calB\rightarrow C_\varepsilon^\calB\]
	satisfying
   \[\pi_\varepsilon^1\circ i_\varepsilon^1=\mathrm{Id}_{Z_\varepsilon}, \quad i_\varepsilon^1\circ\pi_\varepsilon^1+\mathrm{Id}_{C_\varepsilon}=\partial_{\mathrm{Mor}}(h_\varepsilon^1), \quad h_\varepsilon^1\circ h_\varepsilon^1=0, \quad h_\varepsilon^1\circ i_\varepsilon^1=0, \quad \mathrm{and} \quad \pi_\varepsilon^1\circ h_\varepsilon^1=0,\]
   and such that $\pi_\varepsilon^1$ and $i_\varepsilon^1$ are homomorphisms of type-$D$ modules. Such data determines structure maps $\delta_{j+1,\varepsilon,\varepsilon'}^1:\calA^{\otimes j}\otimes Z_\varepsilon\rightarrow Z_{\varepsilon'}\otimes\calB$, which make $_\calA \calZ^\calB=(_\calA\calZ_\varepsilon^\calB,\delta_{*,\varepsilon,\varepsilon'}^1)$ a hypercube of $DA$-bimodules. Furthermore, there are morphisms of hypercubes of $DA$-bimodules
   \[\Pi_*^1:_\calA \calC^\calB\rightarrow _\calA\calZ^\calB \quad I_*^1:_\calA \calZ^\calB\rightarrow _\calA\calC^\calB \quad \mathrm{and} \quad H_*^1:_\calA \calC^\calB\rightarrow _\calA\calC^\calB,\]
   which satisfy analogous relations to $i_\varepsilon^1,\pi_\varepsilon^1$ and $h_\varepsilon^1$.
\end{lem}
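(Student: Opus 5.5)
The plan is to reduce the hypercube statement to the single-bimodule perturbation lemma (Lemma~\ref{lem:hompert2}) by forgetting the cube grading, and then check that the transferred structure respects that grading. Set $C=\bigoplus_{\varepsilon}C_\varepsilon$ and $Z=\bigoplus_\varepsilon Z_\varepsilon$, and take $\pi^1=\bigoplus_\varepsilon\pi^1_\varepsilon$, $i^1=\bigoplus_\varepsilon i^1_\varepsilon$ and $h^1=\bigoplus_\varepsilon h^1_\varepsilon$.

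The first subtlety is that the type-$D$ structure on $C^{\calB}$ includes the cube-increasing components $D^1_{1,\varepsilon,\varepsilon'}$ with $\varepsilon<\varepsilon'$. So $i^1$ and $\pi^1$ are not homomorphisms for the full differential, and the diagonal data only form a deformation retraction for the internal differentials $\delta^1_{1,\varepsilon,\varepsilon}$. I therefore split $\delta^1_1=\delta^1_{1,\mathrm{diag}}+\delta^1_{1,>}$. The part $\delta^1_{1,>}$ is treated as a perturbation alongside the higher maps $\delta^1_{j+1}$ with $j\ge1$.

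Concretely, I use the perturbation formulas \eqref{eq:DAperturb}, but in each chain $\delta^1_{>1},h^1,\delta^1_{>1},\dots$ I now allow the pieces $\delta^1_{1,>}$ to appear as well, with no algebra input. This gives:
\begin{itemize}
\item the transferred maps $\delta^1_{j+1,\varepsilon,\varepsilon'}$ on $Z$;
\item the morphisms $\Pi^1_*$ and $I^1_*$;
\item the homotopy $H^1_*$.
\end{itemize}

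Next I check that these formulas make sense. Every perturbation term either consumes an algebra input or strictly increases $\varepsilon$, and $h^1$ preserves $\varepsilon$. Since there are only finitely many inputs and the cube $\bbE_n$ is finite, each composite chain has length at most $j+n$. So the sums are finite, and no boundedness or filtration hypothesis is needed. The same observation shows that every transferred map only goes from $\varepsilon$ to some $\varepsilon'\ge\varepsilon$, so the result is again a hypercube of $DA$-bimodules. The diagonal part with $j=0$ is just the given $Z_\varepsilon^\calB$.

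Finally I verify the $DA$ structure relations and the relations
\[
\Pi\circ I=\mathrm{Id},\qquad I\circ\Pi+\mathrm{Id}=\partial_{\mathrm{Mor}}(H),\qquad H\circ H=0,\qquad H\circ I=0,\qquad \Pi\circ H=0.
\]
This is the standard homological perturbation argument. I expand the composites, use $i\pi+\mathrm{Id}=\partial h$ to telescope adjacent terms, and apply the side conditions $h h=0$, $h i=0$ and $\pi h=0$ to kill the cross terms. I also use the structure relation of $C$, where the algebra multiplication $\mu_\calA$ and the $\delta^1_{1,>}$-terms combine. This is exactly the verification in Lemma~\ref{lem:hompert2}, with the perturbation enlarged to include $\delta^1_{1,>}$. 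Strict unitality passes to $Z$ because $h$, $i$ and $\pi$ take no algebra inputs.

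The main obstacle is this middle bookkeeping step. One has to confirm that including the length-zero perturbation $\delta^1_{1,>}$ does not break the identities proved for $\delta^1_{>1}$ alone. The cleanest route is to regard $C$ as a $DA$-bimodule whose underlying type-$D$ differential is only $\delta^1_{1,\mathrm{diag}}$, perturbed by $\delta^1_{1,>}+\delta^1_{>1}$, and then quote the general perturbation lemma for a nilpotent perturbation. Nilpotence is guaranteed by the finiteness of $\bbE_n$ together with the grading by the number of algebra inputs.
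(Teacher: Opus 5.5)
Your proposal is correct and follows essentially the same route as the paper. Following Zemke, the paper obtains the transferred maps from the perturbation formulas \eqref{eq:DAperturb} by allowing, in place of each $\delta^1_{>1}$, either a cube-increasing hypercube map $D^1_{j+1,\varepsilon,\varepsilon'}$ with $j\geq 0$ or an internal map $D^1_{>1}$ of some $C_\varepsilon$; this is exactly your splitting $\delta^1_1=\delta^1_{1,\mathrm{diag}}+\delta^1_{1,>}$, with finiteness coming from $\bbE_n$ and the number of algebra inputs.

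One small correction: strict unitality of the transferred structure does not follow merely from $h^1$, $i^1$, $\pi^1$ taking no algebra inputs. It also needs the side conditions $h^1\circ i^1=0$, $\pi^1\circ h^1=0$ and $h^1\circ h^1=0$, together with unitality of the hypercube maps.
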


The induced maps again have concrete descriptions; here we focus on the structure map $\delta_{j+1,\varepsilon,\varepsilon'}^1$. This map is defined by modifying Equation \ref{eq:DAperturb} as follows. In place of each $\delta_{>1}^1$ labeled therein, we are allowed one of two maps:
\begin{enumerate}
	\item A hypercube map $D_{j+1,\varepsilon,\varepsilon'}^1$ for $\calC$, such that $\varepsilon<\varepsilon'$. Furthermore, we assume only that $j\geq0$.
	\item An internal structure map $D_{>1,\varepsilon,\varepsilon'}^1$ of some $_\calA C_\varepsilon^\calB$.
\end{enumerate}

\subsection{Linear topological spaces}\label{subsec:lineartopology}
To ensure that the chain homotopy equivalence in the link surgery formula holds, one needs to take completion on both sides. In \cite{ozsvath2008knot} and \cite{manolescu2010heegaard}, Manolescu, Ozsv\'{a}th, and Szab\'{o} achieved it by using direct product instead of direct sum. In \cite{zemke2021bordered} and \cite{zemke2023general}, Zemke discussed various linear topologies and took completions with respect to them. We review the necessary algebra background in this subsection.

\begin{defni}\label{def:lineartopology}
	A \emph{linear topological vector space} consists of a vector space $X$ equipped with a topology which admits a basis of open sets centered at 0 consisting of vector subspaces, such that the addition map $X\times X\rightarrow X$ is continuous.
\end{defni}

Given a vector space $X$, a linear topology can be specified by picking a decreasing filtration consisting of subspaces $X_i\subseteq X$ indexed by $i\in I$, where $I$ is some directed and partially ordered set. The corresponding basis consists of the sets of the form $x+X_i$, where $i\in I$ and $x\in X$.

The \emph{completion} of a linear topological vector space is given by the inverse limit
\[\bfX:=\lim_{\substack{ \leftarrow \\ i\in I}}X/X_i.\]

Note that, if $X$ and $Y$ are linear topological vector spaces, then a continuous map $f:X\rightarrow Y$ naturally induces a map on completions.

Given two linear topological vector spaces $X$ and $Y$, there are a few different ways to topologize the tensor product $X\otimes Y$.
\begin{defni}\label{def:tensortop}
	Suppose that $X$ and $Y$ are linear topological spaces.
	\begin{enumerate}
		\item \emph{standard topology} $X\otimes^!Y$: A subspace $E\subseteq X\otimes^!Y$ is open if and only if the following holds: There are open subspaces $U\subseteq X$ and $V\subseteq Y$ such that $U\otimes Y\subseteq E$ and $X\otimes V\subseteq E$.
		\item \emph{chiral-topology} $X\vec{\otimes}Y$: A subspace $E\subseteq X\vec{\otimes}Y$ is open if and only if the following holds: There is an open subspace $U\subseteq X$ such that $U\otimes Y\subseteq E$: and for each $x\in X$, there is an open $V_x\subseteq Y$ such that $x\otimes V_x\subseteq E$. 
	\end{enumerate}
\end{defni}

One important topology used in this paper is the cofinite basis topology.

\begin{defni}
	Let $(X_i)_{i\in I}$ be a family of $\bbF$-vector spaces, then the \emph{cofinite basis topology} on $\calX=\oplus_{i\in I}X_i$ is induced by the fundamental basis of opens, where $S$ are finite subsets of $I$,
	\[\calX_{\mathrm{co}(S)}=\bigoplus_{i\notin S}X_i.\]
\end{defni}
Note that the completion $\pmb{\calX}$ of $\calX$ with respect to the cofinite basis topology is the direct product
\[\pmb{\calX}\cong \prod_{i\in I}X_i.\]

Given two topologized spaces $\calX$ and $\calY$, each equipped with the cofinite basis topology, the standard topology $\calX\otimes^{!}\calY$ is equivalent to the cofinite basis topology on the tensor product.

\section{Link Floer complex and 2-handle cobordism map}\label{sec:linkfloer}
\subsection{Link Floer complex}
In this subsection, we review the ``full'' link Floer complex with the conventions of \cite{zemke2019link} and assume familiarity with multi-based Heegaard diagrams. Let $\bbL=(L,\ws,\zs)$ be an oriented multi-based link in a closed oriented $3$-manifold $Y$, with the $\ws$- and $\zs$-basepoints alternating along each component. Throughout this paper, we work with links $L=K_1\cup\cdots\cup K_l$ for which each component $K_i$ has exactly one pair of basepoints $w_i,z_i$. We call these \emph{based inks}.

Let $\calH=(\Sigma,\alphas,\betas,\ws,\zs)$ be a Heegaard diagram for $(Y,\bbL)$, and let $g$ be the genus of $\Sigma$. We use meridianal Heegaard diagrams as defined below.

\begin{defni}
	A Heegaard link diagram $(\Sigma,\alphas,\betas,\ws,\zs)$ is a \emph{meridianal link diagram} if for each component $K_i$ of $L$, there is a special beta curve $\beta_{0,i}$ such that $w_i$ and $z_i$ are on opposite sides of $\beta_{0,i}$, as in Figure \ref{fig:meridianalHD}.
\end{defni}

\begin{figure}[h]
	\centering
	\begin{tikzpicture}[scale=1.2, line cap=round]
		
		\draw[thick] (-2,0) to[out=85,in=95] (2,0);
		\draw[thick] (-1,0) to[out=50,in=130] (1,0);
		\draw[blue] (0,0.46) to[out=70,in=280] (0,1.15);
		\draw[red] (-1.5,0) to[out=50,in=130] (1.5,0);
		\node[blue,font=\small] at (0.25,1) {$\beta_0$};
		\node[red,font=\small] at (-1, 0.55) {$\alpha$};
		\node[right,font=\small] at (0,0.8) {$z$};
		\node[left,font=\small] at (0.1,0.8) {$w$};
		
	\end{tikzpicture}
	\caption{A local picture for the meridianal link diagram}
	\label{fig:meridianalHD}
\end{figure}
	
Given a pair $(Y,\bbL)$ with link Heegaard diagram $\calH=(\Sigma, \alphas , \betas, \ws, \zs)$, it determines tori
$$\Ta = \alpha_1 \times \cdots \times \alpha_{g+l-1}, \ \Tb=\beta_1 \times \cdots \times \beta_{g+l-1} \subset \Sym^{g+l-1}(\Sigma).$$ The \emph{link Floer complex} $\CFL(Y,L)$ is freely generated by the intersection point $\bfx\in \Ta \cap \Tb$ over the polynomial ring $\bbF[\scrU_1,\scrV_1,\ldots,\scrU_l,\scrV_l]$. The differential is given by counting pseudo-holomorphic disks via:
\begin{equation*}
	\label{def:DefHFLm}
	\partial \bfx = \sum_{\bfy \in \Ta \cap \Tb} \sum_{\substack{\phi \in \pi_2(\bfx, \bfy)\\ \mu(\phi)=1 }} \# (\calM(\phi)/\bbR) \scrU_{1}^{n_{w_1}(\phi)} \cdots   \scrU_{l}^{n_{w_l}(\phi)}\scrV_{1}^{n_{z_1}(\phi)} \cdots   \scrV_{l}^{n_{z_l}(\phi)}\bfy, 
\end{equation*}
where $\pi_2(\bfx, \bfy)$ is the set of homotopy classes of Whitney disks from $\bfx$ to $\bfy$ relative to $\Ta$ and $\Tb$, as in \cite{ozsvath2004holomorphic}. For each homotopy class of disks $\phi \in \pi_2(\bfx, \bfy)$, we denote by $n_{w_j}(\phi)$ and $n_{z_j} (\phi) \in \bbZ$ the multiplicity of $w_j$ (resp. $z_j$) in the domain of $\phi$. Furthermore, we let $\mu(\phi)$ be the Maslov index of $\phi$. The corresponding homology is denoted by $\HFL(Y,L)$.

In this paper, we work with links $L$ in $S^3$, and we denote the link Floer complex and its homology by $\CFL(L)$ and $\HFL(L)$ respectively. Each generator $\xs$ of the link Floer complex is bigraded by the Maslov grading $\gr_{\ps}$, which depends on a choice of collection of basepoints $\ps$, and a relative Alexander multi-grading $A=(A_1,\cdots,A_l)$, which takes value in the following set:
	
$$ \bbH(L)_i = \frac{\lk(L_i, L - L_i)}{2} + \bbZ \subset \bbQ, \ \ \bbH(L) = \bigtimes_{i=1}^\ell \bbH(L)_i,$$ 
where $\lk$ denotes linking number.

The relative Alexander grading is defined by
$$ A_i(\bfx) - A_i(\bfy) = n_{z_i}(\phi) - n_{w_i}(\phi),$$
where $\phi$ is any class in $\pi_2(\bfx, \bfy)$. Recall that each intersection point $\bfx\in \Ta \cap \Tb$ is assigned a relative $\spin^{c}$ structure on $(Y,L)$, via a construction of non-vanishing vector fields, which we denote by $\underline{\mathfrak{s}}(x)$.
The relative Alexander multi-grading can be lifted to an absolute grading using the identification between the relative $\spin^{c}$ structure with $\bbH(L)$, see \cite{ozsvath2008holomorphic}. The bigrading can be extended over the coefficient ring $\bbF[\scrU_1,\scrV_1,\cdots,\scrU_l,\scrV_l]$ by the following rules. For the Maslov grading, $\scrU_i$ has $\gr_{\ps}$-grading $-2$ if $w_i\in\ps$ and grading $0$ otherwise. Similarly, $\scrV_i$ has $\gr_{\ps}$-grading $-2$ if $z_i\in\ps$ and grading $0$ otherwise. For the Alexander grading, $A(\scrV_i)=-A(\scrU_i)=\es_i$, where $\es_i$ is the standard $i$-th coordinate vector in $\bbR^{l}$. The $\scrU_i\scrV_i$ actions are homotopic to each other for different $i$, and we denote it by $U:=\scrU_i\scrV_i$.
\subsection{2 handle cobordism maps}
In this subsection, we review the link cobordism map associated to a $4$-dimensional $2$-handle attachment. The handles are attached along a framed link disjoint from the based link, which is carried through the cobordism. Following \cite{zemke2019link}, we describe the map using a $\beta$-bouquet and a subordinate Heegaard triple. This description will be used in Section~\ref{sec:surgerycob} to identify the cobordism map with an inclusion of subcomplexes in the link surgery formula. Suppose $\bbL=(L,\ws,\zs)$ is a based link in $Y$ and $\bbS^1$ is a framed 1-dimensional link in $Y\setminus L$. We denote the link cobordism induced by the 2-handle attachments on $\bbS^1$ by $W(Y,\bbS^1)$, and the corresponding surgery 3-manifold by $Y(\bbS^1)$. Let $\bbL_\alpha$ denote the closure of the components of $L\setminus(\ws\cup\zs)$ which are oriented to start in $\ws$ and end in $\zs$. Let $\bbL_\beta$ denote the closure of the components of $L\setminus(\zs\cup\ws)$ which start in $\zs$ and end in $\ws$.

\begin{defni}
	An $\beta$-\textit{bouquet}, $\calB^\alpha$, for a framed link $\bbS^1$ in $Y\setminus L$ is a collection of arcs which connect each of the component of $\bbS^1$ to one of the components of $\bbL_\beta$. We assume that there is exactly one arc per components of $\bbS^1$, and each arc has one endpoint on $\bbS^1$ and one endpoint in $\bbL_\beta$. 
\end{defni}

Given a bouquet $\calB$ for a framed link $\bbS^1\subset Y\setminus L$, we use $\underline{\calB}$ to denote the union of the bouquet $\calB$ with the arcs in $\bbL$ that it connects to. We also use $\ws(\calB)$ and $\zs(\calB)$ for the collection of basepoints on the corresponding arcs. One can construct the sutured manifold $Y(\calB)$ obtained by removing a regular neighborhood of $\bbS^1\cup \calB$, and adding sutures corresponding to the basepoints $\ws(\calB)$ and $\zs(\calB)$.

\begin{defni}\label{def:bouquetsubHD}
	Suppose $\bbL=(L,\ws,\zs)$ is a based link in $Y$ and $\bbS^1$ is a framed 1-dimensional link in $Y\setminus L$. We say that a triple $(\Sigma,\alpha_1,\ldots,\alpha_n,\beta_1,\ldots,\beta_n,\beta_1',\ldots,\beta_n',\ws,\zs)$ is \emph{subordinate} to a $\beta$-bouquet $\calB^\beta$ for $\bbS^1$ if the following are satisfied:
	\begin{enumerate}
		\item After removing neighborhoods of the $\ws(\calB^\beta)$ and $\zs(\calB^\beta)$-basepoints, the diagram
		\[(\Sigma,\alpha_1,\ldots,\alpha_n,\beta_{k+1},\ldots,\beta_n,\ws,\zs)\]
		becomes a sutured diagram for $Y(\calB^\beta)$.
		\item The curves $\beta_1,\ldots,\beta_k$ are each a meridian of a different component of $\bbS^1$. In particular
		\[(\Sigma,\alpha_1,\ldots,\alpha_n,\beta_1,\ldots,\beta_n,\ws,\zs)\]
		is a diagram for $(Y,\bbL)$.
		\item The curves $\beta_{k+1}',\ldots,\beta_n'$ are each small Hamiltonian translates of the curves $\beta_{k+1},\ldots,\beta_n$ respectively, with $\abs{\beta_i\cap\beta_j'}=2\delta_{ij}$.
		\item For $i=1,\ldots,k$, the curve $\beta_i'$ is induced by the framing of the corresponding link component that $\beta_i$ is a meridian of.
	\end{enumerate}
\end{defni}

If $(\Sigma,\alpha,\beta,\beta',\ws,\zs)$ is subordinate to a $\beta$-bouquet, then $(\Sigma,\beta,\beta',\ws,\zs)$ is a link Heegaard diagram for a $\abs{\ws}$ components unlink in $(S^1\times S^2)^{\#(n-k)}$. Hence, $\HFL(\Sigma,\beta,\beta',\ws,\zs,\frs_0)$ admits a distinguished element $\Theta_{\beta,\beta'}^+$, see Lemma 3.6 in \cite{zemke2019link}. Here, $\frs_0$ is the unique torsion $\spin^c$ structure on $(S^1\times S^2)^{\#(n-k)}$, and $\Theta_{\beta,\beta'}^+$ is the generator with the maximum $\gr_{\ws}$ grading.

Given a Heegaard triple $(\Sigma,\alpha,\beta,\beta',\ws,\zs)$, one can construct a 4-manifold $X_{\alpha,\beta,\beta'}$ with boundary $-Y$, $Y(\bbS^1)$, and $(S^1\times S^2)^{\#(n-k)}$, see \cite{ozsvath2006holomorphic}. One can fill in $(S^1\times S^2)^{\#(n-k)}$ with 3- and 4- handles to get a 4-manifold which is diffeomorphic to $W(Y,\bbS^1)$. Hence, given $\frs\in \spin^c(W(Y,\bbS^1))$, we can restrict $\frs$ to $\spin^c(X_{\alpha,\beta,\beta'})$, and the $\beta$-subordinate 2-handle map
\[F_{Y,\bbL,\bbS^1,\frs}^{\beta}:\CFL(Y,\bbL,\frs|_Y)\rightarrow\CFL(Y(\bbS^1),\bbL,\frs|_{Y(\bbS^1)})\]
is defined via the formula
\begin{equation}\label{eq:linkcobmap}
	F_{Y,\bbL,\bbS^1,\frs}^{\beta}(\xs):=F_{\alpha,\beta,\beta',\frs}(\xs\otimes\Theta_{\beta,\beta'}^+)=\sum_{\bfy \in \Ta \cap \mathbb{T}_{\beta'}} \sum_{\substack{\psi\in\pi_2(\xs,{\Theta_{\beta,\beta'}^{+}},\ys)\\ \mu(\psi)=0 \\ \frs_{\ws}(\psi)=\frs }} \# (\calM(\psi)/\bbR) \scrU_{1}^{n_{w_1}(\psi)} \cdots   \scrU_{l}^{n_{w_l}(\psi)}\scrV_{1}^{n_{z_1}(\psi)} \cdots   \scrV_{l}^{n_{z_l}(\psi)}\bfy.
\end{equation}

\subsection{Link surgery formula}\label{subsec:surgeryformula}
Given a knot surgery description of a 3-manifold, Ozsv\'{a}th and Szab\'{o} developed the knot surgery formula \cite{ozsvath2008knot} to compute Heegaard Floer homology of the surgery manifold in terms of the knot Floer complex. This was later generalized by Manolescu and Ozsv\'{a}th \cite{manolescu2010heegaard} to link surgery formula, which applies when the 3-manifold is described as a surgery on a link in an integral homology 3-sphere. 

Let $(L,\Lambda)$ be a framed link in an integral homology 3-sphere $Y$, we also fix a meridianal link diagram of $L$. The input data for the link surgery complex is called a complete system of hyperboxes for the link $L$ (see \cite[Section~8]{manolescu2010heegaard}), which can be derived from the meridianal link diagram together with an arc system. The output is a hypercube of chain complexes. We follow the convention from \cite{zemke2023general}, and denote it by $\mathbb{X}_\Lambda(L)$. At each vertex of the hypercube, one assigns the Heegaard Floer chain complex associated to a reduction of the meridianal link diagram, while the differential maps between different vertices also depend on the chosen arc system. The homology of the link surgery hypercube is isomorphic to the Heegaard Floer homology of the surgery manifold. 
		
\begin{Theorem}\cite[Theorem~1.1]{manolescu2010heegaard}\label{linksurgeryformula}\label{theorem:lsf}
Fix a complete system of hyperboxes for an oriented link $L$ in an integral homology three-sphere $Y$,
and fix a framing $\Lambda$ of $L$. There is an isomorphism of
homology groups:
\begin {equation*}
\label {eq:surgery0}
H_*(\mathbb{X}_\Lambda(L)) \cong  \HFm_*(Y_\Lambda(L)),
\end {equation*}
where $\HFm$ is the
completed version of Heegaard Floer homology over the power series
ring $\bbF[[U]]$.
\end{Theorem}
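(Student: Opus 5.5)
This statement is the Manolescu--Ozsv\'ath link surgery formula, and the paper quotes it directly from \cite{manolescu2010heegaard}. The plan here follows their argument rather than proving anything new; the goal is to say how it goes. It is an induction on the number of components of $L$, modeled on the Ozsv\'ath--Szab\'o knot surgery formula \cite{ozsvath2008knot}.

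For a single component, one first uses the surgery exact triangle for large surgeries. For sufficiently large framing $\Lambda+N\mu$, the complex $\CFm$ of each $\spin^c$ structure on $Y_{\Lambda+N\mu}(L)$ is identified with a subquotient complex $A_s$ of the knot complex. Concretely, this comes from a Heegaard triple subordinate to a bouquet, using the $2$-handle maps of Equation~\eqref{eq:linkcobmap}: small triangles near the winding region give the identification.

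The exact triangle relating $Y$, $Y_\Lambda$ and $Y_{\Lambda+N\mu}$ is then realized at the chain level as a mapping cone. Its maps are the sums $v_s+h_s$ of the two cobordism maps, corresponding to counts of triangles with the two possible $\spin^c$ behaviours. This gives a quasi-isomorphism between $\CFm(Y_\Lambda)$ and the cone $\mathbb X_\Lambda(K)$. Completion over $\bbF[[U]]$ is needed so that the infinitely many $\spin^c$ summands form a direct product; these topological issues are handled as in Section~\ref{subsec:lineartopology}.

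For links, one builds the full hypercube from a complete system of hyperboxes. Each vertex $\varepsilon$ carries the generalized Floer complex of the sublink $L_{\varepsilon}$, obtained by reducing the meridianal diagram. Edges and higher diagonals count holomorphic polygons for the chains of curves in the hyperbox, and the hypercube relations follow from degenerations of polygon moduli spaces.

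The surgery component by component is then done as an iterated mapping cone. Surgering the $i$-th component is a cone along the $i$-th coordinate, and the large surgery identification carries through while the other components stay as links in the surgered manifold. This uses the link version of the large surgery isomorphism, which requires a suitable filtration (truncation) argument to reduce to finitely many $\spin^c$ structures at each stage.

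I expect the main obstacle to be the coherence of the higher polygon maps under the inductive identification. One must show that the quasi-isomorphisms obtained at each stage assemble into a morphism of hypercubes. This requires invariance of the polygon counts under the choices of arc systems and hyperboxes, and filtration and truncation estimates showing that the direct product completion commutes with the iterated cones.
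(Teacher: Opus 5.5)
The paper does not prove this statement. It quotes it from \cite{manolescu2010heegaard} and uses it as a black box, and your outline is a faithful summary of that cited argument: large-surgery identification via small triangles, the surgery exact triangle realized as a mapping cone and iterated over the components into a hypercube, truncation to finitely many $\spin^c$ structures, and completion, with the coherence of the higher polygon maps rightly singled out as the hard step. Two details need tightening: the third term of the triangle is $\boldsymbol{CF}^-(Y)$ with twisted coefficients in $\bbF[\bbZ/N]$, i.e.\ $N$ copies of $\boldsymbol{CF}^-(Y)$; and in \cite{manolescu2010heegaard} the completion is implemented by direct products over $\bbF[[U]]$, as the remark after the theorem explains, not by the linear-topology formalism of Subsection~\ref{subsec:lineartopology}, which the paper recalls for Zemke's bordered setting.
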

	
\begin{Remark}
In Theorem \ref{linksurgeryformula}, the link surgery complex involves direct product, which is different from the knot surgery formula in \cite{ozsvath2008knot}, where a directed sum is used. Using direct sum is only appropriate for computing the plus and hat flavor using an analogous surgery formula. For the minus flavor, the isomorphism does not hold (see the unknot computation in \cite[Section~4]{manolescu2010heegaard}). To achieve the isomorphism for the minus flavor, we need to complete the direct sum, which gives us the direct product. For the same reason, when working with the bordered aspect from \cite{zemke2021bordered}, we also need to take completion with respect to a chosen linear topology, as defined in Subsection \ref{subsec:lineartopology}.
\end{Remark}

The link surgery formula of Manolescu, Ozsv\'{a}th and Szab\'{o} has the restriction that the framed link must be null-homologous. This was extended by Zemke in \cite{zemke2023general} to a general surgery formula for arbitrary links in closed 3-manifolds. The extension uses a new perspective, which is established by studying the exact triangles in the Fukaya category of the torus. We find this new perspective convenient for proving the 2-handle link cobordism map in Subsection \ref{sec:surgerycob}, and briefly review the construction here.

The objects under consideration are Lagrangians with local systems: to each Lagrangian $L$, one assigns a vector space $E$, together with a monodromy representation of the fundamental groupoid of $L$ into $\mathrm{Aut}(E)$. In the setting of general surgery formula, the relevant Lagrangians are tori $\bbT_\gamma=\gamma_1\times\dots\times\gamma_n\subseteq \mathrm{Sym}^n(\Sigma)$ for a set of attaching curves $\gs\subseteq\Sigma$. Here, $\Sigma$ is a surface with basepoints. The monodromy is constructed by choosing a codimension 1 hypersurface $S$ in $\bbT_\gamma$ and counting intersection numbers with $S$. The vector spaces are $\bbF[U]$ modules. More precisely, the hypersurface is
\[(K\times \mathrm{Sym}^{n-1}(\Sigma))\cap \bbT_\gamma,\]
where $K$ is a simple closed curve on the Heegaard diagram. Together with an algebra morphism
\[\rho:\bbF[\bbZ]\rightarrow\mathrm{End}_{\bbF[U]}(E),\]
one defines the monodromy representation by counting intersection numbers. When there is no risk of confusion, we abbreviate $\gs^{E}$ for the triple $(\bbT,E,\rho)$.

A Floer morphism from $\gs^{E}$ to $\gs^{E'}$ consists of a pair $\langle\xs,\phi\rangle$ where $\xs\in\bbT_\gamma\cap\bbT_{\gamma'}$ and $\phi:E\rightarrow E'$ is an $\bbF[U]$-equivariant map. Given a collection of Lagrangians with local systems $(\gs_1,\rho_1),\dots,(\gs_1,\rho_1)$, the holomorphic polygon maps are defined as the following. Let
\[\langle\xs_{i,i+1},\phi_{i,i+1}\rangle:\gs_i^{E_i}\rightarrow\gs_{i+1}^{E_{i+1}}\]
be a sequence of Floer morphisms, for $1\leq i\leq n-1$. The holomorphic polygon map counts holomorphic $n$-gons of index $3-n$ with the inputs $\xs_{i,i+1}$. The outputs will consist of Floer morphisms from $\gs_1^{E_1}\rightarrow\gs_{n}^{E_{n}}$, which are pairs $\langle\ys,f\rangle$. Given a polygon $\psi$, let $m_i:=\#(\partial_{\gamma_i}(\psi)\cap K)$. The morphism $f$ is the composition
\begin{equation}\label{eq:localsystem}
	U^{n_w(\psi)}(\rho_n(e^{m_n})\circ\phi_{n-1,n}\circ\rho_{n-1}(e^{m_{n-1}})\circ\dots\circ\phi_{1,2}\circ\rho_1(e^{m_1})),
\end{equation}
where $w$ is a basepoint on $\Sigma$.

Following \cite{zemke2023general}, we first review the knot surgery complex. Fix a pair $(Y,K)$ and a meridianal Heegaard diagram $(\Sigma,\as,\bs,w,z)$ for $(Y,K)$. A \emph{shadow} of $K$ in $\Sigma$ is a concatenation of two embedded arcs connecting $w$ to $z$ such that one arc avoids $\alphas$ and the other arc avoids $\betas$. Pushing the interior of the arc which avoids $\alphas$ (resp. $\betas$) into the alpha-handlebody (resp. beta-handlebody) yields a copy of the knot $K$. Abusing notation, we also denote the shadow by $K\in \Sigma$. Note that, picking a shadow of $K$ induces a Morse framing of the knot. Since we are working with a meridianal Heegaard diagram, the shadow $K$ intersects only the special meridianal beta curve $\beta_0$, and the neighborhood of $K\cup\beta_0$ is a punctured torus which is disjoint from the beta curves except for $\beta_0$. This torus is called the \emph{special toroidal region} for $K$. A local picture is given in Figure \ref{fig:localpic}, where we also wind $\beta_0$ to obtain a new curve, denoted by $\beta_1$. Let $\betas_0$ denote the collection of $\betas$ curves from the Heegaard diagram. Define $\betas_1$ to be the collection of curves obtained by taking $\beta_1$, and adjoining small translates of the curves from $\betas\setminus\beta_0$. These two collections of curves determine two Lagrangian tori in the symmetric product, which, by abusing of notation, we also denote by $\betas_0$ and $\betas_1$. The local systems are given by:
\begin{enumerate}
	\item $\betas_0$ is decorated with the module $E_0:=\bbF[\scrU,\scrV]$. The monodromy
	\[\rho_0:\bbF[\bbZ]\rightarrow \mathrm{Hom}_{\bbF[U]}(E_0,U^{-1}E_0)\]
	sends $e^n$ to $\scrV^n\cdot\mathrm{id}$.
	\item $\betas_1$ is decorated with the module $E_1:=\bbF[\scrU,\scrV,\scrV^{-1}]$. The monodromy
	\[\rho_1:\bbF[\bbZ]\rightarrow \mathrm{Hom}_{\bbF[U]}(E_1,E_1)\]
	sends $e^n$ to $\scrV^n\cdot\mathrm{id}$.
\end{enumerate}

\begin{figure}[h]
	\centering
	\begin{tikzpicture}
		\draw[dashed] (0,0) rectangle (4,4);
		\draw[dashed] (0,1)--(4,1);
		\draw[blue] (2,0)--(2,4);
		\draw[red] (0,2)--(4,2);
		\draw[violet] (3,4) to[out=190, in=0] (0,2.9);
		\draw[violet] (3,0) to[out=170, in=0] (0,1.1);
		\draw[violet] (4,1.1) to[out=170, in=190, looseness=5] (4,2.9);
		\node[fill=white, inner sep=1pt] at (1.5,1) {$w$};
		\node[fill=white, inner sep=1pt] at (2.5,1) {$z$};
		\node at (3.5,0.5) {$K$};
		\node[blue] at (2.3,3.3) {$\beta_0$};
		\node[violet] at (3.5,1.5) {$\beta_1$};
		\node[red] at (1,2.3) {$\beta_\lambda$};
		\fill (2,0.37) circle (2pt);
		\node[right=3pt] at (2,0.37) {$\theta_{\tau}^+$};
		\fill (2,1.58) circle (2pt);
		\node[right=3pt] at (2,1.7) {$\theta_{\sigma}^+$};
	\end{tikzpicture}
	\caption{The diagram $(\bbT^2,\beta_0,\beta_1,w,z)$}
	\label{fig:localpic}
\end{figure}

There are two morphisms
\[\langle\theta_{\sigma}^+,\phi^\sigma\rangle,\langle\theta_{\tau}^+,\phi^\tau\rangle:\betas_0^{E_0}\rightarrow\betas_1^{E_1},\]
used in the knot surgery formula. The algebra morphisms
\[\phi^\sigma,\phi^\tau:\bbF[\scrU,\scrV]\rightarrow\bbF[\scrU,\scrV,\scrV^{-1}]\]
are given by the formulas
\[\phi^\sigma(\scrU^i\scrV^j)=\scrU^i\scrV^j \qquad \mathrm{and}\\ \qquad \phi^\tau(\scrU^i\scrV^j)=\scrU^j\scrV^{2j-i}.\]
The intersection points of $\theta_{\sigma}^+$ and $\theta_{\tau}^+$ in the special toroidal region are shown in Figure \ref{fig:localpic}. Outside the local region, they coincide with top degree intersection points between curves of $\betas$ and their translates.

Denote the Morse framing of $K$ by $\lambda$. The knot surgery complex $\bbX_{\lambda}(K)$ is defined to be the $\bbF[U]$-chain complex
\[\bbX_{\lambda}(K)=\mathrm{Cone}(v+h:\boldsymbol{CF}^-(\alphas,\betas_0^{E_0})\rightarrow\boldsymbol{CF}^-(\alphas,\betas_1^{E_1})),\]
where $v$ counts holomorphic triangles with inputs $\langle\theta_{\sigma}^+,\phi^\sigma\rangle$ and $h$ counts holomorphic triangles with inputs $\langle\theta_{\tau}^+,\phi^\tau\rangle$.

Consider another collection of curves $\betas_\lambda$, which consists of a push-off of the knot shadow: $\beta_\lambda$, together with small translates of the curves from $\betas\setminus\beta_0$. We decorate it with the trivial local system:
\[E_\lambda=\bbF[U]\qquad \mathrm{and} \qquad \rho(e^n)=\mathrm{id}.\]
The Heegaard Floer chain complex of the surgery manifold is given by: $\boldsymbol{CF}^-(Y_\lambda(K))=\boldsymbol{CF}^-(\alphas,\betas_\lambda).$ The homotopy equivalence between $\boldsymbol{CF}^-(\alphas,\betas_\lambda)$ and $\mathrm{Cone}(v+h:\boldsymbol{CF}^-(\alphas,\betas_0^{E_0})\rightarrow\boldsymbol{CF}^-(\alphas,\betas_1^{E_1}))$ is proved in \cite{zemke2023general} by showing an equivalence in the Fukaya category.
\begin{Theorem}\cite[Theorem~1.1]{zemke2023general}
	There is an equivalence in the Fukaya category
	\[\betas_\lambda\simeq\mathrm{Cone}(\langle\theta_{\sigma}^+,\phi^\sigma\rangle+\langle\theta_{\tau}^+,\phi^\tau\rangle: \betas_0^{E_0}\rightarrow\betas_1^{E_1}).\]
\end{Theorem}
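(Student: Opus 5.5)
The statement is local to the special toroidal region, so the plan is to prove it as an equivalence of twisted complexes in a Fukaya category of the punctured torus of Figure~\ref{fig:localpic}. I would then transport it to $\mathrm{Sym}^n(\Sigma)$ by the standard stabilization and neck-stretching argument. Outside the toroidal region, $\betas_\lambda$, $\betas_0$ and $\betas_1$ are all small Hamiltonian translates of $\betas\setminus\beta_0$, and every morphism uses the top-degree generators there. Stretching the neck along the boundary of the toroidal region therefore makes every polygon count factor as a product of a count of small polygons outside and a count of polygons in $\bbT^2$. Hence it suffices to prove
\[
\beta_\lambda\simeq \mathrm{Cone}\bigl(\langle\theta_\sigma^+,\phi^\sigma\rangle+\langle\theta_\tau^+,\phi^\tau\rangle:\beta_0^{E_0}\to\beta_1^{E_1}\bigr)
\]
in the Fukaya category of $(\bbT^2,w,z)$, with the local systems described above and morphism spaces completed in the cofinite basis topology.

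On the torus I would write down explicit morphisms of twisted complexes. The map
\[
f:\beta_\lambda\to\mathrm{Cone}
\]
has components $\beta_\lambda\to\beta_0^{E_0}$ and $\beta_\lambda\to\beta_1^{E_1}$. The first is given by the unique intersection point of $\beta_\lambda$ with $\beta_0$, decorated with the inclusion $\bbF[U]\to\bbF[\scrU,\scrV]$ sending $U\mapsto\scrU\scrV$. The second is the zero map or a correction term chosen to make $f$ a cycle. The map
\[
g:\mathrm{Cone}\to\beta_\lambda
\]
is given by the intersection points of $\beta_1$ with $\beta_\lambda$. These points are decorated with the evaluation maps $\bbF[\scrU,\scrV,\scrV^{-1}]\to\bbF[U]$ that pick out the coefficient of the power of $\scrV$ matching the winding number.

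First I check that $f$ and $g$ are cycles. This amounts to the relation $m_2(\langle\theta_\sigma^+,\phi^\sigma\rangle+\langle\theta_\tau^+,\phi^\tau\rangle, f_0)=0$ together with its analogue for $g$. In the torus, index-zero triangles are immersed triangles, so the relation is a count of triangles. The monodromies $\rho_0,\rho_1$ record how many times each boundary arc crosses $K$, and the two families of triangles through $\theta_\sigma^+$ and $\theta_\tau^+$ cancel in pairs.

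Next I verify $g\circ f\simeq\mathrm{id}_{\beta_\lambda}$ and $f\circ g\simeq\mathrm{id}_{\mathrm{Cone}}$. The first is a triangle count producing $\theta^+_{\lambda,\lambda'}$ with coefficient $1$. The second requires triangle and rectangle counts, and the rectangles supply the homotopy. Alternatively, I could use a Yoneda-type criterion: compute $\mathrm{Hom}(\beta_\lambda,-)$ and $\mathrm{Hom}(\mathrm{Cone},-)$ against a generating set of test curves (for instance a meridian and a longitude of the torus) and show $f$ induces isomorphisms there. This reduces the problem to explicit lattice-point counts in the universal cover $\bbR^2$.

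The main obstacle is the infinite bookkeeping. Each winding of $\beta_1$ contributes infinitely many triangles and rectangles, indexed by lifts to $\bbR^2$. Their weights $\scrU^{n_w}\scrV^{n_z}$, combined with the monodromy factors and with $\phi^\tau(\scrU^i\scrV^j)=\scrU^j\scrV^{2j-i}$, must be summed and shown to telescope. These sums converge only after completion, so I would first organize the polygons by their $\bbZ$-lifts and Alexander grading, and show each graded piece involves finitely many terms. I would then check that the resulting maps are continuous with respect to the chiral topology, so that the identities hold in the completion. The $\tau$ component is where the orientation reversal in $\phi^\tau$ enters, and I expect this sign and grading matching to be the delicate point.
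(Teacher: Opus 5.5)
Your plan is essentially the route the paper defers to in \cite[Section~5]{zemke2023general}: an explicit homotopy equivalence in the special toroidal region, followed by a genus-one stabilization to pass to $\mathrm{Sym}^n(\Sigma)$. Your $g$ should be exactly the paper's $\Psi=\langle\theta_{\betas_1,\betas_\lambda},\Pi\rangle$, supported on the unique point of $\beta_1\cap\beta_\lambda$, with $\Pi$ keeping only the monomials $\scrU^i\scrV^i$.

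One correction to your bookkeeping concerns $f_1$ and the composite $g\circ f$. In the torus, $\mathrm{Hom}(\beta_\lambda,\beta_1^{E_1})$ is supported on a single intersection point and has vanishing differential, since the only periodic class $[\bbT^2]$ has Maslov index $2$. So a correction term $f_1$ cannot affect the cycle condition, and the $\theta_\sigma^+$ and $\theta_\tau^+$ triangle sums must cancel on the nose, as you say. With $f_1=0$ and $g$ supported on $\beta_1$, the composite $g\circ f$ is the twisted-complex term $\mu_3(f_0,\theta_\sigma^+ + \theta_\tau^+,g_1)$. That is a rectangle count, not a triangle count.
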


The proof proceeds by explicitly constructing a chain homotopy equivalence; see \cite[Section~5]{zemke2023general}. We recall only the homotopy map from the mapping cone to $\betas_\lambda$, which will be used in Subsection \ref{subset:surgerycob}. The homotopy map is given by:
\[
\Psi =
\begin{tikzcd}[column sep=large, row sep=large]
	\betas_0^{E_0}
	\arrow[r, dashed]
	& \betas_1^{E_1}
	\arrow[d, "{\langle\theta_{\betas_1,\betas_\lambda},\Pi\rangle}"']\\
	&\betas_\lambda
\end{tikzcd}
\]

The generator $\theta_{\betas_1,\betas_\lambda}$ consists of the unique intersection point between $\beta_\lambda$ and $\beta_1$, together with the top degree intersection points between curves of $\betas$ and their translates. $\Pi$ is the map given by
\[\Pi(\scrU^i\scrV^j)=\begin{cases}
	\scrU^i\scrV^j & \mathrm{if} \quad i=j,\\
	0 &\mathrm{otherwise}.
\end{cases}\]

The construction can be generalized to link surgery as follows. Let $(\Sigma,\alphas,\betas,\ws,\zs)$ be a meridianal Heegaard link diagram for $(Y,L)$ and pick embedded shadows of the components $K_1,\dots,K_n$ of $L$ which are pairwise disjoint. The union of each shadow with its special beta meridianal curve is a punctured torus, giving the same local picture as in Figure \ref{fig:localpic}. Write $\beta_{0,i}$ for the special meridian of $K_i$ and write $\beta_{1,i}$ for a copy of $\beta_1$ curve from the local punctured torus. One can then construct a hypercube of attaching curves $\betas_\varepsilon$ indexed by $\varepsilon\in \bbE_n$. This hypercube is denoted by $\mathscr{B}_\Lambda$. Let $\betas'$ be the union of the remaining beta curves which are not special meridianal curves, that is, $\betas=\betas'\cup\{\beta_{0,1},\dots,\beta_{0,n}\}$. At a vertex $\varepsilon$, $\betas_\varepsilon$ is given by taking small translates of $\betas'$, as well as a small translate of $\beta_{0,i}$ for $i$ such that $\varepsilon_i=0$, and a small translate of $\beta_{1,i}$ for $i$ such that $\varepsilon_i=1$. Small Hamiltonian translations are applied to the curves in each $\betas_\varepsilon$ to achieve admissibility and ensure that if two copies of a given beta curve appear in two collection, then they intersect in a pair of points.

Each collection $\betas_\varepsilon$ is decorated with the $\bbF[U_1,\dots,U_n]$-module
\[E_\varepsilon:=E_{\varepsilon_1}\otimes_\bbF\cdots\otimes_\bbF E_{\varepsilon_n}.\]
The algebra morphisms are extended to
\[
\phi_i^\sigma=\mathrm{id}^{\otimes(i-1)}\otimes\phi^\sigma\otimes\mathrm{id}^{\otimes(n-i)},
\qquad
\phi_i^\tau=\mathrm{id}^{\otimes(i-1)}\otimes\phi^\tau\otimes\mathrm{id}^{\otimes(n-i)},
\quad 1\leq i\leq n.
\]
If $\varepsilon<\varepsilon'$, and $\abs{\varepsilon'-\varepsilon}_1=1$ (where $\abs{-}_1$ denotes the $L^1$-norm), there are two morphisms
\[\langle\theta_{\sigma,\varepsilon,\varepsilon'}^+,\phi_i^\sigma\rangle \qquad \mathrm{and} \qquad \langle\theta_{\tau,\varepsilon,\varepsilon'}^+,\phi_i^\tau\rangle.\]
The intersection points are obtained by taking the corresponding intersection points in the $i$th local torus, where $i$ is the coordinate in which $\varepsilon$ and $\varepsilon'$ are different, together with top degree intersection points outside this local torus. We abbreviate these morphisms as $\theta_{\sigma,i}$ and $\theta_{\tau,i}$.

The link surgery complex is obtained by pairing
\[\bbX_{\Lambda}(Y,L):=\boldsymbol{CF}^-(\alphas,\mathscr{B}_\Lambda).\]
 
Given $\varepsilon<\varepsilon'$, let $M$ be the sublink of $L$, where $K_i\subseteq M$ if $\varepsilon_i<\varepsilon_i'$. The differential $D_{\varepsilon,\varepsilon'}$ between $\bbX_{\Lambda}^\varepsilon(Y,L)$ and $\bbX_{\Lambda}^{\varepsilon'}(Y,L)$ decomposes as a sum
\[D_{\varepsilon,\varepsilon'}=\sum_{\vec{M}}D_{\vec{M}},\]
ranging over the orientations of $M$. Each $D_{\vec{M}}$ is defined by counting holomorphic $(\abs{\varepsilon'-\varepsilon}_1+2)$-gons, with input $\theta_{\sigma,j}$ when the orientation of $\vec{M}_j$ agrees with the orientation inherited from $L$, and input $\theta_{\tau,j}$ if the orientation of $\vec{M}_j$ differs from it. An Alexander grading $\bbH(L)$ is assigned to $\bbX_{\Lambda}(Y,L)$ as well. Note that, each component $(K_i,\lambda_i)$ determines an element of $H_1(Y-L)\cong \bbZ^n$, which we denote by $\Lambda_i$. Then the change in Alexander grading is given by
\[A(D_{\vec{M}})=\sum_{j}\Lambda_j,\]
where the sum is taken over the components $M_j$, whose orientation differs from the orientation inherited from $L$.

Let $\betas_\Lambda$ denote the set of curves obtained by replacing the special meridianal curves with the corresponding shadows. Equip $\betas_\Lambda$ with the trivial local system $\bbF[U_1,\dots,U_n]$, then $\boldsymbol{CF}^-(\alphas,\betas_\Lambda)=\boldsymbol{CF}^-(Y_\Lambda(L))$. As in the knot surgery case, the chain homotopy equivalence between $\bbX_{\Lambda}(Y,L)$ and $\boldsymbol{CF}^-(\alphas,\betas_\Lambda)$ is induced by a homotopy equivalence between hypercubes of attaching curves with local systems
\[\betas_\Lambda\simeq \mathscr{B}_\Lambda.\]
The chain homotopy is constructed by taking the connected sum with a genus-one surface and using the chain homotopy defined on these summands, see \cite[Section~6]{zemke2023general}. The homotopy equivalence $\Gamma:\boldsymbol{CF}^-(\alphas,\mathscr{B}_\Lambda)\rightarrow\boldsymbol{CF}^-(\alphas,\betas_\Lambda)$ decomposes as a sum
\begin{equation}\label{eq:chainhomotopy}
	\Gamma=\sum_{\vec{M}\subseteq L}\Gamma^{\vec{M}}
\end{equation}
ranging over oriented sublinks $\vec{M}\subseteq L$ (the empty link is allowed, which corresponds to the  summand which is nontrivial on $\bbX_{\Lambda}^{(1,\dots,1)}(L)$). $\Gamma^{\vec{M}}$ is only nontrivial on the vertex $\varepsilon$ determined by
\[\varepsilon_i=0 \quad \xLeftrightarrow{ } \quad \pm K_i\subseteq\vec{M}.\]
The map $\Gamma^{\vec{M}}$ counts holomorphic $(\abs{\vec{M}}+3)$-gons which have a special input for each component of $\vec{M}$, and a final input of $\langle\theta_{\beta_{(1,\dots,1),\beta_\Lambda}}^+,\Pi\rangle$, where $\Pi$ is $U_i$ equivariant and satisfies
\[\Pi(\scrV_1^{i_1}\dots\scrV_n^{i_n})=\begin{cases}
	1 & \mathrm{if} \ i_1=\cdots=i_n=0\\
	0 & \mathrm{otherwise}.
\end{cases}\]

The special inputs corresponding to components of $\vec{M}$ are length one chains in $\mathscr{B}_\Lambda$. If $+K_i\subseteq\vec{M}$, then the corresponding generator is $\theta_{\sigma,i}$, and if $-K_i\subseteq\vec{M}$, then the corresponding generator is $\theta_{\tau,i}$.
\section{2 handle cobordism maps from the link surgery formula}\label{sec:surgerycob}
One important feature of the link surgery formula is that one can study 2 handle cobordism maps via the inclusion of a subcomplex of the link surgery formula \cite[Section~14]{manolescu2010heegaard}. Let $(L,\Lambda)=(L_1,\Lambda_{L_1})\cup(L_2,\Lambda_{L_2})$ be an oriented link in $S^3$, where $L$ has $n$ components, and $L_1$ (respectively, $L_2$) has $n_1$ (respectively, $n_2$) components. Let $W_{\Lambda_{L_2}}(L_1,L)$ be the cobordism from $S^3_{\Lambda_{L_1}}(L_1)$ to $S^3_{\Lambda}(L)$ given by 2 handle attachment along $L_2$, with framing $\Lambda_{L_2}$. There is an identification between the Alexander grading and $\mathrm{Spin}^c$ structures on $W_{\Lambda_{L_2}}(L_1,L)$ \cite[Lemma~11.2]{manolescu2010heegaard}:
\begin{equation}\label{eq:idspinA}
	\mathrm{Spin}^c(W_{\Lambda_{L_2}}(L_1,L))\cong \bbH(L)/ H(L_1,\Lambda_{L_1}),
\end{equation}
where $H(L_1,\Lambda_{L_1})$ is generated by the classes $\Lambda_i$ represented by the $\lambda_i$-framed push-offs of $K_i$, for $(K_i,\lambda_i)\in (L_1,\Lambda_{L_1})$. The identification is given by sending $[\ve{s}\in \bbH(L)/ H(L_1,\Lambda_{L_1})]$ to the $\mathrm{Spin}^c$ structure with first Chern class
\[c_1([\ve{s}])=[2\ve{s}-(\Lambda_1+\cdots+\Lambda_n)].\]
Moreover, the natural projection
\[\pi^{L,L_1}:(\bbH(L)/ H(L_1,\Lambda_{L_1}))\rightarrow(\bbH(L)/ H(L,\Lambda))\]
corresponds to restricting the $\spin^c$ structures to $S^3_{\Lambda}(L)$, and the map
\[\psi^{L-L_1}:(\bbH(L)/ H(L_1,\Lambda_{L_1}))\rightarrow(\bbH(L_1)/ H(L_1,\Lambda_{L_1})),\]
defined on each component $K_i$ of $L_1$ by 
\[{\psi}^{L-L_1}_{i}: s_{i}\rightarrow s_{i}-\frac{\mathrm{lk}(K_{i},L_2)}{2},\]
corresponds to restricting the $\spin^c$ structures to $S^3_{\Lambda_{L_1}}(L_1)$.

For the $n$-components link $L$, the corresponding link surgery formula $\bbX_{\Lambda}(S^3,L)$ is an $n$-dimensional hypercube. Let $\bbX_{\Lambda_{L_1}}(S^3,L_1)$ be the $n_1$-dimensional subcube for which all coordinates corresponding to $L_2$ are equal to 1. Recall that, the differentials of $\bbX_{\Lambda_{L_1}}(S^3,L_1)$ are labeled by oriented sublinks $\vec{M}\subseteq L_1$, and the Alexander grading change is given by the sum of $\Lambda_i$ over those components $K_i$ whose orientation is opposite to the orientation inherited from $L_1$. Hence, the subcube $\bbX_{\Lambda_{L_1}}(S^3,L_1)$ admits a splitting with respect to $\bbH(L)/ H(L_1,\Lambda_{L_1})$:
\[\bbX_{\Lambda_{L_1}}(S^3,L_1)=\bigoplus_{\ve{t}\in\bbH(L)/ H(L_1,\Lambda_{L_1})}\bbX_{\Lambda_{L_1}}(S^3,L_1,\ve{t}).\]
The cobordism maps are given by the inclusion of these subcomplexes.
\begin{Theorem}\cite[Theorem~14.3]{manolescu2010heegaard}\label{theorem:cubecob}
	For any $\ve{t}\in\bbH(L)/ H(L_1,\Lambda_{L_1})$, the following diagram commutes:
	\[
	\begin{tikzcd}[column sep=5em, row sep=5em]
		\bbX_{\Lambda_{L_1}}(S^3,L_1,\ve{t}) \arrow[r] \arrow[d, "\cong"'] & \bbX_{\Lambda}(S^3,L) \arrow[d, "\cong"] \\
		\boldsymbol{CF}^-(S^3_{\Lambda_{L_1}}(L_1),\ve{t}|_{S^3_{\Lambda_{L_1}}(L_1)}) \arrow[r, "F_{W_{\Lambda_{L_2}}(L_1,L),\ve{t}}"'] & \boldsymbol{CF}^-(S^3_{\Lambda}(L),\ve{t}|_{S^3_{\Lambda}(L)})
	\end{tikzcd}
	\]
\end{Theorem}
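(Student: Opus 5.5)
The statement is \cite[Theorem~14.3]{manolescu2010heegaard}. I would reprove it in Zemke's Fukaya-category language from Subsection~\ref{subsec:surgeryformula}, since that is the form in which it will be generalized in Theorem~\ref{theorem:linkcobsurgery}. The idea is to compare three hypercubes of attaching curves. The first is $\mathscr{B}_\Lambda$ for $L$. The second is its subcube $\mathscr{B}_{\Lambda_{L_1}}$, in which the $L_2$-coordinates are fixed at $1$. The third is the collection $\betas_{\Lambda_{L_1}}$, in which the $L_1$-meridians are replaced by their framed shadows while the $L_2$ special meridians are kept.

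Adding the $L_2$ shadows $\beta_{\lambda_j}$ for $K_j\subseteq L_2$ to this last collection gives a Heegaard triple $(\Sigma,\alphas,\betas_{\Lambda_{L_1}},\betas_\Lambda)$. This triple is subordinate to a $\beta$-bouquet for $L_2$ in the sense of Definition~\ref{def:bouquetsubHD}, with $L$ empty and $\ws$ a single basepoint. Hence the bottom map $F_{W,\ve{t}}$ is the triangle map with input $\Theta^+_{\betas_{\Lambda_{L_1}},\betas_\Lambda}$, as in \eqref{eq:linkcobmap}.

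The proof has three steps.

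First, I would check that the subcube $\bbX_{\Lambda_{L_1}}(S^3,L_1)\subseteq\bbX_\Lambda(S^3,L)$ is a subcomplex. This holds because hypercube maps only increase $\varepsilon$. The vertical isomorphisms are then the maps $\Gamma$ of \eqref{eq:chainhomotopy}, applied once for $L$ and once for $L_1$. The $L_1$ version is computed on the diagram in which the $L_2$-coordinates stay at $1$, so the $L_2$ local tori carry $\beta_{1,j}$ with local system $E_1$ throughout.

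Second, I would prove that the square commutes up to chain homotopy, $F\circ\Gamma_{L_1}\simeq\Gamma_L\circ\iota$. This follows from associativity of polygon maps in the Fukaya category. On an oriented sublink $\vec M\subseteq L_1$, the composite $\Gamma_L\circ\iota$ counts polygons with the special inputs of $\vec M$ followed by the final input $\langle\theta_{\beta_{(1,\dots,1)},\betas_\Lambda},\Pi\rangle$. The composite $F\circ\Gamma_{L_1}$ first closes up the $L_1$ tori and then applies $\Theta^+$ on the $L_2$ tori.

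The key local claim is the following identity in the genus-one local model of Figure~\ref{fig:localpic} for each $K_j\subseteq L_2$:
\[
\mu_2\bigl(\langle\theta_{\beta_1,\beta_{1}'},\mathrm{id}\rangle,\langle\theta_{\beta_1',\beta_\lambda},\Pi\rangle\bigr)=\langle\theta_{\beta_1,\beta_\lambda},\Pi\rangle,
\]
with higher compositions vanishing. This is a count of small triangles. Combined with the stabilization/connected-sum argument of \cite[Section~6]{zemke2023general}, it shows that the polygon sums agree up to the usual homotopy coming from the $A_\infty$ relations.

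Third, I would track $\spin^c$ structures. On the cube side, the final map $\Pi$ on $E_1=\bbF[\scrU,\scrV,\scrV^{-1}]$ keeps only the $\scrV_j^0$ part. A polygon then contributes to $\ve{t}$ precisely when its Alexander grading, read in $\bbH(L)/H(L_1,\Lambda_{L_1})$, equals $\ve{t}$. On the triangle side, I would use \eqref{eq:idspinA}: a triangle class $\psi$ determines $\frs_{\ws}(\psi)$ with $c_1=2\ve{s}-\sum\Lambda_i$. I would check that the local-system exponents $\scrV_j^{m_j}$ from \eqref{eq:localsystem} reproduce the Alexander shift $\psi^{L-L_1}$, so that $\Pi$ selects exactly the triangles with $\frs_{\ws}(\psi)=\ve{t}$.

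I expect this last bookkeeping to be the main obstacle. It requires matching the monodromy counts with the $c_1$ evaluation on the periodic domains of $X_{\alpha,\beta,\beta'}$. The first thing I would try is to verify it in the local torus, one component at a time, via a relative periodic domain containing the shadow $\beta_\lambda$, and then to add the contributions over the components.
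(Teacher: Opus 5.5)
Your proposal follows the same route as the paper: work in Zemke's local-system model, take the vertical equivalences to be the maps $\Gamma$ of \eqref{eq:chainhomotopy}, and use that $\Pi$ keeps only the $\scrV^0$-part to single out the triangles in the $\spin^c$ structure $\ve{t}$ via \eqref{eq:idspinA}; the paper's proof is essentially your Step~3 (its relation $k_i=j_i+n_{w_i}-n_{z_i}$), and it leaves your Step~2 associativity comparison implicit. One thing to tidy: your Step~2 actually composes with the triangle map whose $L_2$-input is $\langle\theta_{\beta_1',\beta_\lambda},\Pi\rangle$ on $\beta_{1,j}^{E_1}$, not with the $\Theta^+$ map on the $\beta_{0,j}$-triple you first set up, so either compute $F_{W,\ve{t}}$ on the triple with the $\beta_{1,j}$ as meridians or insert the change-of-diagram map from $\beta_{0,j}$ to $\beta_{1,j}$ (the paper makes this same identification tacitly when it calls the left vertical map an isomorphism ``by definition'').
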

The original proof appears in \cite[Section~14]{manolescu2010heegaard}; here, we give an alternative proof by comparing the maps in the setting of Zemke's general surgery formula.
\begin{proof}
	The hypercube $\bbX_{\Lambda_{L_1}}(S^3,L_1)$ has all the coordinates corresponding to $L_2$ equal to 1. Therefore, for each $K_i\in L_2$, the Lagrangian associated to $K_i$ is $\beta_{1,i}$, and the local system is $(\bbF[\scrU_i,\scrV_i,\scrV_i^{-1}],\rho_1)$. Hence, after fixing a $\spin^c$ structure $\ve{t}\in\bbH(L)/ H(L_1,\Lambda_{L_1})$, $\bbX_{\Lambda_{L_1}}(S^3,L_1,\ve{t})$ is chain homotopy equivalent to $\boldsymbol{CF}^-(S^3_{\Lambda_{L_1}}(L_1),\ve{t}|_{S^3_{\Lambda_{L_1}}(L_1)})$ by the definition of these complexes and link surgery formula.
	
	Under this identification, proving the theorem reduces to comparing the 2-handle cobordism map with the composition of inclusion map and the chain homotopy equivalence \ref{eq:chainhomotopy} from $\bbX_{\Lambda}(S^3,L)$ to $\boldsymbol{CF}^-(S^3_{\Lambda}(L),\ve{t}|_{S^3_{\Lambda}(L)})$. Fix a generator $(\xs,\phi)$, where $\phi$ is an algebra morphism from the trivial local system $\bbF[U]$ to the corresponding local system of the $\beta$ curves which depends on the coordinates of the vertex. Denote the projection of $\phi$ to the local system associated to $L_2$ by $\phi_2$. Note that $\phi_2$ is determined by $\phi_2(1)=\prod_{i:K_i\in L_2}\scrU_i^{k_i}\scrV_i^{j_i}$. The chain homotopy map $\Gamma$ counts the holomorphic triangles with the input $\langle\theta_{\beta_{(1,\dots,1),\beta_\Lambda}}^+,\Pi\rangle$. The output algebra morphism of these triangles, which is given in \ref{eq:localsystem}, is
	\[U^{\sum_{i:K_i\in L_2}n_{w_i}}\circ(\Pi\circ(\prod_{i:K_i\in L_2}\scrV_i^{n_{w_i}-n_{z_i}}\circ\phi)).\]
	Hence, by the definition of the $\Pi$ map, a triangle contributes nontrivially only if
	\[k_i=j_i+n_{w_i}-n_{z_i}.\]
	This relation determines the $\spin^c$ structure associated to the holomorphic triangle in terms of the Alexander grading of the generator $(\xs,\phi)$. Under the identification \ref{eq:idspinA}, this $\spin^c$ structure agrees with the one specified by the Alexander grading. This completes the proof.
\end{proof}

Another important feature of the link surgery formula is that, one can identify the link Floer complex of sublinks with subcubes of the surgery hypercube (see \cite[Section~9]{zemke2023general}). The link Floer complex $\CFL(S^3_{\Lambda_{L_1}}(L_1),L_2)$ is a chain complex over $R_{n_2}:=\bbF[\scrU_1,\scrV_1,\dots,\scrU_{n_2},\scrV_{n_2}]$. Let $\bbX(S^3,L_1;L_2)$ be the subcube of $\bbX_{\Lambda}(S^3,L))$ for which all the coordinates corresponding to $L_2$ are equal to 0. Note that, the module $\bbX(S^3,L_1;L_2)$ has a natural action of $R_{n_2}$. We have the following:
\begin{Theorem}\cite[Corollary~9.2]{zemke2023general}\label{theorem:sublink}
	There is a chain homotopy equivalence of chain complexes over $R_{n_2}$
	\[\CFL(S^3_{\Lambda_{L_1}}(L_1),L_2)\cong \bbX(S^3,L_1;L_2).\]
\end{Theorem}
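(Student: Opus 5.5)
The plan is to prove the equivalence in two stages: first collapse the $L_1$-directions of the subcube using Zemke's Fukaya-category equivalence, and then identify the remaining Floer complex, whose $L_2$-curves carry the local system $E_0$, with the link Floer complex via a domain computation.

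\emph{Collapsing the $L_1$-directions.} By definition, $\bbX(S^3,L_1;L_2)=\boldsymbol{CF}^-(\alphas,\mathscr{B}')$. Here $\mathscr{B}'\subseteq\mathscr{B}_\Lambda$ is the $n_1$-dimensional subcube of attaching curves in which, for each $K_i\subseteq L_2$, the curve is a translate of the special meridian $\beta_{0,i}$ decorated with $(E_0,\rho_0)=(\bbF[\scrU_i,\scrV_i],\ e^n\mapsto\scrV_i^n)$. Only the $L_1$-coordinates vary in $\mathscr{B}'$. Zemke's equivalence $\betas_\Lambda\simeq\mathscr{B}_\Lambda$ is built by taking connected sums of genus-one pieces, one special toroidal region per component, and the homotopy $\Gamma$ of \eqref{eq:chainhomotopy} is assembled from local triangle and polygon counts in those regions. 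I would apply exactly this construction only in the toroidal regions of the components of $L_1$, leaving the $L_2$-regions untouched. This should give an equivalence of hypercubes of attaching curves with local systems
\[
\mathscr{B}'\simeq \betas_{\Lambda_{L_1}}\cup\{\beta_{0,i}^{E_0}\}_{K_i\subseteq L_2}.
\]
Pairing with $\alphas$ then yields a homotopy equivalence
\[
\bbX(S^3,L_1;L_2)\simeq\boldsymbol{CF}^-\bigl(\alphas,\ \betas_{\Lambda_{L_1}}\cup\{\beta_{0,i}^{E_0}\}\bigr).
\]
This equivalence is $R_{n_2}$-equivariant, because the $L_2$ local systems enter every polygon count only through the composition rule \eqref{eq:localsystem}. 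That rule commutes with multiplication by $\scrU_i,\scrV_i$ for $K_i\subseteq L_2$.

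\emph{Identifying the $E_0$-complex with $\CFL$.} The diagram $(\Sigma,\alphas,\betas_{\Lambda_{L_1}}\cup\{\beta_{0,i}\},\ws,\zs)$, with the $L_2$ basepoints $w_i,z_i$ placed on either side of $\beta_{0,i}$, is a meridianal diagram for $(S^3_{\Lambda_{L_1}}(L_1),L_2)$. I would define a map sending $\langle\xs,\ \scrU^{\ve a}\scrV^{\ve b}\rangle\mapsto\scrU^{\ve a}\scrV^{\ve b}\xs$ and check that it intertwines differentials. A disk $\phi$ from $\xs$ to $\ys$ contributes the $E_0$-weight $U^{n_{w}(\phi)}\scrV^{m(\phi)}$, where $m(\phi)=\#(\partial_{\beta}\phi\cap K)$ counts intersections of its $\beta$-boundary with the shadow $K$. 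So it suffices to show $m(\phi)=n_z(\phi)-n_w(\phi)$ for each $L_2$-component. This is a local computation in the special toroidal region. The half of the shadow avoiding $\alphas$ meets only $\beta_0$, so crossing it from the $w$-side to the $z$-side changes the local multiplicity exactly when $\partial_\beta\phi$ runs along $\beta_0$ through the shadow. With $U=\scrU\scrV$, the weight then becomes $\scrU^{n_w}\scrV^{n_z}$, which is precisely the $\CFL$ weight.

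\emph{Expected obstacle.} I expect the main difficulty to be the first step, not the domain computation. One must verify that Zemke's local homotopy equivalences in the $L_1$-regions remain valid, and compatible with the higher hypercube maps, when the extra $L_2$ local systems $E_0$ are present. One must also check that admissibility, the completions (the cofinite/direct-product topology), and the $\scrV_i$-weights on polygons crossing the $L_2$ shadows are preserved. My first attempt would be to argue that the $L_2$ toroidal regions are disjoint from the $L_1$ regions. Then all polygons in the homotopies see the $L_2$ local system only through an overall $R_{n_2}$-linear factor, and the $L_1$ argument goes through verbatim over the coefficient ring $R_{n_2}$.
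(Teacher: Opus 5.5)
The paper gives no argument for this statement. It quotes the result from Zemke and uses it as a black box, so there is no in-paper proof to compare against. Your outline is the natural proof inside Zemke's local-system framework, and it is sound as a sketch:
\begin{itemize}
\item The face of $\mathscr{B}_\Lambda$ with all $L_2$-coordinates equal to $0$ gives $\boldsymbol{CF}^-(\alphas,\mathscr{B}')$.
\item The $L_1$-directions collapse by Zemke's genus-one equivalences, tensored with the identity in the disjoint $L_2$-regions.
\item Your weight identity $U^{n_w}\scrV^{\,n_z-n_w}=\scrU^{n_w}\scrV^{n_z}$ is exactly the computation the paper itself carries out, for triangles, in the proof of Theorem~\ref{theorem:linkcobsurgery}.
\end{itemize}
Compared with the bare citation, your route makes the locality and $R_{n_2}$-linearity of the equivalence explicit. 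That proof uses both tacitly when it treats surgered and retained components separately.

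Two small points should be tightened.

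First, the surgered components of $L_1$ still carry basepoints $w_j$ with their own variables $U_j$. So your second step literally produces the link Floer complex of $L_2$ with extra free basepoints. To reach $\CFL(S^3_{\Lambda_{L_1}}(L_1),L_2)$ itself you need the standard free-basepoint destabilization, with each $U_j$ acting as $U$, after completion.

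Second, state the identity $m(\phi)=n_z(\phi)-n_w(\phi)$ more cleanly. The quantity $n_z-n_w$ is the algebraic intersection of $\partial D(\phi)$ with any arc from $w$ to $z$. Take the half of the shadow that misses $\alphas$: only $\partial_\beta\phi$ meets it. The other half of the shadow misses $\betas$, so this intersection number equals $\#(\partial_\beta\phi\cap K)$.
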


Combining Theorem \ref{theorem:cubecob} and \ref{theorem:sublink}, we can reinterpret the 2-handle link cobordism map as the inclusion of a subcube into the link surgery hypercube. Let $$(L,\Lambda)=(L_1,\Lambda_{L_1})\cup(L_2,\Lambda_{L_2})\cup(L_3,\Lambda_{L_3})$$ be an oriented framed link. Then $\bbX(S^3,L;L_{3})$ is chain homotopic to the link Floer chain complex of $(S^3_{\Lambda_{L_1}\cup\Lambda_{L_2}}(L_1\cup L_2),L_3)$. Take the subcube $\bbX_{\Lambda_{L_1}}(S^3,L_1\cup L_3;L_3)$, for which the coordinates corresponding to $L_2$ components are equal to 1 and those corresponding to $L_3$ are equal to 0. $\bbX_{\Lambda_{L_1}}(S^3,L_1\cup L_3;L_3)$ admits a decomposition of subcomplexes with respect to the grading $\psi^{L-L_3}(\bbH(L))/H(L_1,\Lambda_{L_1})$, which is also identified with the $\spin^c$ structures on the cobordism $W_{\Lambda_{L_2}}(L_1,L_1\cup L_2)$. Then we have the following reinterpretation of the 2-handle link cobordism map.

\begin{Theorem}\label{theorem:linkcobsurgery}
For any $\ve{t}\in\psi^{L-L_3}(\bbH(L))/ H(L_1,\Lambda_{L_1})$, the following diagram commutes:
\[
\begin{tikzcd}[column sep=5em, row sep=5em]
	\bbX_{\Lambda_{L_1}}(S^3,L_1\cup L_3;L_3,\ve{t}) \arrow[r] \arrow[d, "\cong"'] & \bbX_{\Lambda}(S^3,L;L_3) \arrow[d, "\cong"] \\
	\boldsymbol{CFL}(S^3_{\Lambda_{L_1}}(L_1);L_3,\ve{t}|_{S^3_{\Lambda_{L_1}}(L_1)}) \arrow[r, "F_{W_{\Lambda_{L_2}}(L_1,L_1\cup L_2),\ve{t}}"'] & \boldsymbol{CFL}(S^3_{\Lambda_{L_1}\cup\Lambda_{L_2}}(L_1\cup L_2),L_3,\ve{t}|_{S^3_{\Lambda_1\cup\Lambda_2}(L_1\cup L_2)})
\end{tikzcd}
\]	
\end{Theorem}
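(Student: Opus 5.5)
The plan is to run the proof of Theorem~\ref{theorem:cubecob} again, this time carrying the unsurgered sublink $L_3$ along as a passive sublink. The vertical identifications will come from Theorem~\ref{theorem:sublink}, applied twice.

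\textbf{Step 1: the two vertical maps.} First, apply Theorem~\ref{theorem:sublink} to the framed link $L_1\cup L_3$, taking $L_3$ as the unsurgered sublink. The subcube $\bbX_{\Lambda_{L_1}}(S^3,L_1\cup L_3;L_3)$ is formed from the curves $\beta_{1,i}$ for $K_i\subseteq L_2$. Restricting to $\spin^c$ structures, as in the first paragraph of the proof of Theorem~\ref{theorem:cubecob}, identifies it with the link surgery complex of $(L_1,L_3)$ inside $S^3_{\Lambda_{L_1}}(L_1)$. The splitting by $\ve{t}$ is the one induced by $\psi^{L-L_3}$. Since the $L_3$-coordinates are held at $0$, the $L_3$-local systems are $E_0=\bbF[\scrU_i,\scrV_i]$ and are never touched by $\Pi$. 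The identification is therefore $R_{n_3}$-equivariant and respects the $L_3$-Alexander gradings.

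Second, apply Theorem~\ref{theorem:sublink} to all of $L$ to identify $\bbX_\Lambda(S^3,L;L_3)$ with $\CFL(S^3_{\Lambda_{L_1}\cup\Lambda_{L_2}}(L_1\cup L_2),L_3)$.

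\textbf{Step 2: commutativity.} Concretely, I will write the right vertical map as a homotopy equivalence $\Gamma_{L_3}$. It is the analogue of \eqref{eq:chainhomotopy} in which only the $L_1\cup L_2$ tori are replaced by the curves $\betas_{\Lambda}$, while the $L_3$ special meridians $\beta_{0,i}$ are kept, together with their basepoints $w_i,z_i$. As in \eqref{eq:chainhomotopy}, $\Gamma_{L_3}$ is a sum over oriented sublinks $\vec M\subseteq L_1\cup L_2$.

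Restricted to the subcube in which the $L_2$ coordinates equal $1$, only the summands with $\vec M\subseteq L_1$ survive. The composite $\Gamma_{L_3}\circ\mathrm{incl}$ is then a count of polygons with final input $\langle\theta^+,\Pi\rangle$, and it factors as follows. First apply the equivalence for $L_1$, which is the left vertical map. Then count holomorphic triangles with input $\theta_{\betas_1|_{L_2},\betas_{\Lambda_{L_2}}}$ decorated by $\Pi$ on the $L_2$ factors. I plan to justify this factorization by associativity of polygon counts in the Fukaya category with local systems. The alternative is a neck-stretching/degeneration argument, applied in the connected-sum genus-one regions used in \cite[Section~6]{zemke2023general}.

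The triangle map obtained this way is precisely the subordinate $\beta$-bouquet formula \eqref{eq:linkcobmap}, with the Heegaard triple subordinate to a bouquet for $L_2$. This holds because $\beta_{1,i}$ with the $\scrV$-local system plays the role of the meridian $\beta_i$, and $\beta_{\lambda,i}$ plays the role of the framing curve $\beta_i'$. The top generator matches $\Theta^+_{\beta,\beta'}$. The variables $\scrU_j,\scrV_j$ for $K_j\subseteq L_3$ are recorded by $n_{w_j},n_{z_j}$ identically on both sides, since the $L_3$ local systems are polynomial and untouched by $\Pi$.

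\textbf{Step 3: $\spin^c$ bookkeeping.} I will repeat the computation from the proof of Theorem~\ref{theorem:cubecob}. The map $\Pi$ kills every contribution except those with $k_i=j_i+n_{w_i}-n_{z_i}$ for $K_i\subseteq L_2$. This pins down the $\spin^c$ structure of each contributing triangle in terms of the $L_1\cup L_2$ Alexander grading of the input. Using \eqref{eq:idspinA} together with the projection $\psi^{L-L_3}$ (which forgets the $L_3$ shifts by $\lk/2$), this $\spin^c$ structure is $\ve{t}$. Hence only the summand $F_{W,\ve{t}}$ appears, and the diagram commutes up to homotopy.

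The main obstacle is Step 2: making the factorization of $\Gamma_{L_3}$ rigorous in the presence of the extra $L_3$ basepoints. The key point to check is that the presence of $w_j,z_j$ for $K_j\subseteq L_3$ does not obstruct the Fukaya-category equivalence $\betas_{\Lambda}\simeq\mathscr B_\Lambda$ for the $L_1\cup L_2$ part. I expect this to hold because the equivalence is local to the special toroidal regions, which are disjoint from the $L_3$ basepoints.
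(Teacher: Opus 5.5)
Your proposal is correct and follows the paper's route. The $L_2$ part is handled as in the proof of Theorem~\ref{theorem:cubecob}, with $\Pi$ selecting the $\spin^c$ structure $\ve{t}$. The $L_3$ coefficients are then matched with the basepoint weights of \eqref{eq:linkcobmap} via the local-system rule $U^{n_{w_j}}\scrV_j^{\,n_{z_j}-n_{w_j}}=\scrU_j^{n_{w_j}}\scrV_j^{\,n_{z_j}}$ on the untouched curves $\beta_{0,j}$. Your explicit factorization of $\Gamma\circ\mathrm{incl}$ through the left vertical map, via associativity, is a step the paper leaves implicit, so it makes the same argument more careful rather than changing it.
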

\begin{proof}
	The argument for the components of $L_2$ is the same as in Theorem~\ref{theorem:cubecob}. In particular, the projection $\Pi$ selects the specified $\spin^c$ structure.
	
	For each retained component $K_j\subset L_3$, the corresponding coordinate remains $0$, so its meridian carries the local system $(\beta_{0,j},\rho_{0,j})$. If $\psi$ is a triangle counted by the
	map, the local-system rule~\eqref{eq:localsystem} gives the coefficient
	\[
	U^{n_{w_j}(\psi)}\rho_{0,j}\!\left(e^{\,n_{z_j}(\psi)-n_{w_j}(\psi)}\right)=
	U^{n_{w_j}(\psi)}\scrV_j^{\,n_{z_j}(\psi)-n_{w_j}(\psi)}=
	\scrU_j^{\,n_{w_j}(\psi)}\scrV_j^{\,n_{z_j}(\psi)},
	\]
	where $U=\scrU_j\scrV_j$. Thus the coefficient on the retained link is multiplied by
	\[
	\prod_{K_j\subset L_3}\scrU_j^{\,n_{w_j}(\psi)}\scrV_j^{\,n_{z_j}(\psi)}.
	\]
	These are precisely the basepoint weights in the link cobordism map (\ref{eq:linkcobmap}). The comparison of the triangle maps therefore respects both the coefficient action and the $\spin^c$ decomposition.
\end{proof}
\section{Bordered surgery formula}\label{sec:bordered}
In \cite{zemke2021bordered}, Zemke interpreted the link surgery formula as type-$D$ modules over an associated algebra $\calK$. He also established a connected sum formula, which allows one to decompose a framed link into a connected sum of simpler pieces, and compute the link surgery formula as an $A_\infty$ tensor product of the type-$D$ modules of these pieces. Our computation of the cabling formula relies heavily on the connected sum formula, and we use this section to review both the bordered surgery formula and the connected sum formula.
\subsection{The knot and link surgery algebras}
There are a few equivalent definitions of the knot surgery algebra. In this paper, we follow the convention from \cite{zemke2021bordered}.
The \emph{knot surgery algebra} $\mathcal{K}$ is an algebra over the idempotent ring $\mathbf{I} = \mathbf{I}_0 \oplus \mathbf{I}_1$, where each of $\mathbf{I}_0$, $\mathbf{I}_1$ is rank one over $\mathbb{F} = \mathbb{Z}/2\mathbb{Z}$. Let $i_0$ and $i_1$ denote the identity elements of
$\mathbf I_0$ and $\mathbf I_1$, respectively. Then, we set
\begin{itemize}
	\item $\mathbf{I}_0 \cdot \mathcal{K} \cdot \mathbf{I}_0 = \mathbb{F}[\mathscr{U}, \mathscr{V}]$,
	\item $\mathbf{I}_1 \cdot \mathcal{K} \cdot \mathbf{I}_1 = \mathbb{F}[\mathscr{U}, \mathscr{V}, \mathscr{V}^{-1}]$,
	\item $\mathbf{I}_0 \cdot \mathcal{K} \cdot \mathbf{I}_1 = 0$, and
	\item $\mathbf{I}_1 \cdot \mathcal{K} \cdot \mathbf{I}_0 = \mathbb{F}[\mathscr{U}, \mathscr{V}, \mathscr{V}^{-1}] \otimes \langle \sigma, \tau \rangle$
\end{itemize}
for two special algebra elements $\sigma,\tau\in \mathbf{I}_1 \cdot \mathcal{K} \cdot \mathbf{I}_0$. For any $a \in \mathbb{F}[\mathscr{U}, \mathscr{V}]$, the elements $\sigma$ and $\tau$ satisfy
\[
\sigma \cdot a = \phi^\sigma(a) \cdot \sigma \qquad \text{and} \qquad \tau \cdot a = \phi^\tau(a) \cdot \tau
\]
where $\phi^\sigma, \phi^\tau: \mathbb{F}[\mathscr{U}, \mathscr{V}] \to \mathbb{F}[\mathscr{U}, \mathscr{V}, \mathscr{V}^{-1}]$ are the algebra homomorphisms with $\phi^\tau$ defined by
\[
\mathscr{U} \mapsto \mathscr{V}^{-1}\qquad \text{and} \qquad \mathscr{V} \mapsto \mathscr{U}\mathscr{V}^2
\]
and $\phi^\sigma$ is the canonical inclusion. These relations are usually rewritten
\[
\sigma \mathscr{U} = \mathscr{U} \sigma, \qquad \sigma \mathscr{V} = \mathscr{V} \sigma, \qquad \tau\mathscr{U} = \mathscr{V}^{-1} \tau, \qquad \tau \mathscr{V} = \mathscr{U}\mathscr{V}^2 \tau.
\]

We equip $\calK$ with the linear topology induced by the subspaces $J_n$ which are defined as follows.
\begin{defni}
	Suppose that $n\in\bbN$ is fixed. We define $J_n\subseteq \calK$ to be the $\bbF$-span of the following set of generators:
	\begin{enumerate}
		\item In $\bfI_0\cdot\calK\cdot\bfI_0$, the generators $\scrU^i\scrV^j$, for $i\geq n$ or $j\geq n$ (i.e. $\mathrm{max}(i,j)\geq n$).
		\item In $\bfI_1\cdot\calK\cdot\bfI_0$, the generators $\scrU^i\scrV^j\sigma$, for $i\geq n$ or $j\geq n$.
		\item In $\bfI_1\cdot\calK\cdot\bfI_0$, the generators $\scrU^i\scrV^j\tau$, for $j\leq 2i-n$ or $i\geq n$.
		\item In $\bfI_1\cdot\calK\cdot\bfI_1$, the generators $\scrU^i\scrV^j$, where $i\geq n$.
	\end{enumerate}
\end{defni} 
The \emph{link surgery algebra} is defined as
\[
\mathcal{L}_\ell := \underbrace{\mathcal{K} \otimes_{\mathbb{F}} \cdots \otimes_{\mathbb{F}} \mathcal{K}}_{\ell}
\]
which we view as an algebra over the idempotent ring $\mathbf{E}_\ell := \mathbf{I} \otimes_\mathbb{F} \cdots \otimes_\mathbb{F} \mathbf{I}$. The topology on $\calL_l$ is given by the !-topology;
\[\calL_\ell=\calK\otimes_\bbF^!\cdots\otimes_\bbF^!\calK.\]

We also need appropriate categories of type-$D$, type-$A$, and type-$DA$ modules over the surgery algebra to ensure that the map induced by taking box tensor product is continuous, hence induces well-defined map after taking completions. For a detailed treatment, see \cite[Section 7]{zemke2021bordered}. When the number of components is understood, we write $\mathcal L=\mathcal L_\ell$ and $\mathbf E=\mathbf E_\ell$.

\begin{defni}
	A \emph{type-D Alexander module} consists of a pair $(\calX,\delta^1)$ where $\calX$ is a linear topological right $\bfE$-module and
	\[\delta^1:\calX\rightarrow \calX\vec{\otimes}_{E}\calL\]
	is a linear topological morphism satisfying the type-$D$ relation in Definition \ref{def:typed}.
	Given two type-$D$ Alexander modules $\calX^\calL$ and $\calY^\calL$, let $\operatorname{Mor}(\mathcal X^\mathcal L,\mathcal Y^\mathcal L)$ denote the space of linear topological $\mathbf E$-module morphisms
	\[
	f^1:\mathcal X\longrightarrow\mathcal Y\vec\otimes_{\mathbf E}\mathcal L.
	\]
	This space is equipped with the type-$D$ morphism differential $\partial_{\mathrm{Mor}}$. A type-$D$ homomorphism is a morphism $f^1$ satisfying $\partial_{\mathrm{Mor}}(f^1)=0$.
\end{defni}

Note that, given two type-$D$ Alexander modules $\calX^{\calL_m}$ and $\calY^{\calL_n}$, their external product is again a type-$D$ Alexander module, when we use !-topology for both the underlying module and the algebra. More explicitly, the structure map $\delta_\calX^1\otimes\mathrm{id}_\calY\otimes 1_{\calL_n}+\mathrm{id}_\calX\otimes1_{\calL_m}\otimes\delta_\calY^1$ is continuous as a map:
\[\calX\otimes^!\calY\rightarrow(\calX\otimes^!\calY)\vec{\otimes}(\calL_m\otimes^!\calL_n).\] 

\begin{defni}
	A \emph{type-A Alexander module $(\calX,m_j)$} over $\calL$ consists of a linear topological left $\bfE$-module $\calX$ and a collection of linear topological morphisms
	\[m_{j+1}:\calL\vec{\otimes}_E\cdots\calL\vec{\otimes}_E\calL\vec{\otimes}_E\calX\rightarrow\calX,\]
	satisfying the $A_\infty$-relations in Definition \ref{def:Amodule}.
\end{defni}

In Subsection \ref{subsec:DAmodule}, we defined the box tensor product, which pairs a type-$D$ module with a type-$A$ module to produce a chain complex. Under the boundedness assumptions needed to define the box tensor
product, the induced differential is continuous with respect to the standard tensor topology \cite[Section 7.6]{zemke2021bordered}. We denote the resulting completed box tensor product by $\widehat{\boxtimes}$.

One can generalize the definitions to define a type-$DA$ Alexander bimodule. More generally, suppose $\bbP$ is a partition of $\{1,\ldots,l\}$ and $j_1,\ldots,j_n$ are the sizes of the partition elements, we also consider the \emph{$\bbP$-split type-DA Alexander module}, which is denoted by
\[_{\calL_{j_1}|\cdots|\calL_{j_n}}\calY^{\calL}.\]
The topology used here is a specified \emph{tree topology}, as defined in \cite[Section 5.6]{zemke2021bordered}. We omit the general definition here and instead using the fact that the bimodules under consideration are split type-$DA$ Alexander bimodules. An important result about continuity is the following. Given $n$ type-$D$ Alexander modules $\calX_1^{\calL_{i_1}},\ldots,\calX_n^{\calL_{i_n}}$ and a split type-$DA$ Alexander bimodule $_{\calL_{i_1}|\cdots|\calL_{i_n}}\calY^{\calL}$, the tensor product
\[(\calX_1^{\calL_{i_1}},\ldots,\calX_n^{\calL_{i_n}})\Boxtimes  _{\calL_{i_1}|\cdots|\calL_{i_n}}\calY^{\calL},\]
which is given by taking external product of $\calX_j^{\calL_{i_j}}$ first then taking box tensor product, is a type-$D$ Alexander module, when we topologize $(\calX_1\otimes\cdots\otimes\calX_n)\otimes\calY$ with a specified tree topology. Note that, the modules considered in this paper are equipped with cofinite basis topology, and the tree topology on their tensor product coincides with the standard tensor topology.

We list a few split type-$DA$ Alexander bimodules used in this paper. For simplicity, we follow the notation from \cite{zemke2021bordered} that we will write $|$ for $\otimes_\bbF$ and $\otimes$ for $\otimes_I$.
\begin{enumerate}
	\item \emph{Merge bimodule $_{\calK|\calK}M^\calK$}: As a vector space, $M$ is $\bfI=\bfI_0\oplus\bfI_1$. The left action of $\bfI|\bfI$ is given by \[(i_1|i_2)\cdot i=i_1i_2\cdot i,\] and the right $\bfI$ action is standard.
	The only nontrivial structure maps on $M$ are $\delta_2^1$ and $\delta_3^1$. The $\delta_2^1$ map is defined as follows. Suppose that $a_1|a_2$ is an elementary tensor in either
	\[(\bfI_0|\bfI_0)\cdot(\calK|\calK)\cdot(\bfI_0|\bfI_0) \qquad \mathrm{or} \qquad (\bfI_1|\bfI_1)\cdot(\calK|\calK)\cdot(\bfI_1|\bfI_1),\]
	then $\delta_2^1(a_1|a_2, i)=i\otimes a_1a_2$. $\delta_2^1$ vanishes on other elementary tensors.
	For $\delta_3^1$,
	\[\delta_3^1(1|\sigma,\sigma|1,i_0)=i_1\otimes \sigma,\qquad \mathrm{and} \qquad\delta_3^1(1|\tau,\tau|1,i_0)=i_1\otimes \tau.\]
	More generally, if $a,b,c,d$ are monomials concentrated in single idempotents, $\delta_3^1$ can be extended by 
	\[\delta_3^1(a|b\sigma,c\sigma|d,i_0)=i_1\otimes abcd\sigma,\qquad \mathrm{and} \qquad\delta_3^1(a|b\tau,c\tau|d,i_0)=i_1\otimes abc\tau d.\]
	$\delta_3^1$ vanishes on other sequences of elements from $\calK|\calK$.
	\item \emph{Pair-of-pants bimodule $_{\calK|\calK}W_{\alpha\beta,\alpha}^{\calK}$}: This bimodule has the same underlying module and $\delta_2^1$, $\delta_3^1$ structure maps as $_{\calK|\calK}M^\calK$, together with the additional structure map $\delta_5^1$ determined by the following formula:
	\[\delta_5^1(a|b,a'|b',1|\tau,\tau|1,i_0)=i_1\otimes \scrV^{-1}\partial_\scrU(ab)b'(\scrU\partial_\scrU+\scrV\partial_\scrV)(a')\tau.\]
	Here $a,b,a',b'\in\mathbf I_1\cdot\mathcal K\cdot\mathbf I_1$, and the derivatives are formal derivatives in $\mathbb F[\mathscr U,\mathscr V,\mathscr V^{-1}]$.
	\item \emph{Elliptic bimodule $_\calK[E]^\calK$}: This module is rank 1 over the idempotent ring $\bfI$ and has structure map given by
	\[\delta_{2}^{1}(a,1)=1\otimes E(a)\]
	where $E:\calK\rightarrow\calK$ is the algebra morphism satisfying
	\[E(\scrU)=\scrV,\qquad E(\scrV)=\scrU\]
	on the $\bfI_0$ idempotent, and
	\[E(\scrU)=\scrU\scrV^2,\qquad E(\scrV)=\scrV^{-1}\]
	on the $\bfI_1$ idempotent. $E$ also satisfies
	\[E(\sigma)=\tau, \quad \mathrm{and} \quad E(\tau)=\sigma.\]
\end{enumerate}

We conclude this subsection by recalling the definition of the solid-torus type-$A$ module and the type-$AA$ identity bimodule.

\begin{defni}[Solid-torus module]\label{def:solid-torus}
	For $\lambda\in\mathbb Z$, the solid-torus type-$A$ module ${}_{\mathcal K}\mathcal D_\lambda$ has idempotent summands
	\[\bfI_0\cdot\calD_\lambda=\bbF[\scrU,\scrV]\quad \mathrm{and}\quad \bfI_1\cdot \calD_\lambda=\bbF[\scrU,\scrV,\scrV^{-1}].\]
	The only nonzero structure map is $m_2$, defined as follows. If $f\in \bfI_i\cdot \calK\cdot \bfI_i$ and $x\in \bfI_j\cdot \calD_\lambda$, then $m_2(f,x)$ is given by the ordinary multiplication of polynomials $f\cdot x$ if $i=j$, and $0$ otherwise. If $x\in \bfI_0\cdot \calD_\lambda$,
    \[m_2(\sigma,x)=\phi^\sigma(x)\in\bbF[\scrU,\scrV,\scrV^{-1}]=\bfI_1\cdot\calD_\lambda \quad \mathrm{and}\quad m_2(\tau,x)=\scrV^\lambda\cdot\phi^{\tau}(x)\in\bbF[\scrU,\scrV,\scrV^{-1}]=\bfI_1\cdot\calD_\lambda.\] $\calD_\lambda$ is topologized with the cofinite basis topology.
\end{defni}

The type-$A$ module of the solid torus is used to recover the link surgery formula from the bordered surgery formula via the box tensor product.
\begin{defni}[Type-$AA$ identity bimodule]\label{def:identity-bimodule}
	The type-$AA$ identity bimodule is defined by pairing the merge bimodule with the solid-torus module
	${}_{\mathcal K}\mathcal D_0$:
	\[_{\calK|\calK}[\bbI^\Supset]:=_{\calK|\calK}M^\calK\Boxtimes _\calK[\calD_0].\]
\end{defni}

Pairing with the type-$AA$ identity bimodule converts a type-$D$ module over $\mathcal K$ into a type-$A$ module.
\subsection{Bordered surgery modules}\label{subsec:bordered module}
In Section \ref{subsec:surgeryformula}, we reviewed the definition of Manolescu and Ozsv\'{a}th's link surgery formula. Given an $\ell$-component oriented framed link $(L,\Lambda)$ in $S^3$, together with a choice of complete system of hyperboxes for $(L,\Lambda)$, one obtains the link surgery hypercube $\bbX_{\Lambda}(S^3,L)$. In \cite{zemke2021bordered}, Zemke rephrases this choice of auxiliary data by a \emph{system of arcs $\scrA$}, and denotes the corresponding chain complex by $\calC_\Lambda(L,\scrA)$. In this subsection, we review the construction from \cite[Section~8.6]{zemke2021bordered}, which constructs a type-$D$ module $\calX_\Lambda(L,\scrA)^{\calL_\ell}$ over the link surgery algebra $\calL_\ell$ from $\calC_\Lambda(L,\scrA)$.

Let $\xs_1,\dots,\xs_n$ be a free basis of $\CFL(L)$ over $\bbF[\scrU_1,\dots,\scrU_\ell,\scrV_1,\dots,\scrV_\ell]$, the underlying right $\bfE_\ell$-module of $\calX_\Lambda(\calL,\scrA)^{\calL_\ell}$ is
\[\mathrm{Span}_{\bbF}(\xs_1,\dots,\xs_n)\otimes_{\bbF}\bfE_\ell.\]
Thus each basis element $\xs$ gives a generator $\xs^{\varepsilon}$ for every $\varepsilon\in\bbE_{\ell}$.

The structure maps are given as follows. First, for each idempotent $\varepsilon\in\bbE_\ell$, there are internal maps $\delta_{(\varepsilon,\varepsilon)}^1$ from the differentials of $\CFL(L)$. If $\partial(\xs)$ contains a summand of $\ys\cdot f$, where $f\in \bbF[\scrU_1,\dots,\scrU_\ell,\scrV_1,\dots,\scrV_\ell]$, then $\delta_{\varepsilon,\varepsilon}^1(\xs^\varepsilon)$ contains a term of $\ys^\varepsilon\otimes f$.

Recall that, given a pair $\varepsilon<\varepsilon'$, the differential $D_{\varepsilon,\varepsilon'}$ decomposes as a sum $D_{\varepsilon,\varepsilon'}=\sum_{\vec{M}}D_{\vec{M}}.$ For each term $\ys^{\varepsilon'}\cdot f$ appears in $D_{\vec{M}}(\xs^\varepsilon)$, we associate an element $f\cdot t$ in $\bfI_{\varepsilon'}\cdot \calL_\ell\cdot \bfI_{\varepsilon}$ as follows. The algebra element $t$ is given as a tensor product, where the $i$-th factor is $\sigma_i$ if the component $K_i$ of $\vec{M}$ is oriented the same as $L$, and $\tau_i$ if $K_i$ is oriented oppositely. For a component $K_i$ not contained in $M$, the corresponding factor is the identity in $\mathbf I_{\varepsilon_i}\cdot\mathcal K\cdot\mathbf I_{\varepsilon_i}$. With this notation, the structure map $\delta_{\varepsilon,\varepsilon'}^1(\xs^\varepsilon)$ is given by the sum of these summands
\[\ys^{\varepsilon'}\otimes f\cdot t.\] In \cite[Section~8.6]{zemke2021bordered}, these structure maps are shown to satisfy the type-$D$ relations. Hence, $\calX_\Lambda(L,\scrA)^{\calL_\ell}$ is a type-$D$ module over the link surgery algebra $\calL_\ell$.

Choose a component $K\in L$, and view the bordered surgery formula as $\calX_\Lambda(L,\scrA)^{\calL_{\ell-1}\otimes \calK}$. The component corresponding to $K$ can be converted to type-$A$ by box tensoring with the type-$AA$ identity module
\[_\calK \calX_\Lambda(L,\scrA)^{\calL_{\ell-1}}:=\calX_\Lambda(L,\scrA)^{\calL_{\ell-1}\otimes \calK}\Boxtimes _{\calK|\calK}[\bbI^\Supset].\]
One can keep tensoring the type-$AA$ identity modules to get other type-$DA$ bimodules $_{\calL_{\ell_1}} \calX_\Lambda(L,\scrA)^{\calL_{\ell_2}}$.

More importantly, one can recover the link surgery formula by filling in each knot component with the type-$A$ module of solid torus. Given a collection of integers $\Lambda=(\lambda_1,\ldots,\lambda_\ell)$, there is a type-$A$ module
\[_{\calL_\ell}\calD_\Lambda\]
defined by taking the external product of the modules $_\calK[\calD_{\lambda_i}]$. Then, by the definition of $\calX_\Lambda(L,\scrA)^{\calL_\ell}$:
\begin{prop}\cite[Proposition~8.11]{zemke2021bordered}\label{prop:fillingA}
	\[\calC_\Lambda(L,\scrA)\cong \calX_\Lambda(L,\scrA)^{\calL_\ell}\Boxtimes_{L_\ell}\calD_{\{0,\ldots,0\}}.\]
\end{prop}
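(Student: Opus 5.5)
The plan is to unwind both sides of the isomorphism generator by generator, using only the definitions in Subsection~\ref{subsec:bordered module} and Definition~\ref{def:solid-torus}. The statement is essentially definitional, so I expect an explicit chain isomorphism rather than a homotopy equivalence.

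\textbf{Identifying the underlying modules.} The type-$D$ module $\calX_\Lambda(L,\scrA)^{\calL_\ell}$ has underlying module $\mathrm{Span}_\bbF(\xs_1,\dots,\xs_n)\otimes_\bbF\bfE_\ell$. The module ${}_{\calL_\ell}\calD_{\{0,\ldots,0\}}$ is the external product of the ${}_\calK\calD_0$. In idempotent $\varepsilon$ it is therefore $\bigotimes_i E_{\varepsilon_i}$, where $E_0=\bbF[\scrU,\scrV]$ and $E_1=\bbF[\scrU,\scrV,\scrV^{-1}]$; these are exactly the local system modules $E_\varepsilon$ decorating $\betas_\varepsilon$ in the hypercube $\mathscr B_\Lambda$.

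Hence, before completion,
\[\calX\otimes_{\bfE_\ell}\calD \;\cong\; \bigoplus_{\varepsilon}\mathrm{Span}(\xs_i)\otimes E_\varepsilon,\]
which is the vertex-by-vertex description of $\calC_\Lambda(L,\scrA)=\boldsymbol{CF}^-(\alphas,\mathscr B_\Lambda)$. A generator $\xs^\varepsilon\otimes p$ corresponds to $\xs\cdot p$ at vertex $\varepsilon$.

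\textbf{Comparing the differentials.} Since $\calD$ has only $m_2$, the box tensor differential uses just one application of $\delta^1$:
\[\partial^\boxtimes(\xs^\varepsilon\otimes p)=\sum \ys^{\varepsilon'}\otimes m_2(f\cdot t,\,p).\]
I check three kinds of terms.
\begin{itemize}
\item Internal terms, with $t$ an idempotent, give $f\cdot p$. This is the $\CFL$ differential at vertex $\varepsilon$.
\item Terms with a $\sigma_i$ factor act on the $i$-th tensor factor by $\phi^\sigma$, i.e.\ by inclusion, as the $\theta_\sigma$ morphism $\langle\theta_{\sigma,i},\phi^\sigma_i\rangle$ does.
\item Terms with a $\tau_i$ factor act on the $i$-th factor by $\scrV^0\phi^\tau=\phi^\tau$. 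This matches the $\phi^\tau$ decoration on $\theta_{\tau,i}$, since the framing has been absorbed into the construction of $\calX_\Lambda$.
\end{itemize}
The coefficient $f$ attached to $D_{\vec M}$ was defined precisely so that $f\cdot t$ acting on $p$ reproduces the hypercube map applied to $\xs\cdot p$. So the two differentials agree term by term.

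\textbf{Main obstacle: completions and conventions.} The step needing care is that $\widehat\boxtimes$ denotes the completed tensor product, so I must check it matches the direct product used in the link surgery complex. I would argue this from the topologies: $\calD$ carries the cofinite basis topology, and the standard topology on the tensor product is again the cofinite basis topology. Its completion is therefore $\prod_\varepsilon\prod_{\text{monomials}}\bbF\langle\xs_i\rangle$, which is exactly the product completion in Theorem~\ref{theorem:lsf}. The differential is continuous by \cite[Section~7.6]{zemke2021bordered}, so it extends to the completions and the identification passes to them.

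A second convention check is the orientation of $\phi^\tau$: the local-system version $\scrU^i\scrV^j\mapsto\scrU^j\scrV^{2j-i}$ must agree with the algebra relation $\scrU\mapsto\scrV^{-1}$, $\scrV\mapsto\scrU\scrV^2$. Both have $U=\scrU\scrV$ fixed, and I would verify that they coincide after the reparametrization used in \cite{zemke2021bordered}.
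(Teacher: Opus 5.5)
Your proposal is correct and is essentially the paper's argument. The paper gives no separate proof: it asserts the isomorphism holds by the definition of $\calX_\Lambda(L,\scrA)^{\calL_\ell}$ (deferring to Zemke), and your generator-by-generator unwinding is exactly what that means, namely that $m_2(\sigma,-)$ and $m_2(\tau,-)$ on $\calD_0$ encode the $\phi^\sigma$- and $\phi^\tau$-semilinearity of the hypercube maps, and that the cofinite-basis completion matches the direct product. The convention check you defer is immediate, since $\phi^\tau(\scrU^i\scrV^j)=\scrV^{-i}(\scrU\scrV^2)^j=\scrU^j\scrV^{2j-i}$ agrees on the nose with the local-system formula, so no reparametrization is needed.
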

\subsection{Connected sum formula}\label{subsec:connectedsumformula}
An important feature of the bordered surgery formula which allows easier computation is the existence of the connected sum formula.
\begin{Theorem}[\cite{zemke2021bordered}, Theorem 12.1]\label{theorem:connect}
	Suppose that $L_1$ and $L_2$ are two links in $S^3$, which are equipped with system of arcs $\scrA_1$ and $\scrA_2$, respectively. Form their connected sum $L_1\# L_2$ along two distinguished components $K_1\subseteq L_1$ and $K_2\subseteq L_2$. Assume the arc for $K_1$ is alpha-parallel and that the arc for $K_2$ is beta-parallel. Then
	\[\calX_{\Lambda_1+\Lambda_2}(L_1\# L_2,\scrA_1\#\scrA_2)^{\calL_{\ell_1+\ell_2-1}}\cong (\calX_{\Lambda_1}(L_1,\scrA_1)^{\calL_{\ell_1}}\otimes_\bbF\calX_{\Lambda_2}(L_2,\scrA_2)^{\calL_{\ell_2}})\Boxtimes_{\calK|\calK}M^\calK.\]
\end{Theorem}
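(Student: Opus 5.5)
The natural strategy is to compare both sides at the level of hypercubes of Heegaard-diagram data and then read off the bimodule ${}_{\calK|\calK}M^\calK$ as the bookkeeping device that collapses the two coordinates of $K_1$ and $K_2$ into the single coordinate of $K_1\#K_2$.

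\textbf{Step 1: build a connected sum diagram.} Choose meridianal diagrams $\calH_1$ and $\calH_2$ for $L_1$ and $L_2$ compatible with $\scrA_1$ and $\scrA_2$. Form the connected sum diagram $\calH_1\#\calH_2$ by attaching a tube near $z_1\in K_1$ and $w_2\in K_2$, then delete the now-redundant basepoint pair. Because the arc for $K_1$ is alpha-parallel and the arc for $K_2$ is beta-parallel, the special toroidal regions of $K_1$ and $K_2$ fit together into a toroidal region for $K_1\#K_2$. Its special meridian $\beta_0$ and winding curve $\beta_1$ can be taken as handleslides of the two special meridians over one another. The framing of the sum is then $\Lambda_1+\Lambda_2$, as required.

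\textbf{Step 2: Künneth at the vertices.} For each idempotent $\varepsilon$, a neck-stretching argument along the connecting tube should give
\[
\CFL(L_1\#L_2)\simeq \CFL(L_1)\otimes_{\bbF[\scrU,\scrV]}\CFL(L_2),
\]
where the tensor identifies the variables of $K_1$ and $K_2$. On the bimodule side this identification is exactly $\delta_2^1$ of $M$: an element $a_1|a_2$ in matching idempotents $(0,0)$ or $(1,1)$ is sent to $a_1a_2$. Mixed idempotents $(0,1)$ and $(1,0)$ are killed, which matches the fact that the summed component has only two states.

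\textbf{Step 3: the length-one and length-two maps.} For the hypercube edge of $K_1\#K_2$, I would show that after neck stretching the $\sigma$-triangle (respectively $\tau$-triangle) for the summed component degenerates into a pair consisting of a $\sigma$-triangle (respectively $\tau$-triangle) in $\calH_1$ and one in $\calH_2$. On the tensor-product side these two triangles are applied sequentially, the first sending $(0,0)\to(1,0)$ or $(0,0)\to(0,1)$ and the second completing the step to $(1,1)$. Such a length-two chain is precisely what $\delta_3^1(1|\sigma,\sigma|1,i_0)$ and $\delta_3^1(1|\tau,\tau|1,i_0)$ record. Higher-dimensional hypercube maps involving other components split in the same way.

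\textbf{Main obstacle.} The hard step is the $\tau$ maps and any higher polygon counts. The $\tau$ map uses the basepoint-moving data of the arc system, and in general its composite does not split: this failure is what produces the extra $\delta_5^1$ term in the pair-of-pants bimodule $W_{\alpha\beta,\alpha}$. I would try first to use the alpha-parallel/beta-parallel hypothesis to show that the two basepoint-moving homotopies occur on disjoint sides of the neck, one commuting with the alpha handlebody and the other with the beta handlebody. If so, the resulting polygons factor as products with no correction terms beyond $\delta_3^1$.

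\textbf{Step 4: assemble the homotopy equivalence.} Finally, I would verify compatibility with the $\spin^c$ and Alexander gradings, and continuity with respect to the !-topologies, so that the homotopy equivalences pass to completions and yield an isomorphism of type-$D$ Alexander modules.
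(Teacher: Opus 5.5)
The paper does not prove this statement; it quotes it from Zemke's bordered surgery paper. Your outline therefore has to stand on its own, and it has a genuine gap exactly where the theorem has content.

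\textbf{The decisive step is left open.} Your diagnosis in the ``main obstacle'' paragraph is sound as a heuristic. The co-core arc $\scrA_1\#\scrA_2$ is what should let the flip of $K_1\#K_2$ factor as the flip of $K_1$ followed by that of $K_2$. The failure of this factorization for an alpha-parallel arc on the summed component is what the $\delta_5^1$ term of $W_{\alpha\beta,\alpha}$ records. But you stop at ``I would try \ldots\ If so.'' Nothing in Steps 1--4 shows the following: the $\tau$-edge of the summed component, and every higher hypercube map involving it together with other components of $L_1$ and $L_2$, agrees up to a coherent homotopy equivalence with the ordered composites encoded by $\delta_2^1$ and by $\delta_3^1(a|b\tau,c\tau|d,i_0)=i_1\otimes abc\tau d$, with no $\Phi\otimes\Psi$-type correction.

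\textbf{Step 1 misplaces the hypothesis.} Being alpha- or beta-parallel is a property of the arcs in $\scrA_1,\scrA_2$, and these enter only through the $\tau$-type maps. It has nothing to do with the special toroidal regions ``fitting together.'' After tubing at $z_1$ and $w_2$, the concatenated shadow of $K_1\#K_2$ meets both $\beta_{0,1}$ and $\beta_{0,2}$. ``Handlesliding the special meridians over one another'' does not specify a meridianal diagram and hypercube of attaching curves that computes $\calX_{\Lambda_1+\Lambda_2}(L_1\#L_2,\scrA_1\#\scrA_2)$. Still less does it give one in which the arc data visibly splits across the neck.

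\textbf{The degeneration step does not give what you assert, even for the $\sigma$-maps.} Once $z_1$ and $w_2$ are deleted, the tube region carries no weight, so polygons cross it with arbitrary multiplicity. Stretching the neck produces pairs of polygons subject to a matching condition at the two attaching points. For the $(k+2)$-gons with $k\geq 2$ that define the higher hypercube maps, it also produces a fibre product over the moduli space of conformal structures. A pair of polygons over a single domain is a simultaneous count, not a composite of two maps passing through a mixed idempotent. So ``applied sequentially'' and ``higher-dimensional hypercube maps \ldots\ split in the same way'' are assertions, not consequences.

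Converting simultaneous counts into sequential ones requires associativity homotopies, assembled coherently over the whole cube: quadrilateral counts already for the $\sigma$-edge, and higher polygons beyond it. That bookkeeping is what must produce the asymmetry built into $M$. The map $\delta_3^1(1|\sigma,\sigma|1,i_0)=i_1\otimes\sigma$ counts only the path through the idempotent $(1,0)$; the reverse order contributes nothing. Your ``$(0,0)\to(1,0)$ or $(0,0)\to(0,1)$'' cannot be the right count: if both orders contributed, the two copies of $\sigma_1\otimes\sigma_2$ would cancel mod~$2$.
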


By Proposition \ref{prop:fillingA}, we can fill in the type-$A$ module to recover the link surgery formula. Apply it to the connected sum formula, we get the following:
\begin{align*}
	\calC_{\Lambda_1+\Lambda_2}(L_1\#L_2,\scrA_1\#\scrA_2)&\cong\calX_{\Lambda_1+\Lambda_2}(L_1\# L_2,\scrA_1\#\scrA_2)^{\calL_{\ell_1+\ell_2-1}}\Boxtimes _{\calL_{\ell_1+\ell_2-1}}\calD_{\{0,\ldots,0\}},\\ &\cong (\calX_{\Lambda_1}(L_1,\scrA_1)^{\calL_{\ell_1}}\otimes_\bbF\calX_{\Lambda_2}(L_2,\scrA_2)^{\calL_{\ell_2}})\Boxtimes_{\calK|\calK}M^\calK \Boxtimes _{\calL_{\ell_1+\ell_2-1}}\{_{\calK}\calD_0\otimes _{\calL_{\ell_1+\ell_2-2}}\calD_{\{0,\ldots,0\}}\},\\
	&\cong (\calX_{\Lambda_1}(L_1,\scrA_1)^{\calL_{\ell_1}}\otimes_\bbF\calX_{\Lambda_2}(L_2,\scrA_2)^{\calL_{\ell_2}})\Boxtimes _{\calK\otimes\calK}[\bbI^\Supset]\Boxtimes _{\calL_{\ell_1+\ell_2-2}}\calD_{\{0,\ldots,0\}},\\
	&\cong \calX_{\Lambda_{K_1}}(S^3_{\Lambda_{L_1\setminus K_1}}(L_1\setminus K_1),K_1,\scrA_{K_1})^{\calK}\Boxtimes _\calK \calX_{\Lambda_{K_2}}(S^3_{\Lambda_{L_2\setminus K_2}}(L_2\setminus K_2),K_2,\scrA_{K_2}).
\end{align*}
Note that the arc on the component $K_1\#K_2$ is the co-core of the connected sum band, and is neither alpha-parallel nor beta-parallel. Therefore, to perform connected sum formula for more than two components, we need to use the pair-of-pants bimodule instead. Let $\scrA_\alpha$ be the system of arcs for $L_1\#L_2$ which uses an alpha-parallel arc for $K_1\#K_2$. Then
\begin{Theorem}[\cite{zemke2021bordered}, Theorem 15.1]\label{Thm:pop}
	    Under the hypotheses of Theorem~\ref{theorem:connect}, there is a homotopy equivalence
		\[\calX_{\Lambda_1+\Lambda_2}(L_1\# L_2,\scrA_\alpha)^{\calL_{\ell_1+\ell_2-1}}\cong (\calX_{\Lambda_1}(L_1,\scrA_1)^{\calL_{\ell_1}}\otimes_\bbF\calX_{\Lambda_2}(L_1,\scrA_2)^{\calL_{\ell_2}})\Boxtimes_{\calK|\calK}W_{\alpha\beta,\alpha}^{\calK}.\]
\end{Theorem}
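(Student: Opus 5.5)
The plan is to deduce the statement from the connected sum formula, Theorem~\ref{theorem:connect}, by changing the arc system on the summed component. Theorem~\ref{theorem:connect} already identifies $\calX_{\Lambda_1+\Lambda_2}(L_1\#L_2,\scrA_1\#\scrA_2)$ with $(\calX_{\Lambda_1}(L_1,\scrA_1)\otimes_\bbF\calX_{\Lambda_2}(L_2,\scrA_2))\Boxtimes{}_{\calK|\calK}M^\calK$. The only difference from the desired statement is that the arc for $K_1\#K_2$ is the co-core of the band rather than an alpha-parallel arc. So I will do three things, in order:
\begin{enumerate}
\item Write down the effect of moving the arc of $K_1\#K_2$ from the co-core to an alpha-parallel position.
\item Show that this effect is local to the band region.
\item Absorb it into the merge bimodule, which turns $M^\calK$ into $W_{\alpha\beta,\alpha}^\calK$.
\end{enumerate}

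\textbf{Step 1 (arc change).} For a single component, I expect two systems of arcs $\scrA,\scrA'$ differing in one arc to give homotopy equivalent type-$D$ modules. The equivalence should be given by a morphism whose linear part is the identity, with higher corrections on the length-one and length-two hypercube maps. For the $\tau$-direction, these corrections count polygons whose boundary crosses the region swept out by the arc isotopy; the $\sigma$-maps are unchanged because the $\sigma$-localization does not see the arc. I would realize this concretely on a genus-one connected-sum Heegaard diagram. Take the connected sum of meridianal diagrams for $L_1$ and $L_2$ along the special toroidal regions of $K_1$ and $K_2$. The co-core arc and the alpha-parallel arc then differ by sliding across one of the two special toroidal regions.

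\textbf{Step 2 (neck stretching).} Next I stretch the necks separating the $L_1$ and $L_2$ pieces from the band region. In the limit, every polygon counted by a hypercube map of $L_1\#L_2$ splits into three parts:
\begin{itemize}
\item a polygon in the $L_1$ piece, giving an input from $\calX(L_1)$;
\item a polygon in the $L_2$ piece, giving an input from $\calX(L_2)$;
\item a model polygon in the genus-one band region.
\end{itemize}
This is the same mechanism that produces the merge bimodule. For the co-core arc, the model counts give exactly $\delta_2^1$ (multiplication in matching idempotents) and $\delta_3^1$ (sending $(1|\sigma,\sigma|1)$ to $\sigma$ and $(1|\tau,\tau|1)$ to $\tau$). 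For the alpha-parallel arc, the model region acquires additional index-appropriate polygons. These use a pair of $\tau$-inputs preceded by two $\bfI_1$-idempotent inputs $a|b$ and $a'|b'$, which yields a $\delta_5^1$ term. No other new terms should appear, by an Alexander/Maslov grading count together with the fact that the arc change only affects $\tau$-localizations.

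\textbf{Step 3 (the $\delta_5^1$ coefficient) — the main obstacle.} The delicate point is computing this coefficient exactly as $\scrV^{-1}\partial_\scrU(ab)\,b'(\scrU\partial_\scrU+\scrV\partial_\scrV)(a')\tau$. The local-system rule~\eqref{eq:localsystem} inserts powers $\scrV^{m}$ according to the intersection number $m$ of the polygon boundary with the shadow. Summing over the one-parameter family of model polygons with varying winding across the arc turns these monomials into formal derivatives, with counts taken mod $2$.

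My first approach is the direct model computation on the torus in Figure~\ref{fig:localpic}, doubled along the band. If that becomes unwieldy, the fallback is algebraic: show that any strictly unital split $DA$-bimodule extending $M^\calK$ and satisfying the required gradings has only a $\delta_5^1$ correction, and that this correction is constrained by the $A_\infty$ relations up to homotopy. Then fix the remaining ambiguity by checking the case $L_1=L_2=$ two-component unlink, where both sides can be computed independently via Proposition~\ref{prop:fillingA}.

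\textbf{Finishing.} Finally I will verify that $W_{\alpha\beta,\alpha}$ satisfies the $DA$ relations and that the resulting equivalence is continuous for the tree topology, so that it survives completion. Both checks are routine once the structure maps are fixed.
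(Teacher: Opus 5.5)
The paper gives no proof of this statement. It is imported from Zemke, and the only property used later is that $\delta_5^1$ fires only when fed internal $\bfI_1$-differentials of the summands. So your outline has to stand on its own. Starting from Theorem~\ref{theorem:connect} and accounting for the arc change on $K_1\#K_2$ is a reasonable plan, but there is a genuine gap exactly where the content of the theorem lies: the $\delta_5^1$ term. You defer its computation, and the fallback you offer cannot produce it, for two reasons.

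First, the merge bimodule $M^{\calK}$ (that is, $\delta_5^1=0$) and $W^{\calK}_{\alpha\beta,\alpha}$ are both strictly unital split $DA$-bimodules with the same $\delta_2^1$ and $\delta_3^1$. So the structure relations admit both and cannot single out the correction. Second, your calibrating example is blind to it. For unknots and unlinks the type-$D$ modules have no internal differential, so after the outputs $\tau|1$ and $1|\tau$ nothing further is fed into the bimodule. Hence $\delta_5^1$ never fires, and $M$ and $W$ give identical answers there. A separating test needs, after both $\tau$'s, an $\bfI_1$-idempotent coefficient $a'$ with $(\scrU\partial_\scrU+\scrV\partial_\scrV)a'\neq0$, followed by one with nonzero $\partial_\scrU$. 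The value on such inputs is precisely what must be computed geometrically.

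The geometric steps do not close the gap either. Step 1 aims at the wrong target. If changing the arc gave a homotopy equivalence $\calX(L_1\#L_2,\scrA_\alpha)\simeq\calX(L_1\#L_2,\scrA_1\#\scrA_2)$, then Theorem~\ref{theorem:connect} would already give the statement with $M$ in place of $W$. You would still need the separate, unproved fact $(\calX_1\otimes_\bbF\calX_2)\Boxtimes M\simeq(\calX_1\otimes_\bbF\calX_2)\Boxtimes W$, where $\calX_i$ denotes the summand modules. What is actually required is a formula for the new $\tau$-containing structure maps in terms of the summands.

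Nor is the arc change local to a band region. By your own description, the co-core and $\alpha$-parallel arcs differ by a slide across a special toroidal region of one summand. That region is in general crossed by the $\alpha$-curves of that summand, so it cannot be cut off by a neck disjoint from the attaching curves, and polygons of the summand itself pass through the swept region. The correction therefore depends on how the summands' polygons meet the basepoints, which is where formal derivatives of the summands' coefficients come from.

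The missing idea is a proof that this dependence runs only through the basepoint multiplicities recorded in the algebra outputs $a,b,a',b'$, so that it can be packaged as a higher operation on the bimodule. A grading count neither establishes this nor rules out further terms.
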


\subsection{Type-$DA$ bimodule of the negative Hopf link Floer complex}\label{subsec:Hopflink}
In this subsection, we review the type-$DA$ bimodule of the $(\lambda_1,0)$-framed negative Hopf link. For the computation, see \cite[Section 17]{zemke2021bordered}. As shown in Figure \ref{fig:negHopf}, the bimodule under consideration has type-$A$ structure on the component $L_1$, and type-$D$ structure on $L_2$. Moreover, the arc system is chosen so that $L_1$ is alpha-parallel, and $L_2$ is beta-parallel.

\begin{figure}[h]
	\centering
	
	\begin{tikzpicture}[scale=1]
		
		\draw[postaction={decorate},
		decoration={
			markings,
			mark=at position 0.67 with {\arrow{>}}
		}] (0,0) ++(-45:1) arc (-45:295:1);
		\draw [postaction={decorate},
		decoration={
			markings,
			mark=at position 0.15 with {\arrow{<}}
		}] (1,0) ++(130:1) arc (130:475:1);
		
		\node at (-0.1,1.5) {$(L_1,\lambda_1)$};
		\node at (1.3,1.5) {$(L_2,0)$};
		
		\node at (0,-1.5) {$(A,\alpha)$};
		\node at (1.2,-1.5) {$(D,\beta)$};
		
	\end{tikzpicture}
	\caption{Negative Hopf link}
	\label{fig:negHopf}
\end{figure}
The type-$DA$ bimodule $\abZ_{(\lambda_1,0)}$ is defined over the idempotent ring $\bfE_2=\bfI\otimes_{\bbF}\bfI$. The underlying $(\mathbf I,\mathbf I)$-module is independent of the framing, and we denote the summand $\bfI_i\cdot\abZ_{(\lambda_1,0)}\cdot\bfI_j$ by $\overline{\calZ}_{i,j}$, suppressing the framing parameters from the notation.
The framing dependence is recorded in the type-$DA$ structure maps described below. The module is generated by two elements $\ve{a}$ and $\ve{d}$, such that the Alexander gradings $A=(A_{1},A_{2})$ and Maslov gradings $\gr=(\mathrm{gr}_{\ws},\mathrm{gr}_{\zs})$ are
\begin{align*}
	A(\ve{a})=(0,1)+(-\frac{1}{2},-\frac{1}{2}), \ \ & \gr(\ve{a})=(0,0),\\
	A(\ve{d})=(1,0)+(-\frac{1}{2},-\frac{1}{2}), \ \ & \gr(\ve{d})=(0,0).
\end{align*}

For each summand, we have
\begin{enumerate}
	\item $\overline{\calZ}_{0,0}$ is generated by $\langle\ve{a}\rangle[\scrU_{1}]\oplus\langle\ve{d}\rangle[\scrV_1]$,
	\item $\overline{\calZ}_{1,0}$ is generated by $\langle\ve{d}\rangle[\scrV_1,\scrV^{-1}_1]$,
	\item $\overline{\calZ}_{0,1}$ is generated by $\langle\ve{a}\rangle[\scrU_{1}]\oplus\langle\ve{d}\rangle[\scrV_1]$,
	\item $\overline{\calZ}_{1,1}$ is generated by $\langle\ve{d}\rangle[\scrV_1,\scrV^{-1}_1]$,
\end{enumerate}
where $\scrU_1$ and $\scrV_1$ are the corresponding elements in the knot surgery algebra $\calK_1$. $\langle\ve{a}\rangle[\scrU_1]$ denotes a copy of $\bbF[\scrU_1]$ generated by $\ve{a}$, and similarly for the others. We view them as direct sums of copies of $\bbF$ and equip them with cofinite basis topology defined in Subsection \ref{subsec:lineartopology}.
The structure maps decompose schematically as follows.
\[
\delta_1^1=\begin{tikzcd}[labels=description, row sep=1.1cm,column sep=.8cm]
	\overline{\calZ}_{0,0} \ar[d,"{}^{L_2}\! f_1^1+{}^{-L_2}\! f_1^1"] & \overline{\calZ}_{1,0}\ar[d, "{}^{L_2}\! g_1^1+{}^{-L_2}\! g_1^1"]\\
	\overline{\calZ}_{0,1}  &\overline{\calZ}_{1,1} 
\end{tikzcd}\qquad \qquad 
\delta_2^1=
\begin{tikzcd}[labels=description, row sep=1.1cm,column sep=2.3cm]
	\overline{\calZ}_{0,0} \ar[loop left, "m_2^1"] \ar[r,"{}^{L_1}\! p_2^1+{}^{-L_1}\! p_2^1"] & \overline{\calZ}_{1,0} \ar[loop right, "m_2^1"]
	\\
	\overline{\calZ}_{0,1} \ar[r, "{}^{L_1}\! q_2^1+{}^{-L_1}\! q_2^1"]  \ar[loop left, "m_2^1"] &\overline{\calZ}_{1,1} \ar[loop right, "m_2^1"] 
\end{tikzcd} 
\]

All higher structure maps vanish: $\delta_j^1=0$ for $j\geq3$.
\begin{prop}[\cite{zemke2021bordered}]\label{prop:hopf}
For the $(\lambda_1,0)$ framed negative Hopf link. The structure maps of the type-$DA$ bimodule $\abZ_{(\lambda_1,0)}$ are as follows:
\begin{enumerate}
	\item For the summand $\overline{\calZ}_{0,0}$ and $\overline{\calZ}_{0,1}$, the  $m_2^1$ is given by
          \begin{enumerate}
	\item $m_2^1(\scrU_1,\scrU_1^n\ve{a})=\scrU_1^{n+1}\ve{a}\otimes 1$.
	\item $m_2^1(\scrV_1,\scrU_1^n\ve{a})=
	      \begin{cases}
		  \scrU_1^{n-1}\ve{a}\otimes \scrU_2\scrV_2& \text{ if } n>0\\
		  \ve{d}\otimes \scrV_2& \text{ if } n=0.
		  \end{cases}$
	\item $m_2^1(\scrV_1, \scrV_1^m \ve{d})=\scrV_1^{m+1} \ve{d}\otimes 1$.
	\item $m_2^1(\scrU_1, \scrV_1^m \ve{d})=
	      \begin{cases}
	      \scrV_1^{m-1}\ve{d}\otimes \scrU_2 \scrV_2 &\text{ if } m>0\\
		  \ve{a}\otimes \scrU_2& \text{ if } m=0.
		  \end{cases}$
	      \end{enumerate}
          For the summand $\overline{\calZ}_{1,0}$ and $\overline{\calZ}_{1,1}$, the  $m_2^1$ is given by 
          $$m_2^1(\scrU_1^i \scrV_1^j, \scrV_1^n\ve{d})=\scrV_1^{j+n-i}\ve{d}\otimes \scrU_2^{i}\scrV_2^{i}.$$
	\item The maps ${}^{L_1}\! p_2^1$ and ${}^{L_1}\! q_2^1$ are given by the same formulas as each other, as are ${}^{-L_1}\! p_2^1$ and ${}^{-L_1}\! q_2^1$. They are determined by the following formulas:
			\begin{enumerate}
				\item ${}^{L_1}\!p_2^1(\sigma_1,\scrU_1^i \ve{a})=\scrV_1^{-i-1} \ve{d} \otimes \scrU_2^i \scrV_2^{i+1} $ and ${}^{L_1}\! p_2^1(\sigma_1,\scrV_1^j\ve{d})=\scrV_1^j \ve{d}\otimes 1$.
				\item ${}^{-L_1}\! p_2^1(\tau_1,\scrU_1^i\ve{a})=\scrV_1^{-i-1+\lambda_1} \ve{d}\otimes 1$ and ${}^{-L_1}\! p_2^1(\tau_1,\scrV_1^j \ve{d})=\scrV_1^{j+\lambda_1} \ve{d}\otimes \scrU_2^{j+1} \scrV_2^j.$
				\item ${}^{L_1} \!p_2^1(\tau_1,-)=0$ and ${}^{-L_1} \! p_2^1(\sigma_1,-)=0$.
			\end{enumerate}
			\item The maps for $\pm L_2$ are as follows:
			\begin{enumerate}
				\item ${}^{L_2}\! f_1^1(\scrU_1^i\ve{a})=\scrU_1^i \ve{a}\otimes \sigma_2$ and ${}^{L_2}\! f_1^1(\scrV_1^j \ve{d})=\scrV_1^j \ve{d}\otimes \sigma_2$.
				\item ${}^{-L_2} \!f_1^1(\scrU_1^i \ve{a})=\scrU_1^{i+1} \ve{a} \otimes \tau_2$ and
				\[
				{}^{-L_2}\! f_1^1(\scrV_1^j\ve{d})=
				\begin{cases}
					\ve{a}\otimes \scrV_2^{-1}\tau_2 &\text{ if } j=0\\
					\scrV_1^{j-1} \ve{d}\otimes \tau_2& \text{ if } j>0.
				\end{cases}
				\]
				\item  ${}^{L_2}\!g_1^1(\scrV_1^i \ve{d})=\scrV_1^i \ve{d}\otimes \sigma_2$.
				\item ${}^{-L_2}\!g_1^1(\scrV_1^i \ve{d})=\scrV_1^{i-1}\ve{d}\otimes \tau_2.$
			\end{enumerate}
		\end{enumerate}
	\end{prop}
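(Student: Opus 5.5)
Proposition~\ref{prop:hopf} is a computation of the bordered invariant of a single small link, so I plan to obtain it from the definition of $\calX_\Lambda(L,\scrA)^{\calL_2}$ in Subsection~\ref{subsec:bordered module}, followed by conversion of the $L_1$ factor to type $A$ with the identity bimodule ${}_{\calK|\calK}[\bbI^\Supset]$. The first step is to fix a genus-zero (or genus-one) meridianal diagram of the negative Hopf link. I take $L_1$ with an alpha-parallel arc and $L_2$ with a beta-parallel arc. With these choices $\CFL$ is generated over $R_2$ by two intersection points $\ve a,\ve d$, with differential $\partial\ve a=\scrU_2\ve d+\scrU_1\ve a$-type terms. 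I read off the gradings from the standard computation, which gives $A(\ve a)=(-\tfrac12,\tfrac12)$, $A(\ve d)=(\tfrac12,-\tfrac12)$ and Maslov grading $0$. Next I write down the hypercube maps. The length-one maps are $\sigma$-type inclusions and $\tau$-type flip maps, read off from the two-component link surgery complex. The flips are the maps $\Phi^{\pm K_i}$ of Manolescu--Ozsv\'ath, and for the Hopf link they are computed explicitly in \cite[Section~8]{manolescu2010heegaard}. The length-two diagonal map vanishes for grading reasons. Together these give the type-$D$ module $\calX_{(\lambda_1,0)}^{\calK_1\otimes\calK_2}$, which has four idempotent copies of $\ve a,\ve d$.

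The main step is the box tensor product with ${}_{\calK|\calK}[\bbI^\Supset]={}_{\calK|\calK}M^\calK\Boxtimes{}_\calK\calD_0$ on the first factor. The result is large: it carries polynomial towers $\bbF[\scrU_1,\scrV_1]$ and $\bbF[\scrU_1,\scrV_1^{\pm1}]$ on each generator. I will simplify it with the homological perturbation lemma (Lemma~\ref{lem:hompert2} in its hypercube form). The internal differential $\partial\ve a\ni\scrU_1\ve a$-like terms, paired against $\calD_0$, cancel most of the towers. In idempotent $0$ they leave $\langle\ve a\rangle[\scrU_1]\oplus\langle\ve d\rangle[\scrV_1]$, and in idempotent $1$ only $\langle\ve d\rangle[\scrV_1^{\pm1}]$ survives, since $\ve a$ becomes acyclic once $\scrV_1$ is invertible. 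I will choose $h^1$ to be the inverse of the cancelled arrows, and $i^1,\pi^1$ to be the inclusion and projection of these monomial subspaces. The transferred maps are then given by the zigzag diagrams in \eqref{eq:DAperturb}. Each monomial rule follows from one of these zigzags. For example, in $m_2^1(\scrV_1,\ve a)=\ve d\otimes\scrV_2$, the algebra action pushes $\ve a$ off the retained subspace, $h^1$ returns it along the $\scrV_2$-weighted differential, and $\pi^1$ lands on $\ve d$. The framing $\lambda_1$ enters only through $\calD_{\lambda_1}$, via the factor $\scrV^{\lambda_1}$ in $m_2(\tau,-)$. This accounts for the shifts $\scrV_1^{\lambda_1}$ in ${}^{-L_1}p_2^1$.

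After this, the maps ${}^{\pm L_2}f_1^1$ and $g_1^1$ come directly from the $\sigma_2,\tau_2$ components of the type-$D$ structure. They pass through the perturbation with no $\calK_1$ input, so I only need to compose them with $i^1,\pi^1$. The relation $\tau\scrU=\scrV^{-1}\tau$ explains the $\scrV_2^{-1}\tau_2$ that appears at $j=0$.

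I expect the main obstacle to be showing that all $\delta^1_j$ with $j\ge3$ vanish, and that no further zigzag terms contribute. I will try to handle this with a grading argument. Every generator has Maslov grading $0$ and $h^1$ shifts the grading by $+1$, so any zigzag containing at least two $h^1$'s produces an output of the wrong Maslov or Alexander bigrading relative to the possible algebra outputs. This should leave only the length-one and length-two terms listed.
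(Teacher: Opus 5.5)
\textbf{The chain-level input is wrong.} The paper gives no proof of this proposition; it cites Zemke's computation in \cite{zemke2021bordered}. Your outline follows that route: take the type-$D$ module of the Hopf link with an $\alpha$-parallel arc on $L_1$ and a $\beta$-parallel arc on $L_2$, convert the $L_1$ factor with ${}_{\calK|\calK}[\bbI^\Supset]$, and reduce by homological perturbation. But the complex you start from cannot be right, and the perturbation step depends on it.

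The link Floer complex of the Hopf link is not a two-generator complex. $\widehat{\mathrm{HFL}}$ of the Hopf link has rank $4$, so any free model over $R_2$ needs at least four generators. Your own data also rule out your model. You put $\ve{a}$ and $\ve{d}$ both in $\gr_{\ws}=0$, and every monomial has even $\gr_{\ws}$, so a differential of degree $-1$ must vanish identically. A term $\scrU_1\ve{a}$ in $\partial\ve{a}$ is impossible in any case, since $\partial$ preserves $A$.

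A two-generator complex with zero differential has free homology. The proposition itself encodes the relations $\scrV_1\ve{a}\sim\scrV_2\ve{d}$ and $\scrU_1\ve{d}\sim\scrU_2\ve{a}$ (items (1)(b) and (1)(d)). Pairing your model with $\calD_0$ would leave full towers $\ve{a}\otimes\bbF[\scrU_1,\scrV_1]$ and $\ve{d}\otimes\bbF[\scrU_1,\scrV_1]$ with nothing to cancel, rather than $\langle\ve{a}\rangle[\scrU_1]\oplus\langle\ve{d}\rangle[\scrV_1]$.

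\textbf{The correct input.} What you need, in the conventions forced by (1)(b) and (1)(d), is the four-generator complex with $\partial\ve{b}=\scrV_1\ve{a}+\scrV_2\ve{d}$, $\partial\ve{c}=\scrU_1\ve{d}+\scrU_2\ve{a}$, and $\partial\ve{a}=\partial\ve{d}=0$. After pairing with $\calD_0$, the arrows $\scrU_1^i\scrV_1^j\ve{b}\to\scrU_1^i\scrV_1^{j+1}\ve{a}$ and $\scrU_1^i\scrV_1^j\ve{c}\to\scrU_1^{i+1}\scrV_1^j\ve{d}$ have $\calK_2$-coefficient $1$. Cancelling them, with $h^1$ and $\pi^1$ chosen as in Lemma~\ref{lem:red}, leaves exactly $\langle\ve{a}\rangle[\scrU_1]\oplus\langle\ve{d}\rangle[\scrV_1]$ in $\bfI_0$. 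In $\bfI_1$, where $\scrV_1$ is a unit, the whole $\ve{a}$-tower cancels against the $\ve{b}$-tower and $\langle\ve{d}\rangle[\scrV_1^{\pm1}]$ remains.

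Each stated formula is then a zigzag through $\ve{b}$ or $\ve{c}$. For example, for $n>0$:
\[
\scrU_1^n\scrV_1\ve{a}\xrightarrow{h}\scrU_1^n\ve{b}\xrightarrow{\partial}\scrU_1^n\ve{d}\otimes\scrV_2\xrightarrow{h}\scrU_1^{n-1}\ve{c}\otimes\scrV_2\xrightarrow{\partial}\scrU_1^{n-1}\ve{a}\otimes\scrU_2\scrV_2 .
\]
In $\bfI_1$, iterating $\scrU_1\ve{d}\rightsquigarrow\ve{a}\otimes\scrU_2$ and $\ve{a}\rightsquigarrow\scrV_1^{-1}\ve{d}\otimes\scrV_2$ gives the $\bfI_1$ formula in (1) and ${}^{L_1}p_2^1(\sigma_1,\scrU_1^i\ve{a})$.

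\textbf{Two further steps need repair.} First, the vanishing of the length-two hypercube map is not a grading statement. In the correct model, a $\sigma_1\sigma_2$-component $\ve{c}\mapsto\scrV_2^{-1}\ve{a}$ has exactly the right bidegree. Gradings also cannot see the arc system, yet the paper notes that the $\alpha\alpha$ bimodule ${}_{\calK_1}\calZ^{\calK_2}$ carries extra diagonal $\delta^1_3$ terms. So the vanishing for ${}_{\calK_1}\overline{\calZ}^{\calK_2}$ must come from the actual hypercube computation with the $\alpha$/$\beta$-parallel arcs.

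Second, your count of $h^1$'s does not control $\delta^1_3$, whose perturbation terms in \eqref{eq:DAperturb} contain a single $h^1$. In your model there is also no odd-degree part for $h^1$ to land in. With four generators a parity argument does work:
\begin{itemize}
\item $h^1$ takes values in the $\ve{b},\ve{c}$-towers;
\item the type-$A$ actions, including the $\sigma_1$ and $\tau_1$ components, preserve the $\bbZ/2$ Maslov grading;
\item $\pi^1$ kills the $\ve{b},\ve{c}$-towers.
\end{itemize}
Hence every transferred term containing an $h^1$ vanishes. The transferred $\delta^1_j$ for $j\ge3$ then reduces to $\pi^1\circ\delta^1_j\circ i^1$ of the unreduced bimodule, which must still be checked directly.
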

	
The summand $\abZ_{(0,0)}\cdot \ve{I}_{0}$ is described in Figure \ref{fig:D0A}, where an arrow decorated with $a|b$ from $\xs$ to $\ys$ means that $\delta_2^1(a,\xs)=\ys\otimes b$ and we set $U_2=\scrU_2\scrV_2$. Note that $\abZ_{(0,0)}\cdot \ve{I}_{1}$ is the localization of $\abZ_{(0,0)}\cdot \ve{I}_{0}$ at $\scrV_{2}$. 
\begin{figure}[h]
\centering
\begin{tikzcd}
	& {\scrU_{1}^{2}\mathbf{a}} && {\scrU_{1}\mathbf{a}} && {\mathbf{a}} && {\mathbf{d}} && {\scrV_{1}\mathbf{d}} \\
	\cdots &&&&&& {} &&&& \cdots \\
	& {\scrV_{1}^{-3}\mathbf{d}} && {\scrV_{1}^{-2}\mathbf{d}} && {\scrV_{1}^{-1}\mathbf{d}} && {\mathbf{d}} && {\scrV_{1}\mathbf{d}}
	\arrow["{\scrV_{1}|U_{2}}"{description}, bend left=10, from=1-2, to=1-4]
	\arrow["{{\tau_{1}|1}}"{description}, bend right=30, from=1-2, to=3-2]
	\arrow["{\sigma_{1}|U_{2}^{2}\scrV_{2}}"{description}, bend left=30, from=1-2, to=3-2]
	\arrow["{\scrU_{1}|1}"{description}, shift left=3, bend left=10, from=1-4, to=1-2]
	\arrow["{\scrV_{1}|U_{2}}"{description}, shift right, bend left=10, from=1-4, to=1-6]
	\arrow["{{\tau_{1}|1}}"{description}, bend right=30, from=1-4, to=3-4]
	\arrow["{\sigma_{1}|U_{2}\scrV_{2}}"{description}, bend left=30, from=1-4, to=3-4]
	\arrow["{\scrU_{1}|1}"{description}, shift left=3, bend left=10, from=1-6, to=1-4]
	\arrow["{\scrV_{1}|\scrV_{2}}"{description}, bend left=10, from=1-6, to=1-8]
	\arrow["{{\tau_{1}|1}}"{description}, bend right=30, from=1-6, to=3-6]
	\arrow["{\sigma_{1}|\scrV_{2}}"{description}, bend left=30, from=1-6, to=3-6]
	\arrow["{\scrU_{1}|\scrU_{2}}"{description}, shift left=3, bend left=10, from=1-8, to=1-6]
	\arrow["{\scrV_{1}|1}"{description}, bend left=10, from=1-8, to=1-10]
	\arrow["{{\tau_{1}|\scrU_{2}}}"{description}, bend right=30, from=1-8, to=3-8]
	\arrow["{\sigma_{1}|1}"{description}, bend left=30, from=1-8, to=3-8]
	\arrow["{\scrU_{1}|U_{2}}"{description}, shift left=3, bend left=10, from=1-10, to=1-8]
	\arrow["{{\tau_{1}|\scrU_{2}U_{2}}}"{description}, bend right=30, from=1-10, to=3-10]
	\arrow["{\sigma_{1}|1}"{description}, bend left=30, from=1-10, to=3-10]
	\arrow["{\scrV_{1}|1}"{description}, bend left=10, from=3-2, to=3-4]
	\arrow["{\scrU_{1}|U_{2}}"{description}, shift left=3, bend left=10, from=3-4, to=3-2]
	\arrow["{\scrV_{1}|1}"{description}, bend left=10, from=3-4, to=3-6]
	\arrow["{\scrU_{1}|U_{2}}"{description}, shift left=3, bend left=10, from=3-6, to=3-4]
	\arrow["{\scrV_{1}|1}"{description}, bend left=10, from=3-6, to=3-8]
	\arrow["{\scrU_{1}|U_{2}}"{description}, shift left=3, bend left=10, from=3-8, to=3-6]
	\arrow["{\scrV_{1}|1}"{description}, bend left=10, from=3-8, to=3-10]
	\arrow["{\scrU_{1}|U_{2}}"{description}, shift left=3, bend left=10, from=3-10, to=3-8]
\end{tikzcd}
\caption{$\abZ_{(0,0)}\cdot \ve{I}_{0}$}
\label{fig:D0A}
\end{figure}
	
The summand $\bfI_0\cdot\abZ_{(0,0)}$ serves as the building block for the computation of the cabling module. Note that after restricting the type-A structure to the $\bfI_0$ summand, the only nontrivial type-A structure map is given by the $\bbF[\scrU_1,\scrV_1]$ coefficient action. Hence, we can view it as a $(\bbF[\scrU_1,\scrV_1],\calK_2)$-bimodule. For simplicity, we denote this bimodule by $\calH_1^\calK$ and suppress the left coefficient ring from the notation, which is depicted in Figure~\ref{fig:DA0}.
	
\begin{figure}[h]
	\begin{tikzcd}
			& {\scrU_{1}^{2}\bfa} && {\scrU_{1}\bfa} && \bfa && \bfd && {\scrV_{1}\bfd} \\
			\cdots &&&&&&&&&& \cdots \\
			& {\scrU_{1}^{2}\bfa} && {\scrU_{1}\bfa} && \bfa && \bfd && {\scrV_{1}\bfd}
			\arrow["{\sigma_{2}}"{description}, from=1-2, to=3-2]
			\arrow["{\tau_{2}}"{description}, from=1-4, to=3-2]
			\arrow["{\sigma_{2}}"{description}, from=1-4, to=3-4]
			\arrow["{\tau_{2}}"{description}, from=1-6, to=3-4]
			\arrow["{\sigma_{2}}"{description}, from=1-6, to=3-6]
			\arrow["{\scrV_{2}^{-1}\tau_{2}}"{description}, from=1-8, to=3-6]
			\arrow["{\sigma_{2}}"{description}, from=1-8, to=3-8]
			\arrow["{\tau_{2}}"{description}, from=1-10, to=3-8]
			\arrow["{\sigma_{2}}"{description}, from=1-10, to=3-10]
			\arrow["{\scrV_{1}|U_{2}}"{description}, bend left=10, from=1-2, to=1-4]
			\arrow["{\scrU_{1}|1}"{description}, shift left=3, bend left=10, from=1-4, to=1-2]
			\arrow["{\scrV_{1}|U_{2}}"{description}, shift right, bend left=10, from=1-4, to=1-6]
			\arrow["{\scrU_{1}|1}"{description}, shift left=3, bend left=10, from=1-6, to=1-4]
			\arrow["{\scrV_{1}|\scrV_{2}}"{description}, bend left=10, from=1-6, to=1-8]
			\arrow["{\scrU_{1}|\scrU_{2}}"{description}, shift left=3, bend left=10, from=1-8, to=1-6]
			\arrow["{\scrV_{1}|1}"{description}, bend left=10, from=1-8, to=1-10]
			\arrow["{\scrU_{1}|U_{2}}"{description}, shift left=3, bend left=10, from=1-10, to=1-8]	
			\arrow["{\scrV_{1}|U_{2}}"{description}, bend left=10, from=3-2, to=3-4]
			\arrow["{\scrU_{1}|1}"{description}, shift left=3, bend left=10, from=3-4, to=3-2]
			\arrow["{\scrV_{1}|U_{2}}"{description}, shift right, bend left=10, from=3-4, to=3-6]
			\arrow["{\scrU_{1}|1}"{description}, shift left=3, bend left=10, from=3-6, to=3-4]
			\arrow["{\scrV_{1}|\scrV_{2}}"{description}, bend left=10, from=3-6, to=3-8]
			\arrow["{\scrU_{1}|\scrU_{2}}"{description}, shift left=3, bend left=10, from=3-8, to=3-6]
			\arrow["{\scrV_{1}|1}"{description}, bend left=10, from=3-8, to=3-10]
			\arrow["{\scrU_{1}|U_{2}}"{description}, shift left=3, bend left=10, from=3-10, to=3-8]	
		\end{tikzcd}
		\caption{$\calH_1^\calK:=\bfI_0\cdot\abZ_{(0,0)}$}
		\label{fig:DA0}
\end{figure}
	
When applying the connected sum formula stated in Subsection \ref{subsec:connectedsumformula}, the arc systems on the two components must be compatible. Hence, we also consider the type-$DA$ bimodule when the arc system is alpha-parallel for both components of the negative Hopf link. The corresponding bimodule is denoted by $\Z$. The only difference between $\Z$ and $\abZ$ is that $\Z$ has extra nontrivial $\delta_3^1$ structure maps, see \cite[Section 17]{zemke2021bordered}. However, the $\delta_3^1$ terms only occur on the diagonal, namely the maps from $\calZ_{0,0}$ to $\calZ_{1,1}$. Hence, when we restrict it to the summand $\bfI_0\cdot\Z$, these terms disappear and the bimodule agrees with $\calH_1^\calK$.

\section{Bordered module for the $n$-strand cabling pattern}\label{sec:cabling}
Figure~\ref{fig:plumbingCn} depicts a plumbing diagram for the exterior of the $(n,0)$-pattern link $L_{(n,0)}$ in a solid torus. This exterior is obtained by removing disjoint tubular neighborhoods of $n$ parallel core curves. Gluing the boundary of the solid torus to the knot complement of any framed knot $(K,\lambda)$ yields the complement of the $(n,0)$-cabling of $(K,\lambda)$.

\begin{figure}[h]
	
	\centering
	\begin{tikzpicture}[scale=1, every node/.style={circle, draw, inner sep=1.5pt}]
		
		\node[fill=black, label=above:{\small $0$}] (v0) at (0,0) {};
		
		\node (vL) at (-2,0) {};
		\draw (v0) -- (vL);
		
		\node (vU) at (2,1.5) {};
		\draw (v0) -- (vU);
		
		\node (vD) at (2,-1.5) {};
		\draw (v0) -- (vD);
		
		\foreach \y in {-0.5,0,0.5} {
			\fill (2,\y) circle (1.5pt);
		}
		
		\draw [decorate,decoration={brace,amplitude=5pt}]
		(2.2,1.5) -- (2.2,-1.5)
		node[midway, right=5pt, draw=none] {$n$};
		
	\end{tikzpicture}
	\caption{A plumbing diagram for the exterior of $L_{(n,0)}$ in the solid torus.}
	\label{fig:plumbingCn}
	
\end{figure}
We associate a type-$DA$ bimodule to it, which is denoted by $_{\calL_n}\calP_{(n,0)}^{\calK}$. Here the type-$D$ side corresponds to the outer boundary of the solid torus, and the type-$A$ side corresponds to boundary tori surrounding the components of the pattern. Applying the connected sum formula, we can compute the link surgery module of $K_{(n,\lambda n)}$ by
\[_{\calL_n}K_{(n,\lambda n)}\cong _{\calL_n}\calP_{(n,0)}^\calK\Boxtimes_{\calK}\calX_{\lambda}(K).\]
Here ${}_{\mathcal K}\mathcal X_\lambda(K)$ is the type-$A$ version of the bordered invariant of the $\lambda$-framed knot complement, obtained by the conversion described in Subsection~\ref{subsec:bordered module}. $_{\calL_n}K_{(n,\lambda n)}$ denotes the corresponding type-$A$ module of the cabling link $K_{(n,\lambda n)}$. Since we do not need the full link surgery module, but only the link Floer complex of $K_{(n,\lambda n)}$. By Theorem \ref{theorem:sublink}, it is chain homotopic to the complex obtained by restricting $_{\calL_n}K_{(n,\lambda n)}$ to the $\otimes_{n}\bfI_{0}$ idempotent. More precisely, define 
\begin{center}
	$R_{n}:= \bbF[\scrU_{1},\scrV_{1},\cdots,\scrU_{n},\scrV_{n}]$,
\end{center}
which is the coefficient ring of the link Floer complex $\CFL(K_{(n,\lambda n)})$. We can view $\CFL(K_{(n,\lambda n)})$ as a left type-$A$ module over $R_{n}$, where the $R_{n}$ action is given by the coefficient action. From the discussion above, we have
\[	_{R_{n}}\CFL(K_{(n,\lambda n)})\cong \otimes_{n}\bfI_{0}\cdot_{\calL_n}K_{(n,\lambda n)}\cong\otimes_{n}\bfI_{0}\cdot_{\calL_n}\calP_{(n,0)}^\calK\Boxtimes_{\calK}\calX_{\lambda}(K).\]
We suppress the retained left $R_n$-action from the notation and denote $\otimes_{n}\bfI_{0}\cdot_{\calL_n}\calP_{(n,0)}^\calK$ by $C_{(n,0)}^\calK$. Then
\begin{equation}
	_{R_{n}}\CFL(K_{(n,\lambda n)})\cong C_{(n,0)}^\calK\Boxtimes_{\calK}\calX_{\lambda}(K).
\end{equation}
In the rest part of this section, we compute the type-$D$ module $C_{(n,0)}^{\calK}$.

\subsection{Connected sum decomposition}
To compute $C_{(n,0)}^{\calK}$, we decompose the plumbing diagram as connected sums of Hopf links, then apply the connected sum formula. We begin by forming the connected sum of two copies of Hopf links as in Figure \ref{fig:connectsum1}. The arcs on the components being joined must be
alpha-parallel and beta-parallel, respectively. Hence, we need to use one $\alpha\beta$ labeled Hopf link bimodule, and the other one $\alpha\alpha$ labeled. Since we will restrict to $\bfI_0$ summand, by the discussion in Subsection \ref{subsec:Hopflink}, both of them are equal to $\calH_1$. To iterate this construction, we require an alpha-parallel arc on the newly formed component. We therefore apply pair-of-pants bimodule ${}_{\mathcal K|\mathcal K}W_{\alpha\beta,\alpha}^{\mathcal K}$ in the connected sum formula \ref{Thm:pop}.  However, since the only difference between the pair of pants bimodule and the merge bimodule is the additional nontrivial $\delta_{5}^{1}$ map:
\[\delta_{5}^{1}(a|b,a'|b',1|\tau,\tau|1,i_{0})=i_{1}\otimes v^{-1}\partial_{\scrU}(ab)b'(\scrU\partial_{\scrU}+\scrV\partial_{\scrV})(a')\tau.\]
In the above, $a,b,a^{'},b^{'}\in \bfI_{1}\cdot\calK\cdot\bfI_{1}$. Hence, we need nontrivial internal differentials on the $\bfI_{1}$ idempotent as input for the $\delta_{5}^{1}$ map. Since there are no internal differentials on $\calH_1^\calK$, the $\delta_{5}^{1}$ map will not have extra output, and we can use the merge bimodule in the connected sum formula. We denote the resulting bimodule by $\calH_2^\calK$:
\[\calH_2^\calK:=(\calH_1^\calK\otimes_\bbF\calH_1^\calK)\Boxtimes _{\calK|\calK}M^\calK.\]

\begin{figure}[h]
	
	\centering
	\begin{tikzpicture}[scale=1, every node/.style={circle, draw, inner sep=1.5pt}]
		
		\node[fill=black, label=above:{\small $\alpha$}] (v1) at (-0.5,0.5) {};
		\node[fill=black, label=below:{\small $\beta$}] (v2) at (-0.5,-0.5) {};
        \node (v3) at (1,1) {};
        \node (v4) at (1,-1) {};
		\draw (v1) -- (v3);
		\draw (v2) -- (v4);
		\node[ draw=none] (v5) at (1.5,0) {$\cong$};
		\node[ draw=none] (v6) at (-0.5,0) {$\#$};
		\node[fill=black, label=above:{\small $\alpha$}] (v7) at (2,0) {};
		\node (v8) at (3.5,1) {};
		\node (v9) at (3.5,-1) {};
		\draw (v7) -- (v9);
		\draw (v7) -- (v8);
	\end{tikzpicture}
	\caption{A connected sum of two Hopf links with an alpha-parallel arc on the resulting component.}
	\label{fig:connectsum1}
	
\end{figure}
The resulting type-$D$ module also has no internal differentials. Hence, we can iteratively connected sum it with the $\alpha\beta$ labeled Hopf link, and define the bimodule $$\calH_n^\calK:=(\calH_1^\calK\otimes_\bbF\calH_{n-1}^\calK)\Boxtimes _{\calK|\calK}M^\calK.$$
For the $s$-th Hopf link copy, denote the left coefficient variables by $\mathscr U_{1,s},\mathscr V_{1,s}$ and the variables in the right surgery algebra by $\mathscr U_{2,s},\mathscr V_{2,s}$. Each of the two idempotent summands of $\calH_n^\calK$ has a basis indexed by tuples $(f_1,\ldots,f_n)$, where $f_{s}\in\{(\scrU_{1,s})^{i}\bfa_{s}$ or $(\scrV_{1,s})^{j}\bfd_{s}\}$, for $s\in\{1,2,\dots,n\}$ and $i,j\in \mathbb{N}_{0}$. We use the same tuple notation in both summands, suppressing the idempotent index. Suppose the structure map on the $s$-th type-$D$ module $\calH_1^\calK$ is given by
\begin{center}
	$\delta_{s}(f_{s})=f_{s,\sigma}\otimes\sigma_{s}+f_{s,\tau}\otimes c_{s}\tau_{s}$,
\end{center}
here $c_{s}\in \bfI_{1}\cdot\calK\cdot\bfI_{1}$, then by the definition of the merge bimodule, the structure map on $\calH_n^\calK$ is given by
\begin{equation}
	\delta(f_{1},f_{2},\dots,f_{n})=(f_{1,\sigma},\dots,f_{n,\sigma})\otimes\sigma+(f_{1,\tau},\dots,f_{n,\tau})\otimes (c_{1}\dots c_{n})\tau.
\end{equation}

Note that, $c_{i}$ is nontrivial only when $f_{i}=\bfd_{i}$, and we have $c_{i}=(\scrV_{2,i})^{-1}$ in this case. After taking connected sum, $\scrU_{2,i}$ (respectively $\scrV_{2,i}$) are identified with each other, which we simply denote by $\scrU_2$ (respectively $\scrV_2$). When forgetting the left coefficient action, the underlying module admits a split into summand of submodules, which are the zigzags connected by the structure maps $\sigma$ and $\tau$.

Finally, we attach the remaining $(0,0)$-framed Hopf link piece in Figure~\ref{fig:plumbingCn}, using the $\alpha\beta$ bimodule from Subsection~\ref{subsec:Hopflink}. The connected sum formula then gives a model
\begin{equation}\label{eq:connectedsum}
	C_{(n,0)}^{\mathcal K}\simeq\mathcal H_n^{\mathcal K}\widehat{\boxtimes}{}_{\mathcal K}\overline{\mathcal Z}_{(0,0)}^{\mathcal K}.
\end{equation}

\begin{Remark}\label{remark:orientation-}
	Since the bimodules used in this connected sum decomposition correspond to the negative Hopf link, the resulting cabling reverses the orientation from the introduction; that is, the pattern links have opposite orientations. We account for this orientation reversal when comparing the resulting Floer modules.
\end{Remark}

\subsection{Zigzag decomposition and homological reduction}\label{subsec:Dn0}
The connected sum decomposition in Equation~\ref{eq:connectedsum} gives an explicit, infinitely generated model for $C_{(n,0)}^{\mathcal K}$. In this subsection, we apply homological perturbation lemma to simplify $C_{(n,0)}^\calK$. The computation closely follows \cite[Section~18]{zemke2021bordered} \cite[Section~7]{chen2024space}. 
We first study the idempotent $C_{(n,0)}^\calK\cdot \bfI_0$. By Equation \ref{eq:connectedsum},
\[C_{(n,0)}^\calK\cdot \bfI_0\simeq \calH_n^\calK\Boxtimes\abZ_{(0,0)}\cdot\bfI_{0}.\]
Recall that the type-$A$ module $\abZ_{(0,0)}\cdot \ve{I}_{0}$ is as shown in Figure \ref{fig:D0A}. Each generator corresponds to a copy of $\bbF_{2}$, and each arrow correspond to $\delta_{2}^{1}$ map. To avoid confusion, we use $\tilde{\bfa}, \tilde{\bfd}$ for the generators of $\abZ_{(0,0)}\cdot \ve{I}_{0}$, and $\tilde{\scrU}_{1},\tilde{\scrV}_{1},\tilde{\scrU}_{2},\tilde{\scrV}_{2}$ for the coefficients. The generators of $C_{(n,0)}^\calK\cdot \bfI_0$ take the form of
\begin{center}
	$\bfe|(f_{1},f_{2},\dots,f_{n})$,
\end{center}
where $\bfe\in\{\tilde{\scrU}_{1}^{i}\tilde{\bfa}$ or $\tilde{\scrV}_{1}^{j}\tilde{\bfd}\}$ and $(f_{1},f_{2},\dots,f_{n})$ is a generator of $\calH_{n}^{\calK}$. The box tensor identifies the coefficient ring $\bbF_{2}[\scrU_{2},\scrV_{2}]$ with $\bbF_{2}[\tilde{\scrU}_{1},\tilde{\scrV}_{1}]$. We record this identification using a ring morphism $\phi$. The differentials on $C_{(n,0)}^\calK\cdot \bfI_0$ are given by:
\begin{align}\label{equa:diff}
	\delta_{1}^{1}(\bfe|(f_{1},f_{2},\dots,f_{n})) 
	&= \delta_{2}^{1}(\sigma,\bfe)|(f_{1,\sigma}\dots,f_{n,\sigma}) \notag \\
	&\quad + \delta_{2}^{1}((c_{1}\cdots c_{n})\tau,\bfe)|(f_{1,\tau}\dots,f_{n,\tau}) \notag \\
	&= \delta_{2}^{1}(\sigma,\bfe)|(f_{1,\sigma}\dots,f_{n,\sigma}) \notag \\
	&\quad + \phi(c_{1}\cdots c_{n})\delta_{2}^{1}(\bfe,\tau)|(f_{1,\tau}\dots,f_{n,\tau}).
\end{align}

Similar to $\calH_{n}^{\calK}$, $C_{(n,0)}^\calK\cdot\bfI_0$ also has a splitting into summand of subcomplexes, which are zigzags connected by the differentials in \ref{equa:diff}. In Figure \ref{fig:zigzagexample}, we have an example of the zigzag of $C_{(3,0)}^\calK\cdot \bfI_0$. Following the convention from \cite[Section~7]{chen2024space}, we call a zigzag \emph{contractible} if there exists a continuous nullhomotopy $h$, such that $\mathrm{Id}=\partial(h)$.
\begin{figure}
	\[\begin{tikzcd}
		& {\tilde{\scrU}_{1}^{2}\tilde{\mathbf{a}}|(\scrU_{1,1}\mathbf{a}_{1},\scrU_{1,2}\mathbf{a}_{2},\scrU_{1,3}\mathbf{a}_{3})} && {\tilde{\scrU}_{1}^{2}\tilde{\mathbf{a}}|(\mathbf{a}_{1},\mathbf{a}_{2},\mathbf{a}_{3})} && {\tilde{\mathbf{d}}|(\mathbf{d}_{1},\mathbf{d}_{2},\mathbf{d}_{3})} \\
		\cdots &&&&&& \cdots \\
		& {\tilde{\scrV}_{1}^{-3}\tilde{\mathbf{d}}|(\scrU_{1,1}\mathbf{a}_{1},\scrU_{1,2}\mathbf{a}_{2},\scrU_{1,3}\mathbf{a}_{3})} && {\tilde{\scrV}_{1}^{-3}\tilde{\mathbf{d}}|(\mathbf{a}_{1},\mathbf{a}_{2},\mathbf{a}_{3})} && {\tilde{\mathbf{d}}|(\mathbf{d}_{1},\mathbf{d}_{2},\mathbf{d}_{3})}
		\arrow["{\tilde{U}^{2}_{2}\tilde{\scrV}_{2}}"{description}, from=1-2, to=3-2]
		\arrow["1"{description}, from=1-4, to=3-2]
		\arrow["{\tilde{U}^{2}_{2}\tilde{\scrV}_{2}}"{description}, from=1-4, to=3-4]
		\arrow["{\tilde{\scrU}_2}"{description}, from=1-6, to=3-4]
		\arrow["1"{description}, from=1-6, to=3-6]
	\end{tikzcd}\]
	\caption{A zigzag of $C_{(3,0)}^\calK\cdot \bfI_0$}
	\label{fig:zigzagexample}
\end{figure}

\begin{lem}\label{lem:trivial}
	Given a zigzag, if all upper-row generators $\bfe|(f_{1},f_{2},\dots,f_{n})$ of this zigzag have $\bfe$ belonging to only $\{\tilde{\scrU}_{2}^{i}\tilde{\bfa}\}$ (respectively $\{\tilde{\scrV}_{2}^{j}\tilde{\bfd}\}$) , then it is contractible.
\end{lem}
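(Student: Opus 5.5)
The plan is to produce an explicit nullhomotopy by a ``matching'' argument on the zigzag. I treat the two cases separately: in the $\tilde{\bfa}$-case I use the $\tau$-edges as the matching, and in the $\tilde{\bfd}$-case the $\sigma$-edges.

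\emph{Unit coefficients.} By Figure~\ref{fig:D0A}, every upper generator $\tilde{\scrU}_1^i\tilde{\bfa}$ has a $\tau_1$-arrow with label $\tau_1|1$, landing on $\tilde{\scrV}_1^{-i-1}\tilde{\bfd}$. In Equation~\eqref{equa:diff} the corresponding term of $\delta^1_1$ carries the extra coefficient $\phi(c_1\cdots c_n)$. Each $c_s$ is either $1$ or $\scrV_{2,s}^{-1}$, so $\phi(c_1\cdots c_n)$ is a power of $\tilde{\scrV}_1^{-1}$ acting on the $\bfI_1$-summand $\bbF[\tilde{\scrV}_1,\tilde{\scrV}_1^{-1}]\langle\tilde{\bfd}\rangle$. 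This action is invertible and simply shifts the $\tilde{\scrV}_1$-exponent. Hence every upper generator has exactly one $\tau$-arrow, and its coefficient is a unit. Symmetrically, every upper generator $\tilde{\scrV}_1^j\tilde{\bfd}$ has a $\sigma$-arrow $\sigma_1|1$, with no extra coefficient since the merge bimodule contributes nothing on the $\sigma$-term.

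\emph{The matching is perfect.} In the $\tilde{\bfa}$-case, let $M$ be the sum of all $\tau$-components of the differential restricted to the zigzag, and let $N$ be the sum of the $\sigma$-components. So $\partial=M+N$. I claim $M$ is a bijection from upper to lower generators of the zigzag.
\begin{enumerate}
\item $M$ is injective. Distinct upper generators have distinct $\tau$-targets, because the map $(i,(f_s))\mapsto(-i-1,(f_{s,\tau}))$, with the exponent shift by $\phi(c)$, is injective. This uses the description of $\delta_s$ on $\calH_1^\calK$ in Figure~\ref{fig:DA0}.
\item $M$ is surjective. Each lower generator of the zigzag is hit by a $\tau$-arrow from an upper generator of the form $\tilde{\scrU}_1^{i}\tilde{\bfa}|(\cdots)$. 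Connectivity of the zigzag forces that upper generator to lie in the zigzag, and by hypothesis it is of $\tilde{\bfa}$-type.
\end{enumerate}
In particular there are no dangling lower endpoints. The $\tilde{\bfd}$-case is identical with $\sigma$ in place of $\tau$.

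\emph{The homotopy.} Set $h_0:=M^{-1}$, going from lower to upper generators. Then $\partial h_0+h_0\partial=\mathrm{Id}+Nh_0+h_0N$. Order the zigzag linearly. Each $\sigma$-edge moves strictly in one direction relative to the $\tau$-matching, as in Figure~\ref{fig:zigzagexample}, so $T:=Nh_0$ and $h_0N$ are strictly ``shift'' operators. I will then define
\[
h:=h_0\sum_{k\geq 0}(Nh_0)^k ,
\]
and check formally that $\partial h+h\partial=\mathrm{Id}$.

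\emph{Main obstacle: convergence and continuity.} The delicate point is that this geometric series converges and that $h$ is continuous for the cofinite basis topology. I expect to argue that $Nh_0$ strictly increases a combined exponent along the zigzag, namely the power of $\tilde{\scrU}_1$ or $\tilde{\scrV}_1^{-1}$ together with the $\scrU_{1,s}$, $\scrV_{1,s}$ exponents. Then $(Nh_0)^k$ applied to a fixed generator eventually leaves every finite set, so the sum converges in the completion and $h$ is continuous. If the zigzag is finite in that direction the series terminates, and the argument is simpler.
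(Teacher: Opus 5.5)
Your construction is the paper's own: $h_0=M^{-1}$ is its $h_\uparrow$, your $N$ is its $\alpha_\swarrow$, and $h=h_0\sum_k(Nh_0)^k$ is exactly $h_\uparrow\circ\sum_n(\alpha_\swarrow\circ h_\uparrow)^n$. Your perfect-matching check is more explicit than the paper's. For surjectivity, put it in the non-circular order: each lower generator has a unique $\tau$-preimage, that preimage lies in the zigzag, so by hypothesis it is $\tilde{\bfa}$-type and its label is $1$.

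\textbf{The gap is in your last step.} That $(Nh_0)^k x$ eventually leaves every finite set gives only pointwise convergence of $h(x)$ in the completion. It does not give continuity of $h$. Continuity is part of what ``contractible'' means here, and it is what lets $h$ act on the completed complex. Your criterion cannot be sufficient, as the following example shows.
\begin{itemize}
\item Take the bi-infinite zigzag with $\partial u_k=l_{k-1}+l_k$, all labels equal to $1$.
\item It has a perfect matching, and your $h$, namely $h(l_{k-1})=\sum_{r\ge 0}u_{k+r}$, converges pointwise.
\item $\partial h+h\partial=\mathrm{Id}$ holds formally on basis elements.
\item Yet the zigzag is not contractible: $\sum_k u_k$ is a nonzero cycle in the completion that is not a boundary, and the uncompleted cokernel is $\bbF$.
\end{itemize}
What fails is that a fixed $u_j$ occurs with coefficient $1$ in $h(l_{k-1})$ for every $k\le j$, so $h$ is not continuous.

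\textbf{The missing ingredient is the one the paper's proof records:} the non-matching arrows carry non-unit labels. In the $\tilde{\bfa}$-case the $\sigma$-arrows are labeled $\tilde U_2^{i}\tilde{\scrV}_2$. In the $\tilde{\bfd}$-case the $\tau$-arrows are labeled $\tilde U_2^{j}\tilde{\scrU}_2$. Consequently the coefficient produced by $(Nh_0)^k$ has $\tilde{\scrV}_2$-degree (respectively $\tilde{\scrU}_2$-degree) at least $k$. So for a fixed upper generator $u$, the coefficient of $u$ in $h(l)$ lies in $J_N$ as soon as $l$ is more than $N$ steps from $u$.

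Combined with your escape-to-infinity observation, this is exactly continuity of $h$ from the module, with its cofinite basis topology, into its chiral tensor product with $\calK$. That is the check the paper delegates to \cite[Lemma~7.6]{chen2024space}. Your ``combined exponent'' tracks exponents on the generators rather than the accumulated algebra coefficients, which is why it misses this point.
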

\begin{proof}
	Suppose $\bfe$ belonging to $\{\tilde{\scrV}_{2}^{j}\tilde{\bfd}\}$, then checking the structure maps in Figure \ref{fig:D0A}, we see that the induced differentials on this zigzag have all the vertical differentials labeled by $1$, and the diagonal maps are labeled by $\tilde{U}_2^i\tilde{\scrU_2}^j$ with $j\geq1$. Define a map $h_{\uparrow}$ on the generators at the bottom of the zigzag by sending each generator to the generator directly above it. We also denote the diagonal maps by $\alpha_{\swarrow}$.
	\[\begin{tikzcd}[row sep=2.5em, column sep=3.5em]
		\cdots & \bullet & \bullet & \bullet & \cdots \\
		\cdots & \bullet & \bullet & \bullet & \cdots
		\arrow[from=2-2, to=1-2,"h_{\uparrow}"]
		\arrow[from=2-3, to=1-3,"h_{\uparrow}"]
		\arrow[from=2-4, to=1-4,"h_{\uparrow}"]
		\arrow[from=1-3, to=2-2,"\alpha_{\swarrow}"']
		\arrow[from=1-4, to=2-3,"\alpha_{\swarrow}"']
	\end{tikzcd}
	\]
	Then the null homotopy $h$ is defined as:
	\[h:=h_{\uparrow}\circ\Sigma_{n=0}^{\infty}(\alpha_{\swarrow}\circ h_{\uparrow})^n.\]
	It is straightforward to check that $\mathrm{Id}=\partial(h)$. To complete the proof, it remains to show that $h$ is continuous. Recall that $C_{(n,0)}^\calK$ is obtained by taking connected sums of copies of $\calH_1^\calK$. Each $\calH_1^\calK$ is equipped with cofinite basis topology. Hence, after tensoring these complexes together to form $C_{(n,0)}^\calK$, the resulting module is also equipped with cofinite basis topology. One can check that $h$ is continuous when $C_{(n,0)}^\calK$ is endowed with the cofinite basis topology and $\bbF[\tilde{\scrU}_2,\tilde{\scrV}_2]$ is endowed with the chiral topology. For more details, see \cite[Lemma~7.6]{chen2024space}.
	
	Similarly, if $\bfe$ belongs to $\{\tilde{\scrU}_{2}^{i}\tilde{\bfa}\}$, then all the diagonal differentials are labeled by $1$, and the vertical differentials are labeled by $\tilde{U}_2^i\tilde{\scrV_2}^j$ with $j\geq1$. By a similar argument, one can construct a null homotopy in this case as well.
\end{proof}

\begin{lem}\label{lem:red}
	Given a zigzag whose upper-row contains both $\tilde{\bfa}$ and $\tilde{\bfd}$ as its generator, then it is homotopic to a rank one submodule  with trivial structure map.
\end{lem}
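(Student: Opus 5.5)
Consider a zigzag whose upper row contains generators with both $\tilde{\bfa}$-type and $\tilde{\bfd}$-type first factors. The plan is to cut it at the transition point and contract the two halves separately, in the same way as in Lemma~\ref{lem:trivial}.

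First I would describe the shape of such a zigzag, using Figure~\ref{fig:D0A} and the structure maps \eqref{equa:diff}. Moving along the upper row in one direction, the first factor $\bfe$ runs through $\tilde{\scrU}_1^{i}\tilde{\bfa}$ with $i$ decreasing, then switches to $\tilde{\scrV}_1^{j}\tilde{\bfd}$ with $j$ increasing. Because $\sigma$ and $\tau$ act on each $f_s$ monotonically (with the single shift at $\bfd_s\mapsto\bfa_s$), the transition occurs exactly once. So there is a distinguished adjacent pair of upper generators, $\tilde{\bfa}|\cdots$ and $\tilde{\bfd}|\cdots$, which share a common lower neighbour $x_0$ (the $\tilde{\bfd}|\cdots$ generator in the bottom row of Figure~\ref{fig:D0A}).

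On the $\tilde{\bfa}$ side of $x_0$, the diagonal arrows (coming from $\tau$) are labeled $1$. On the $\tilde{\bfd}$ side, the vertical arrows (coming from $\sigma$) are labeled $1$. At the transition, both arrows into $x_0$ may carry nontrivial coefficients; this is exactly the situation in Figure~\ref{fig:zigzagexample}.

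Next I would construct the homotopy equivalence explicitly, rather than quoting Lemma~\ref{lem:hompert2} abstractly.
\begin{enumerate}
\item On the $\tilde{\bfd}$ side, use the homotopy $h_{\uparrow}$ that pairs each lower generator with the upper generator directly above it, as in Lemma~\ref{lem:trivial}.
\item On the $\tilde{\bfa}$ side, use the homotopy that pairs each lower generator with the upper generator connected to it by the label-$1$ diagonal arrow.
\end{enumerate}
Together these cancel every generator except one. I expect the survivor to be the lower generator $x_0$ (possibly a single upper generator instead, depending on the labels). The combined $h$ is the geometric series $h_{\uparrow}\sum(\alpha h_{\uparrow})^n$ on each side. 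It satisfies $h^2=0$, $\pi h=0$ and $hi=0$ by construction, and $i\circ\pi+\mathrm{Id}=\partial(h)$ holds because every remaining edge is invertible (labeled $1$). The inclusion $i$ sends the survivor to itself plus the correction terms $h\circ\delta$, and $\pi$ is the projection corrected by $\delta\circ h$ terms. The induced structure map on the rank one summand vanishes because it has no self-arrows: any composite path from the survivor back to itself would have to lower the index on one side, which is impossible.

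The main obstacle is continuity, together with checking that all the infinite sums converge. The zigzag is infinite on both sides, so the geometric series must converge in the cofinite basis topology on $C_{(n,0)}^\calK$ paired with the chiral topology on $\bbF[\tilde{\scrU}_2,\tilde{\scrV}_2]$. I would argue as in \cite[Lemma~7.6]{chen2024space}: each application of $\alpha h_{\uparrow}$ strictly moves the generator outward along the zigzag and multiplies by a coefficient of increasing $\tilde{U}_2$-power. Hence only finitely many terms hit any fixed finite set of basis elements, and each individual generator has finite image. A secondary point is confirming that the transition in the zigzag is unique, so that exactly one generator survives; I would check this by tracking the $\tilde{\bfa}/\tilde{\bfd}$ switch in the structure maps of $\calH_1^\calK$ and $\abZ_{(0,0)}$.
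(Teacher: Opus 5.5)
Your construction is the one in the paper. You cut the zigzag at the unique $\tilde{\bfa}/\tilde{\bfd}$ transition and let $h_{\uparrow}$ reverse the label-$1$ arrows (diagonals on the $\tilde{\bfa}$ side, verticals on the $\tilde{\bfd}$ side). Then $G=\sum_{r\geq 0}(\alpha_{\swarrow}h_{\uparrow})^r$, $h=h_{\uparrow}G$, $\pi=\pi_r\circ G$, and the survivor is the lower generator $x_0$ at the transition.

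You can drop both hedges. First, $x_0$ is always the survivor. The two arrows into it carry the nontrivial labels $\tilde U_2^{m-1}\tilde{\scrV}_2$ and $\tilde U_2^{l}\tilde{\scrU}_2$, so $h_{\uparrow}(x_0)=0$. Second, $x_0$ is a cycle, so $i$ is the plain inclusion and there is no $h\circ\delta$ correction.

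Your continuity paragraph, however, has the geometry backwards.
\begin{itemize}
\item On the $\tilde{\bfa}$ side, $h_{\uparrow}$ climbs a label-$1$ diagonal up and to the right, and $\alpha_{\swarrow}$ drops straight down.
\item On the $\tilde{\bfd}$ side, $h_{\uparrow}$ climbs straight up, and $\alpha_{\swarrow}$ drops down and to the left.
\end{itemize}
Either way, $\alpha_{\swarrow}h_{\uparrow}$ moves one step \emph{toward} $x_0$, and the series stops there. So each $G(\ell)$ is a finite sum, and nothing needs to converge generator by generator.

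Your claim that only finitely many terms hit a fixed finite set of basis elements is false. Every lower generator $\ell$ has $\pi(\ell)$ equal to a nonzero multiple of $x_0$. Likewise, a fixed upper generator appears in $h(\ell)$ for every $\ell$ farther out on its side.

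Continuity must instead come from the coefficients. A path of $d$ steps toward $x_0$ picks up at least $\tilde{\scrV}_2^{d}$ on the $\tilde{\bfa}$ side and at least $\tilde{\scrU}_2^{d}$ on the $\tilde{\bfd}$ side. It does not necessarily pick up growing $\tilde U_2$-powers: once every $f_s$ has passed through $\bfd_s$, the labels repeat, for instance as $\tilde{\scrU}_2$. Hence the coefficients attached to far-away generators lie in $J_N$ for large $N$, which is exactly what continuity into the chiral tensor product with $\calK$ requires.

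The paper's proof of this lemma does not discuss continuity at all. With this correction, your argument is the more complete of the two.
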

\begin{proof}
		We check the place where the generator shifts from $\tilde{\bfa}$ to $\tilde{\bfd}$ in the following diagram.
	\[\begin{tikzcd}
		&& {\tilde{\scrU}_{1}^{m-1}\tilde{\mathbf{a}}|(\cdots)} && {\tilde{\scrV}_{1}^{l}\tilde{\mathbf{d}}|(\cdots)} \\
		\cdots &&&&& \cdots \\
		{\tilde{\scrV}_{1}^{-m-k}\tilde{\mathbf{d}}|(\cdots)} && \textcolor{red}{{\tilde{\scrV}_1^{-m}\tilde{\mathbf{d}}|(\cdots)}} && {\tilde{\scrV}_{1}^{l}\tilde{\mathbf{d}}|(\cdots)}
		\arrow["1"{description}, from=1-3, to=3-1]
		\arrow["{\tilde{U}_{2}^{m-1}\tilde{\scrV}_{2}}"{description}, from=1-3, to=3-3]
		\arrow["{\tilde{U}_{2}^{l}\tilde{\scrU}_{2}}"{description}, from=1-5, to=3-3]
		\arrow["1"{description}, from=1-5, to=3-5]
	\end{tikzcd}\]
	
To the left of the red generator, all the diagonal arrows labeled by $1$, and to the right of the red generator, all the vertical arrows are labeled by $1$. We claim that this zigzag is homotopy to the rank $1$ submodule generated by this red generator. Using the same notation as above, let $h_{\uparrow}$ denote the map which reverses the arrows labeled by 1, and let $\alpha_{\swarrow}$ denote the map corresponding to the arrows with nontrivial labels. Set
\[
G=\sum_{r=0}^{\infty}(\alpha\circ h_\uparrow)^r.
\]
The homotopy maps are given by:
\begin{enumerate}
	\item $i$ is the inclusion.
	\item $h$ is given by $h_\uparrow\circ G$.
	\item Let $\pi_r$ be the projection map from the zigzag to the red generator. Then $\pi$ is given by $\pi_r\circ G$.
\end{enumerate}
\end{proof}

Thus, in the $\bfI_0$ summand, zigzags whose upper-row generators are all of the same type can be removed, while each mixed zigzag contributes a single surviving generator. The analysis of the $\bfI_1$ summand is similar, except that $\tilde{\scrV}_2$ is inverted, making an additional family of zigzags contractible. In the next subsection, we describe the $\bfI_1$ summand and the coefficient actions on the surviving generators in both summands.
\subsection{Type-$A$ module $_\calK\calC_{(n,0)}$}
The preceding subsection simplifies the individual zigzags, but does not yet describe the coefficient actions on the surviving generators. To study these actions, we pass to the type-$A$ module
\[
{}_{\mathcal K}\mathcal C_{(n,0)} :=C_{(n,0)}^{\mathcal K}\widehat{\boxtimes}{}_{\mathcal K|\mathcal K}[\mathbb I^{\Supset}].
\]
We first describe the homology of its two idempotent summands by generators and relations, and then determine the induced actions of $\sigma$ and $\tau$. After pairing with the type-$AA$ identity module, each zigzag in $\bfI_0\cdot _\calK\calC_{n,0}$ converts to chain complex over $\bbF[\tilde{\scrU_2},\tilde{\scrV_2}]$. The homology is again generated by the red generator. To describe the coefficient actions, however, it is more convenient to retain all lower-row generators as representatives of homology classes. The differentials of the upper-row generators impose relations among these classes. Although there are infinitely many zigzags involving both $\tilde{\bfa}$ and $\tilde{\bfd}$, resulting in a large collection of bottom generators, these generators are finitely generated as a $R_n\otimes_{\bbF}\bbF[\tilde{\scrU_2},\tilde{\scrV_2}]$ module.
\begin{prop}\label{prop:1stbasis}
	The homology of $\bfI_0\cdot{}_{\mathcal K}\mathcal C_{(n,0)}$ is generated, as an $R_n\otimes_{\mathbb F}\mathbb F[\tilde{\scrU}_2,\tilde{\scrV}_2]$-module, by the classes represented by
	\begin{align*}
		x_{1}^{0} &:= \tilde{\scrV}_{1}^{-1} \tilde{\bfd} | (\bfa_{1},\dots,\bfa_{n}), \\
		\vdots &\qquad \qquad  \vdots  \\
		x_{n}^{0} &:= \tilde{\scrV}_{1}^{-n} \tilde{\bfd} | (\bfa_{1},\dots,\bfa_{n}). 
	\end{align*}
\end{prop}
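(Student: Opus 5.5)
Using Lemmas~\ref{lem:trivial} and~\ref{lem:red}, I will treat the homology of $\bfI_0\cdot{}_{\calK}\calC_{(n,0)}$ as spanned over $\bbF$ by one red generator for each mixed zigzag. After pairing with $[\bbI^\Supset]$, every lower-row generator of a mixed zigzag is homologous to a monomial multiple of that red generator. The reason is that the upper-row differentials $\delta(\text{upper})=\text{vertical}+\text{diagonal}$ identify neighbouring lower generators up to coefficients in $\bbF[\tilde\scrU_2,\tilde\scrV_2]$. So it suffices to show that every lower-row generator $\tilde\scrV_1^{-k}\tilde\bfd|(f_1,\dots,f_n)$ lying in a mixed zigzag is an $R_n\otimes\bbF[\tilde\scrU_2,\tilde\scrV_2]$-multiple of some $x_k^0$ with $1\le k\le n$, modulo boundaries.

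\textbf{Step 1: locate the red generators.} Fix a tuple $(f_1,\dots,f_n)$ with $a$ entries of type $\bfa$ and $n-a$ of type $\bfd$. Apply the merge rule, which attaches $\sigma$ or $c_1\cdots c_n\tau$ with $c_i=\scrV_{2,i}^{-1}$ exactly for $f_i=\bfd_i$. Reading Figure~\ref{fig:D0A} then shows that the $\tau$-arrow shifts the $\tilde\bfe$-label by the number of $\bfd$-entries, and shifts the Hopf indices. From this I will check that the transition point $\tilde\scrU_1^{m-1}\tilde\bfa\to\tilde\scrV_1^l\tilde\bfd$ of Lemma~\ref{lem:red} forces the red generator to have the form $\tilde\scrV_1^{-m}\tilde\bfd|(\cdots)$ with $1\le m\le n$. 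This is the source of the $n$ classes.

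\textbf{Step 2: reduce the tuple.} The left $R_n$-action on $\calH_n$ is the coefficient action of each $\calH_1$ copy. By Figure~\ref{fig:DA0}, it moves $\scrU_{1,s}^i\bfa_s$ to $\scrU_{1,s}^{i+1}\bfa_s$ with output $1$. It moves $\bfd_s$ to $\bfa_s$ via $\scrU_{1,s}$ with output $\scrU_2$, and $\scrV_{1,s}^j\bfd_s$ up with output $1$. Under $\phi$, each $\scrU_2,\scrV_2$ output becomes a $\tilde\scrU_1,\tilde\scrV_1$-action on the $\tilde\bfe$ factor, and these are absorbed by $\bbF[\tilde\scrU_2,\tilde\scrV_2]$ via Figure~\ref{fig:D0A}. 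Starting from $(\bfa_1,\dots,\bfa_n)$, the $\scrU_{1,s}$ and $\scrV_{1,s}$ actions therefore reach every tuple. The induced shift on the $\tilde\bfe$ label is compensated by the choice of $k$ and by $\tilde\scrU_2,\tilde\scrV_2$ multiplication.

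\textbf{Step 3: identify red generators with multiples of the $x_k^0$.} I will show that the red generator of each mixed zigzag equals $r\cdot x_k^0$ in homology, for some $k$ and some monomial $r$. To do this, I will apply an $R_n$-monomial to $x_k^0$. The result lies in some zigzag, and I will then slide along the lower row using the relations from upper generators. Each such relation reads lower$_i$ $\sim$ (coefficient)$\cdot$lower$_{i\pm1}$, and it is invertible in the direction of a label-$1$ arrow.

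I expect the main obstacle to be the bookkeeping in Step 1 and Step 3: tracking exactly how the integer $k$ changes when $\scrU_{1,s}$ converts a $\bfd_s$ into an $\bfa_s$, and checking that one can always move within the range $1\le k\le n$ rather than leaving it. I would first do $n=1$ and $n=2$ by hand, then induct on the number of $\bfd$-entries in the tuple.
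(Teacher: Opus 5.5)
Your reduction is fine: it suffices to show that each lower-row generator of a mixed zigzag, in particular the red one, lies in the span of the $x_k^0$. Step~2 also points the right way. But the actual content of the proof is which $x_k^0$ a given generator lies over, and why $1\le k\le n$. You leave this as the ``main obstacle'', and Steps~1 and~3 would not resolve it.

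Step~1 bounds the wrong quantity: the $\tilde\scrV_1$-exponent $m$ of the red generator is not the index $k$. For $n=2$, the generator $\tilde\scrV_1^{-1}\tilde\bfd|(\bfa_1,\bfd_2)$ is red; its incoming arrows are labelled $\tilde\scrV_2$ and $\tilde\scrU_2$. Yet it equals $\scrV_{1,2}\cdot x_2^0$, and it is not a multiple of $x_1^0$, because the $R_n$-action preserves the quantity $m+t$ introduced below.

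Step~3's sliding cannot help either. Both arrows into a red generator carry non-unit labels, $\tilde U_2^{m-1}\tilde\scrV_2$ and $\tilde U_2^{l}\tilde\scrU_2$ (Lemma~\ref{lem:red}). So the upper-generator relations only express other lower generators as multiples of the red one, never the red one as a multiple of a neighbour. An $R_n$-monomial applied to some $x_k^0$ must therefore land exactly on the red generator.

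Two smaller inaccuracies:
\begin{enumerate}
\item The $\tau$-arrow lowers the $\tilde\scrV_1$-exponent by the number of entries equal to $\bfd_i$ exactly, not by the number of $\bfd$-type entries.
\item After pairing with $[\bbI^\Supset]$, the variables $\tilde\scrU_2,\tilde\scrV_2$ act only on the coefficient factor, so they cannot compensate a change of the $\tilde\bfe$-label.
\end{enumerate}

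The missing idea is the invariant $k=m+t$, where $t$ is the number of $\bfd$-type entries of the tuple. By Proposition~\ref{prop:hopf}, $\scrU_{1,i}$ acting on $\scrU_{1,i}^{s}\bfa_i$ and $\scrV_{1,i}$ acting on $\scrV_{1,i}^{s}\bfd_i$ both have output $1$. On the other hand, $\scrV_{1,i}\cdot\bfa_i=\bfd_i\otimes\scrV_2$, and this output acts through $\tilde\scrV_1$ on the lower row of Figure~\ref{fig:D0A} by $\tilde\scrV_1^{j}\tilde\bfd\mapsto\tilde\scrV_1^{j+1}\tilde\bfd$, again with output $1$. Write $f_i=\scrU_{1,i}^{s_i}\bfa_i$ for $i\in I_{\bfa}$ and $f_i=\scrV_{1,i}^{s_i}\bfd_i$ for $i\in I_{\bfd}$. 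Then one has, on the nose,
\[
\tilde\scrV_1^{-m}\tilde\bfd|(f_1,\dots,f_n)=\Bigl(\prod_{i\in I_{\bfa}}\scrU_{1,i}^{s_i}\prod_{i\in I_{\bfd}}\scrV_{1,i}^{s_i+1}\Bigr)\cdot x_{m+t}^0,
\]
with $t=|I_{\bfd}|$, no boundaries, and no $\tilde\scrU_2,\tilde\scrV_2$ factors. This identity is essentially the paper's whole proof.

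The range follows because $m+t$ is constant along each zigzag. Both targets of an upper generator $\tilde\scrU_1^{i}\tilde\bfa|(g)$ have $m+t=i+1+t(g)\geq 1$. Both targets of $\tilde\scrV_1^{l}\tilde\bfd|(g)$ have $m+t=t(g)-l\leq n$. This works because the $\tau$-arrow lowers the exponent by exactly as many units as it converts entries $\bfd_i$ into $\bfa_i$.

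A mixed zigzag contains upper generators of both kinds, so all its lower generators satisfy $1\le m+t\le n$. This is the constraint $-n\le -m-t\le -1$ that the paper invokes. With it in hand, Steps~1 and~3 become unnecessary.
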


\begin{proof}[Proof of Proposition \ref{prop:1stbasis}]
	
	Given a generator $\tilde{\scrV}_{1}^{-m}\tilde{\mathbf{d}}|(f_{1},f_{2},\dots,f_{n})$, such that $f_{i}$ takes values in $\{\scrU_{1,i}^{s_{i}}{\bfa}_{i}$ or $\scrV_{1,i}^{s_{i}}{\bfd}_{i}\}$. Let $\#\{f_{i}=\scrV_{1,i}^{s_{i}}{\bfd}_{i}\}=t$. Suppose it is a bottom generator from a zigzag which involves both $\tilde{\bfa}$ and $\tilde{\bfd}$, then we have
    \[-n\leq -m-t\leq -1.\]
    
	Without loss of generality, we assume $\{f_{1},\dots,f_{n-t}\}\in\{\scrU_{1,i}^{s_{i}}{\bfa}_{i}\}$, and $\{f_{n-t+1},\dots,f_{n}\}\in\{\scrV_{1,i}^{s_{i}}{\bfd}_{i}\}$. Using the coefficient action from Proposition \ref{prop:hopf}, it is not hard to show that
	\begin{align*}
		\tilde{\scrV}_{1}^{-m}\tilde{\mathbf{d}} | (f_{1}, f_{2}, \dots, f_{n}) 
		&= (\scrU_{1,1}^{s_{1}}, \dots, \scrU_{1,n-t}^{s_{n-t}}, \scrV_{1,n-t+1}^{s_{n-t}+1}, \dots, \scrV_{1,n}^{s_{n}+1}) 
		\tilde{\scrV}_{1}^{-m-t}\tilde{\mathbf{d}} | (\bfa_{1}, \bfa_{2}, \dots, \bfa_{n}) \\ 
		&= (\scrU_{1,1}^{s_{1}}, \dots, \scrU_{1,n-t}^{s_{n-t}}, \scrV_{1,n-t+1}^{s_{n-t+1}+1}, \dots, \scrV_{1,n}^{s_{n}+1}) x_{m+t}^{0}.
	\end{align*}		
\end{proof}

The relations between these generators are given as follows. The first relation arises from the Hopf link bimodule, where we identify all the $U$ actions, i.e.
\begin{equation}\label{eq:Urelation}
	\scrU_{1,i}\scrV_{1,i}=\tilde{\scrU}_{2}\tilde{\scrV}_{2}.
\end{equation}

The other relations between coefficient actions are from the zigzags. We need to check the pairs of coefficients from $R_n$ which send the generator to the same zigzag.
We list the pairs below:

\begin{enumerate}
	\item $(\scrU_{1,1},\scrU_{1,2},\cdots,\scrU_{1,n}),$ and $(1,1,\cdots,1)$\label{coeffrel(1)}
	\item $(\scrV_{1,1},\scrV_{1,2},\cdots,\scrV_{1,n}),$ and $(1,1,\cdots,1)$\label{coeffrel(2)}
	\item $(\scrU_{1,1},\cdots,\scrU_{1,m},1,\cdots,1),$ and $(1,\cdots,1,\scrV_{1,m+1},\cdots,\scrV_{1,n})$, for $m\in\{1,2,\cdots,n-1\}$\label{coeffrel(3)}
\end{enumerate}

We are also allowed to change the positions of $\scrU_{1,i}$ and $\scrV_{1,i}$ for the third pair of coefficients, provided their positions remain complementary. To simplify the notation without introducing ambiguity, we adopt the abbreviations
\begin{center}
	$(\scrU\cdots \scrU)_{m},\quad \mathrm{and}\quad(\scrV\cdots \scrV)_{n-m}$,
\end{center}
which denotes the coefficient with $m$ (respectively, $n-m$) $\scrU_{1,i}$ (respectively, $\scrV_{1,i}$).

We now check the relations within each pair of coefficient actions. We list the zigzags used in the computation and the relations below:

\subsubsection*{Coefficient actions of pair \ref{coeffrel(1)}}
\[\begin{tikzcd}
	&&& {\tilde{\scrU}_{1}^{k-1}\tilde{\mathbf{a}}|(\mathbf{a}_{1},\cdots,\mathbf{a}_{n})} \\
	\cdots &&&& \cdots \\
	& {\tilde{\scrV}_{1}^{-k}\tilde{\mathbf{d}}|(\scrU_{1,1}\mathbf{a}_{1},\cdots,\scrU_{1,n}\mathbf{a}_{n})} && {\tilde{\scrV}_{1}^{-k}\tilde{\mathbf{d}}|(\mathbf{a}_{1},\cdots,\mathbf{a}_{n})}
	\arrow["1"{description}, from=1-4, to=3-2]
	\arrow["{\tilde{U}_{2}^{k-1}\tilde{\scrV}_{2}}"{description}, from=1-4, to=3-4]
\end{tikzcd}\]
The corresponding relation is:
\begin{center}
	${(\scrU\cdots \scrU)}_{n}\cdot x_{k}^{0}=x_{k}^{0}\otimes \tilde{U}_{2}^{k-1}\tilde{\scrV}_{2}.$
\end{center}
Note that, the relation is induced from the structure map (the vertical map) of the Hopf link bimodule, which is determined by the factor $\tilde{\scrU}_{1}^{k-1}\tilde{\mathbf{a}}$. Hence, it extends naturally with respect to other $\scrU_{1,i}$ action. If there is a $\scrV_{1,i}$ action, then we can use relation \ref{eq:Urelation} to reduce it to the third pair \ref{coeffrel(3)}.

\subsubsection*{Coefficient actions of pair \ref{coeffrel(2)}}
\[\begin{tikzcd}
	&&& {\tilde{\scrV}_{1}^{n-k}\tilde{\mathbf{d}}|(\mathbf{d}_{1},\cdots,\mathbf{d}_{n})} \\
	\cdots &&&& \cdots \\
	& {\tilde{\scrV}_{1}^{-k}\tilde{\mathbf{d}}|(\mathbf{a}_{1},\cdots,\mathbf{a}_{n})} && {\tilde{\scrV}_{1}^{n-k}\tilde{\mathbf{d}}|(\mathbf{d}_{1},\cdots,\mathbf{d}_{n})}
	\arrow["{\tilde{U}_{2}^{n-k}\tilde{\scrU}_{2}}"{description}, from=1-4, to=3-2]
	\arrow["1"{description}, from=1-4, to=3-4]
\end{tikzcd}\]

The corresponding relation is:

\begin{center}
	${(\scrV\cdots \scrV)}_{n}\cdot x_{k}^{0}=x_{k}^{0}\otimes \tilde{U}_{2}^{n-k}\tilde{\scrU}_{2}.$
\end{center}

By a similar argument, this relation is natural with respect to the $\scrV_{1,i}$ action, and any $\scrU_{1,i}$ action will reduce it to the third pair \ref{coeffrel(3)}.

\subsubsection*{Coefficient actions of pair \ref{coeffrel(3)}}
We study the relations of coefficient actions in two cases.\\

\textbf{Case 1: $n-m-k\geq0$}

\begin{tikzcd}[every cell/.style={font=\small},column sep=small, row sep=small]
	&&& {\tilde{\scrV}_{1}^{n-m-k}\tilde{\bfd}|(\bfa_{1},\cdots,\bfa_m,\bfd_{m+1},\cdots,\bfd_n)} \\
	\cdots &&&& \cdots \\
	& {\tilde{\scrV}_{1}^{-k}\tilde{\bfd}|(\scrU_{1,1}\bfa_1,\cdots,\scrU_{1,m}\bfa_m,\bfa_{m+1},\cdots,\bfa_n)} && {\tilde{\scrV}_{1}^{n-m-k}\tilde{\bfd}|(\bfa_1,\cdots,\bfa_m,\bfd_{m+1},\cdots,\bfd_n)}
	\arrow["{\tilde{U}_{2}^{n-m-k}\tilde{\scrU}_{2}}"{description}, from=1-4, to=3-2]
	\arrow["1"{description}, from=1-4, to=3-4]
\end{tikzcd}

The corresponding relation is:

\begin{center}
	${(\scrU\cdots \scrU)}_{m}\cdot x_{k}^{0}\otimes \tilde{U}_{2}^{n-m-k}\tilde{\scrU}_{2}=(\scrV\cdots \scrV)_{n-m}\cdot x_{k}^{0}.$
\end{center}

Note that, when $n-m-k=0$, we have
\begin{equation}\label{coeffrel{3.1}}
	{(\scrU\cdots \scrU)}_{n-k}\cdot x_{k}^{0}\otimes \tilde{\scrU}_{2}=(\scrV\cdots \scrV)_{k}\cdot x_{k}^{0}.
\end{equation}

One can check that, by multiplying $\{\scrV_{1,i}\}$'s on both sides to equation \ref{coeffrel{3.1}}, and applying relation \ref{eq:Urelation}, relation \ref{coeffrel{3.1}} generates the other relations in this case.\\

\textbf{Case 2: $n-m-k<0$}

\begin{tikzcd}[every cell/.style={font=\small},column sep=small, row sep=small]
	&&& {\tilde{\scrU}_{1}^{m+k-n-1}\tilde{\bfa}|(\bfa_1,\cdots,\bfa_m,\bfd_{m+1},\cdots,\bfd_{n})} \\
	\cdots &&&& \cdots \\
	& {\tilde{\scrV}_{1}^{-k}\tilde{\bfd}|(\scrU_{1,1}\bfa_{1},\cdots,\scrU_{1,m}\bfa_{m},\bfa_{m+1},\cdots,\bfa_{n})} && {\tilde{\scrV}_{1}^{n-m-k}\tilde{\bfd}|(\bfa_{1},\cdots,\bfa_{m},\bfd_{m+1},\cdots,\bfd_{n})}
	\arrow["1"{description}, from=1-4, to=3-2]
	\arrow["{\tilde{U}_{2}^{m+k-n-1}\tilde{\scrV}_{2}}"{description}, from=1-4, to=3-4]
\end{tikzcd}

The corresponding relation is:

\begin{center}
	${(\scrU\cdots \scrU)}_{m}\cdot x_{k}^{0}=(\scrV\cdots \scrV)_{n-m}\cdot x_{k}^{0}\otimes \tilde{U}_{2}^{m+k-n-1}\tilde{\scrV}_{2}.$
\end{center}

Similarly to the discussion of the case above, when $n-m-k=-1$, we have
\begin{equation}\label{coeffrel{3.2}}
	{(\scrU\cdots \scrU)}_{n-k+1}\cdot x_{k}^{0}=(\scrV\cdots \scrV)_{k-1}\cdot x_{k}^{0}\otimes \tilde{\scrV}_{2},
\end{equation}
which generates the other relations in this case.

The computation of $\bfI_1\cdot_\calK C_{(n,0)}$ is almost the same as $\bfI_0\cdot_\calK C_{(n,0)}$, except that we change the coefficient ring from $\bbF_{2}[\tilde{\scrU}_{2},\tilde{\scrV}_{2}]$ to $\bbF_{2}[\tilde{\scrU}_{2},\tilde{\scrV}_{2},\tilde{\scrV}_{2}^{-1}]$. Hence, when checking the zigzag contains $\tilde{\scrV}_{1}^{-1}\tilde{\bfd}|(\bfa_{1},\cdots,\bfa_{n})$, since $\tilde{\scrV}_{2}$ is invertible, it is contractible.
\[\begin{tikzcd}
	& {\tilde{\mathbf{a}}|(\cdots)} && {\tilde{\mathbf{a}}|(\cdots)} && {\tilde{\scrV}_{1}^{n-1}\tilde{\mathbf{d}}|(\cdots)} \\
	\cdots &&&&&& \cdots \\
	& {\tilde{\scrV}_{1}^{-1}\tilde{\mathbf{d}}|(\cdots)} && {\tilde{\scrV}_{1}^{-1}\tilde{\mathbf{d}}|(\cdots)} && {\tilde{\scrV}_{1}^{n-1}\tilde{\mathbf{d}}|(\cdots)}
	\arrow["{\tilde{\scrV}_{2}}"{description}, from=1-2, to=3-2]
	\arrow["1"{description}, from=1-4, to=3-2]
	\arrow["{\tilde{\scrV}_{2}}"{description}, from=1-4, to=3-4]
	\arrow["{\tilde{U}_{2}^{n-1}\tilde{\scrU}_{2}}"{description}, from=1-6, to=3-4]
	\arrow["1"{description}, from=1-6, to=3-6]
\end{tikzcd}\]
Analogously, we have the following.

\begin{prop}\label{prop:2ndbasis}
	The homology of $\bfI_1\cdot{}_{\mathcal K}\mathcal C_{(n,0)}$ is generated, as an $R_n\otimes_{\bbF}\bbF[\tilde{\scrU_2},\tilde{\scrV_2},\tilde{\scrV}_2^{-1}]$-module, by the classes represented by
	\begin{align*}
		x_{2}^{1} &:= \tilde{\scrV}_{1}^{-2} \tilde{\bfd} | (\bfa_{1},\dots,\bfa_{n}),  \\
		\vdots &\qquad \qquad  \vdots \\
		x_{n}^{1} &:= \tilde{\scrV}_{1}^{-n} \tilde{\bfd} | (\bfa_{1},\dots,\bfa_{n}). 
	\end{align*}
    The coefficient relations are obtained from the ones of $\bfI_0\cdot{}_{\mathcal K}\mathcal C_{(n,0)}$ by replacing $x_k^0$ with $x_k^1$, for $2\leq k\leq n$, and allowing $\tilde{\scrV}_2$ to be invertible.
\end{prop}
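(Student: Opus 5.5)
The plan is to repeat the $\bfI_0$ analysis of Lemmas~\ref{lem:trivial}, \ref{lem:red} and Proposition~\ref{prop:1stbasis}, now over the localized ring $\bbF[\tilde{\scrU}_2,\tilde{\scrV}_2,\tilde{\scrV}_2^{-1}]$. By Equation~\ref{eq:connectedsum}, $C_{(n,0)}^\calK\cdot\bfI_1\simeq\calH_n^\calK\Boxtimes\abZ_{(0,0)}\cdot\bfI_1$, and $\abZ_{(0,0)}\cdot\bfI_1$ is the localization of $\abZ_{(0,0)}\cdot\bfI_0$ at $\tilde{\scrV}_2$. Hence the generators, and the zigzag splitting given by the differentials~\eqref{equa:diff}, are the same as before. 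The only change is that every arrow label becomes a unit as soon as it is a pure power of $\tilde{\scrV}_2$.

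First I would redo the contractibility analysis.
\begin{itemize}
\item Zigzags whose upper row contains only $\tilde{\bfa}$-type or only $\tilde{\bfd}$-type factors are contractible by the same null-homotopy as in Lemma~\ref{lem:trivial}.
\item For a mixed zigzag, Lemma~\ref{lem:red} leaves the single red generator $\tilde{\scrV}_1^{-m}\tilde{\bfd}|(\cdots)$. At the switch point, the vertical arrow from $\tilde{\scrU}_1^{m-1}\tilde{\bfa}|(\cdots)$ carries the label $\tilde U_2^{m-1}\tilde{\scrV}_2$.
\item When $m=1$ this label is $\tilde{\scrV}_2$, which is now invertible. The displayed zigzag through $\tilde{\scrV}_1^{-1}\tilde{\bfd}|(\bfa_1,\dots,\bfa_n)$ then has a unit arrow at every step. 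Reversing these arrows, with the geometric series $h=h_\uparrow\sum_r(\alpha_\swarrow h_\uparrow)^r$ and $h_\uparrow$ now involving $\tilde{\scrV}_2^{-1}$, gives a null-homotopy, so this zigzag dies.
\item For $m\ge2$ the label still contains $\tilde U_2^{m-1}$ and is not a unit. The reduction of Lemma~\ref{lem:red} then goes through verbatim, and the survivor is the red generator.
\end{itemize}

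Next, the generation argument of Proposition~\ref{prop:1stbasis} carries over without change. Every surviving bottom generator $\tilde{\scrV}_1^{-m}\tilde{\bfd}|(f_1,\dots,f_n)$ equals a monomial in $R_n$ times $x^1_{m+t}$, with $1\le m+t\le n$. The index $m+t=1$ has just been shown to vanish in homology, which leaves $x^1_2,\dots,x^1_n$.

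Finally, the relations come from the same zigzag pictures as for pairs (\ref{coeffrel(1)})--(\ref{coeffrel(3)}), together with \eqref{eq:Urelation}. Each relation is read off from one upper-row generator and its two outgoing arrows, so it transports unchanged after replacing $x^0_k$ by $x^1_k$ and working over the localized ring. Relations that involve $x_1$ become vacuous because $x^1_1=0$.

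The main obstacle is continuity of the new null-homotopy. It contains powers of $\tilde{\scrV}_2^{-1}$, and its geometric series must converge in the cofinite basis topology against the chiral topology on the $\bfI_1$ algebra. The neighborhoods $J_n$ there do not bound negative $\scrV$-powers, so I would first verify, as in \cite[Lemma~7.6]{chen2024space}, that each term of $(\alpha_\swarrow h_\uparrow)^r$ moves the generator index strictly along the zigzag. This makes $h$ a locally finite sum on basis elements and hence continuous. I would also check that the $\tilde U_2^{m-1}$ factor keeps the $m\ge2$ zigzags from collapsing.
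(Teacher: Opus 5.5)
Your proposal is correct and follows the paper's own argument. You rerun the $\bfI_0$ zigzag analysis over $\bbF[\tilde{\scrU}_2,\tilde{\scrV}_2,\tilde{\scrV}_2^{-1}]$, observe that the zigzag through $\tilde{\scrV}_1^{-1}\tilde{\bfd}|(\bfa_1,\ldots,\bfa_n)$ becomes contractible once $\tilde{\scrV}_2$ is a unit, and transport the generation and relation statements; the continuity check you flag is left implicit in the paper.

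One correction to your case split: survival is decided by the index $k=m+t$ of the zigzag, not by the red exponent $m$, because the left tail of a mixed zigzag eventually carries vertical labels $\tilde U_2^{k-1}\tilde{\scrV}_2$. For example, the red generator $\tilde{\scrV}_1^{-1}\tilde{\bfd}|(\bfa_1,\ldots,\bfa_{n-1},\bfd_n)=\scrV_{1,n}x_2^1$ has the unit label $\tilde{\scrV}_2$ at its switch point but survives, so not every ``$m=1$'' zigzag dies. Your generation step only uses $x_1^1=0$ in homology together with $R_n$-linearity, so it is unaffected.
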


With this generators, the structure map also has a simple description. Applying the homological perturbation lemma, we can compute the structure maps from $\bfI_0\cdot_\calK C_{(n,0)}$ to $\bfI_1\cdot_\calK C_{(n,0)}$, which are given by:
\begin{align}
	m_2(\sigma,x_{i}^{0}) &= x_{i}^{1},  &\quad \text{for } i &\in \{2, \dots, n\},\notag \\
	m_2(\tau,x_{i}^{0})   &= x_{i+1}^{1}, &\quad \text{for } i &\in \{1, \dots, n-1\}.\notag
\end{align}
These structure maps are nature with respect to the $R_n$ action and extend to actions of $R_n\otimes_{\bbF}\bbF[\tilde{\scrU_2},\tilde{\scrV_2}]$, $R_n\otimes_{\bbF}\bbF[\tilde{\scrU_2},\tilde{\scrV_2},\tilde{\scrV}_2^{-1}]$ via the relations defined in the knot surgery algebra $\calK$.

\subsection{$_\calK C_{(n,0)}$ as a $R_n$ module}\label{subsec:An0}
Since our eventual goal is to pair $_\calK C_{(n,0)}$ with type-$D$ module of the knot complement $\calX_{\lambda}(K)^{\calK}$ to get the link Floer homology $\CFL(K_{(n,\lambda n)})$, which is an $R_n$ module, we want to represent $_\calK C_{(n,0)}$ as an  $R_n$ module. To do so, we replace the actions of $R_n\otimes_{\bbF}\bbF[\tilde{\scrU_2},\tilde{\scrV_2}]$, $R_n\otimes_{\bbF}\bbF[\tilde{\scrU_2},\tilde{\scrV_2},\tilde{\scrV}_2^{-1}]$ by enlarging the set of generators. We claim $_\calK C_{(n,0)}$ is homotopic to $_\calK\mathcal{C}_{(n,0)}$, which is defined as follows.

$\bfI_0\cdot_\calK\mathcal{C}_{(n,0)}^\calK$ is a quotient $R_n$ module, which is generated by 
\begin{enumerate}
	\item $x_{1,i}^{0}$ for $i\in\{0,-1,-2,\dots\}$;
	\item $x_{k,i}^{0}$ for $k\in\{2,\dots,n-1\}$ and $i\in \mathbb{Z}$;
	\item $x_{n,i}^{0}$ for $i\in\{0,1,2,\dots\}$.
\end{enumerate} 

The relations are generated by

\begin{enumerate}
	\item $(\scrU\cdots \scrU)_{n-1}\cdot {x_{1}^{0}}_{,i-1} = (\scrV)_{1}\cdot {x_{1}^{0}}_{,i}$;
	\item For $k\in\{2,\cdots,n-1\}$,\\
	$(\scrU\cdots \scrU)_{n-k+1}\cdot x_{k,i}^{0} = (\scrV\cdots\scrV)_{k-1}\cdot x_{k,i+1}^{0}  \quad \text{for } i \in \{0,1,2,\dots\},$\\
	$(\scrU\cdots\scrU)_{n-k}\cdot x_{k,i-1}^{0} = (\scrV\cdots\scrV)_{k}\cdot x_{k,i}^{0}  \quad \text{for } i \in \{0,-1,-2,\dots\}$;
	\item $(\scrU)_{1}\cdot x_{k,i}^{0} = (\scrV\cdots\scrV)_{n-1}\cdot x_{k,i+1}^{0}.$
\end{enumerate}

$\bfI_{1}\cdot{_{\mathcal{K}}\mathcal{C}_{(n,0)}}$ is generated by
\[x_{k,i}^{1}\quad \mathrm{for } \quad k\in\{2,\cdots,n\}\quad \mathrm{and } \quad i\in\mathbb{Z}.\]
And the relations
\[(\scrU\cdots\scrU)_{n-k+1}\cdot x_{k,i}^{1} = (\scrV\cdots\scrV)_{k-1}\cdot x_{k,i+1}^{1}.\]

The structure map is only trivial for $m_2$, which is given by the coefficient action of $R_n$ together with
\begin{enumerate}
	\item $m_2(\sigma, x_{1,i}^{0})=0,$\\
	$m_2(\tau, x_{1,i}^{0})=x_{2,i}^{1};$
	\item For $k\in\{2,\cdots,n-1\}$,\\
	$m_2(\sigma, x_{k,i}^{0})=$
	$\begin{cases}
		x_{k,i}^{1}, & \text{if } i\geq 0,\\
		x_{k,i}^{1}\otimes \tilde{U}_{2}^{\abs{i}}, & \text{if } i< 0,
	\end{cases}$\\
	$m_2(\tau,x_{k,i}^{0})=$
	$\begin{cases}
		x_{k+1,i}^{1}\otimes \tilde{U}_{2}^{i}, & \text{if } i\geq 0,\\
		x_{k+1,i}^{1}, & \text{if } i< 0;
	\end{cases}$
	\item $m_2(\sigma,x_{n,i}^{0})=x_{n,i}^{1},$\\
	$m_2(\tau,x_{n,i}^{0})=0$.
\end{enumerate}
\begin{Remark}
	Although the structure maps above involve powers of $\tilde{U}_2$, these powers can be represented by other elements of $R_n$. We omit this identification there, and perform the replacement only when needed.
\end{Remark}
\begin{prop}\label{propnewbasis}
	${_{\mathcal{K}}C_{(n,0)}}$ is homotopic to ${_{\mathcal{K}}\mathcal{C}_{(n,0)}}$.
\end{prop}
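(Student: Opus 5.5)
Starting from Proposition~\ref{prop:1stbasis} and Proposition~\ref{prop:2ndbasis}, I will trade the extra $\bbF[\tilde{\scrU}_2,\tilde{\scrV}_2]$ (resp.\ $\bbF[\tilde{\scrU}_2,\tilde{\scrV}_2,\tilde{\scrV}_2^{-1}]$) action for additional $R_n$-generators. Concretely, in the $\bfI_0$ summand I set
\[
x^0_{k,i}:=x^0_k\otimes\tilde{\scrU}_2^{\,i}\ \ (i\ge 0),\qquad x^0_{k,i}:=x^0_k\otimes\tilde{\scrV}_2^{\,|i|}\ \ (i<0),
\]
and similarly $x^1_{k,i}$ in the $\bfI_1$ summand. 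Each is represented by an explicit bottom generator of a mixed zigzag, namely the red generator of Lemma~\ref{lem:red}. Both summands of ${}_\calK C_{(n,0)}$ split as direct sums of zigzags. After the homotopy equivalences of Lemmas~\ref{lem:trivial} and~\ref{lem:red}, the complex deformation retracts onto the span of the red generators with zero differential. So, as an $\bbF$-vector space, ${}_\calK C_{(n,0)}$ is homotopy equivalent to the span of these classes. The first task is to show that this span is exactly the quotient module defining ${}_\calK\calC_{(n,0)}$.

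For the generators, I use the inequality $-n\le -m-t\le -1$ from the proof of Proposition~\ref{prop:1stbasis}. Every red generator is an $R_n$-multiple of some $\tilde{\scrV}_1^{-j}\tilde{\bfd}|(\bfa_1,\dots,\bfa_n)$ tensored with a power of $\tilde{\scrU}_2$ or $\tilde{\scrV}_2$. The truncations in the index $i$ come from the two extreme relations. For $k=1$, relation~\eqref{coeffrel{3.1}} with the pair~\ref{coeffrel(2)} shows that $x^0_1\otimes\tilde{\scrU}_2$ is already an $R_n$-multiple of $x^0_1$ and lower terms, so only $i\le 0$ is needed. Symmetrically, relation~\eqref{coeffrel{3.2}} for $k=n$ kills the need for negative $i$. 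In $\bfI_1$, the class $x^1_1$ is absent because that zigzag is contractible.

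For the relations, I translate~\eqref{coeffrel{3.1}} and~\eqref{coeffrel{3.2}}, together with pairs~\ref{coeffrel(1)} and~\ref{coeffrel(2)}, into the listed relations by substituting the definition of $x_{k,i}$. Relation~\eqref{eq:Urelation} lets me absorb $\tilde U_2$ into $\scrU_{1,j}\scrV_{1,j}$. Completeness follows from the remark in the text that~\eqref{coeffrel{3.1}} and~\eqref{coeffrel{3.2}} generate all the other zigzag relations. The module structure is then a direct check on each zigzag: multiplying a red generator by $\scrU_{1,j}$ or $\scrV_{1,j}$ lands in a generator of another zigzag, and pushing it to that zigzag's red generator via $\pi=\pi_r\circ G$ gives the claimed coefficient.

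Finally, I transfer the structure maps using the type-$A$ homological perturbation lemma, i.e.\ Lemma~\ref{lem:hompert2} after pairing with $[\bbI^\Supset]$. The transferred $m_2(\sigma,-)$ and $m_2(\tau,-)$ are given by $\pi\circ m_2\circ i$ plus correction terms through $h$. Because the only input is a single algebra element and $h$ only reverses unit-labelled arrows inside one zigzag, these terms reduce to following the zigzag. This gives $m_2(\sigma,x^0_i)=x^1_i$ and $m_2(\tau,x^0_i)=x^1_{i+1}$ as stated. Rewriting $\tau\tilde{\scrU}_2^{\,i}=(\phi^\tau(\tilde{\scrU}_2))^i\tau=\tilde{\scrV}_2^{-i}\tau$ in $\calK$ produces the $\tilde U_2^{|i|}$ factors and the vanishing cases $m_2(\sigma,x^0_{1,i})=0$ and $m_2(\tau,x^0_{n,i})=0$.

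The main obstacle is showing that higher $m_j$ ($j\ge3$) vanish on the reduced model and that the homotopy is continuous. The perturbation formula~\eqref{eq:DAperturb} could produce $m_3$ terms from chains $\delta\, h\, \delta$ that cross idempotents. I would argue they are zero because $h$ is supported within $\bfI_0$-zigzags or within $\bfI_1$-zigzags. I would also use that the merge and Hopf modules involved have $\delta^1_j=0$ for $j\ge 3$ once restricted to the $\bfI_0$ summand, so any chain with two algebra inputs must pass through $\bfI_1\to\bfI_1$ coefficient actions, which commute with $h$. For continuity, I would cite the cofinite-basis argument of Lemma~\ref{lem:trivial} and \cite[Lemma~7.6]{chen2024space}.
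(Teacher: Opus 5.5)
Your outline matches the paper's proof: start from Propositions \ref{prop:1stbasis} and \ref{prop:2ndbasis}, adjoin powers of $\tilde{\scrU}_2,\tilde{\scrV}_2$ as new $R_n$-generators, truncate at $k=1$ and $k=n$, then check the relations and the $\sigma,\tau$ actions. However, your indexing is reversed, and this breaks the generating-set step.

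\textbf{The truncation relations.} The truncations come from the first and second coefficient pairs at the extreme values of $k$, where the $\tilde U_2$-exponent vanishes:
\[
(\scrU\cdots\scrU)_n\cdot x^0_1=x^0_1\otimes\tilde{\scrV}_2,\qquad (\scrV\cdots\scrV)_n\cdot x^0_n=x^0_n\otimes\tilde{\scrU}_2.
\]
So at $k=1$ it is $\tilde{\scrV}_2$ that already acts through $R_n$, and you must adjoin $x^0_1\otimes\tilde{\scrU}_2^{|i|}$ for $i\le 0$. At $k=n$ it is $\tilde{\scrU}_2$ that is absorbed, and you must adjoin $x^0_n\otimes\tilde{\scrV}_2^{\,i}$ for $i\ge 0$.

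\textbf{Why your justification fails.} The relation you invoke at $k=1$ is $(\scrU\cdots\scrU)_{n-1}\cdot x^0_1\otimes\tilde{\scrU}_2=(\scrV)_1\cdot x^0_1$. It does not make $x^0_1\otimes\tilde{\scrU}_2$ an $R_n$-multiple of $x^0_1$; it only equates $R_n$-multiples of two different elements.

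\textbf{Consequences of your convention.} With $\tilde{\scrU}_2^{\,i}$ for $i\ge0$ and $\tilde{\scrV}_2^{|i|}$ for $i<0$, three things go wrong.
\begin{enumerate}
\item Your adjoined $x^0_{1,i}$ ($i<0$) and $x^0_{n,i}$ ($i>0$) are already $R_n$-multiples of $x^0_1$, $x^0_n$, so they add nothing.
\item The classes actually needed are missed. For $n\ge2$, $x^0_n\otimes\tilde{\scrV}_2$ is not in the $R_n$-span of $x^0_n$; compare the presentation of $M^0_n$ at the end of the paper.
\item The defining relations fail. The relation $(\scrU\cdots\scrU)_{n-1}\cdot x^0_{1,i-1}=(\scrV)_1\cdot x^0_{1,i}$ is the $k=1$ relation above multiplied by $\tilde{\scrU}_2^{|i|}$ only under the paper's definition $x^0_{1,i}=x^0_1\otimes\tilde{\scrU}_2^{|i|}$. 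Under yours it would assert, up to the common factor $\tilde{\scrV}_2^{|i|}$, that $(\scrU\cdots\scrU)_{n-1}(\scrU\cdots\scrU)_n\cdot x^0_1=(\scrV)_1\cdot x^0_1$. This is false, since the two sides have different cable Alexander gradings.
\end{enumerate}

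\textbf{Structure maps.} The same reversal affects your structure-map computation. The $\tilde U_2$ factors in the paper's formulas come from two sources:
\begin{itemize}
\item $\tau\tilde{\scrV}_2^{\,i}=(\tilde{\scrU}_2\tilde{\scrV}_2^2)^i\tau$ applied to $x^0_k\otimes\tilde{\scrV}_2^{\,i}$ with $i\ge0$, which gives $x^1_{k+1,i}\otimes\tilde U_2^{\,i}$;
\item $\sigma$ applied to $x^0_k\otimes\tilde{\scrU}_2^{|i|}$ with $i<0$, which gives $x^1_k\otimes\tilde{\scrU}_2^{|i|}=x^1_{k,i}\otimes\tilde U_2^{|i|}$, where $x^1_{k,i}:=x^1_k\otimes\tilde{\scrV}_2^{\,i}$ for all $i\in\mathbb{Z}$.
\end{itemize}
Your rewriting $\tau\tilde{\scrU}_2^{\,i}=\tilde{\scrV}_2^{-i}\tau$ produces no $\tilde U_2$ factor at all. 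Also, the vanishing of $m_2(\sigma,x^0_{1,i})$ and $m_2(\tau,x^0_{n,i})$ comes from the absence of $x^1_1$ and $x^1_{n+1}$, not from any rewriting in $\calK$.

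\textbf{Fix.} Take $x^0_{k,i}=x^0_k\otimes\tilde{\scrV}_2^{\,i}$ for $i\ge0$ and $x^0_{k,i}=x^0_k\otimes\tilde{\scrU}_2^{|i|}$ for $i<0$, keeping only $i\le0$ when $k=1$ and $i\ge0$ when $k=n$. Take $x^1_{k,i}=x^1_k\otimes\tilde{\scrV}_2^{\,i}$. With this convention your argument becomes the paper's. Your additional discussion of higher $m_j$ and continuity goes beyond what the paper checks, but it is not where the problem lies.
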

\begin{proof}
	Proposition \ref{prop:1stbasis} (respectively, Proposition \ref{prop:2ndbasis}) already provides a representation over $R_n\otimes_{\bbF}\bbF[\tilde{\scrU_2},\tilde{\scrV_2}]$ (respectively, over $R_n\otimes_{\bbF}\bbF[\tilde{\scrU_2},\tilde{\scrV_2},\tilde{\scrV}_2^{-1}]$). To prove the present proposition, it remains only to show the generators of ${_{\mathcal{K}}C_{(n,0)}}$ is a generating set over $R_n$ and check the consistency of these relations. We begin with the idempotent $\bfI_0\cdot{_{\mathcal{K}}\mathcal{C}_{(n,0)}}$.
	\begin{enumerate}
		\item Consider the generator $x_{1}^{0}$. Since we have
        \[(\scrU,\cdots,\scrU)_{n}\cdot x_{1}^{0}=x_{1}^{0}\otimes\tilde{\scrV}_{2},\]
		which realize the $\tilde{\scrV}_{2}$ action as the action by $(\scrU,\cdots,\scrU)_{n}$. It therefore suffices to determine a basis corresponding to the $\tilde{\scrU}_{2}$ action to obtain the full vector space structure of $\bbF_{2}(\tilde{\scrU}_{2},\tilde{\scrV}_{2})$.
		Define
		\[x_{1,i}^{0}:=x_{1}^{0}\otimes\tilde{\scrU}_{2}^{\abs{i}} \quad \mathrm{for} \quad i\in\{0,-1,-2,\cdots\}.\]
		Then we have
		\[(\scrU\cdots \scrU)_{n-1}\cdot {x_{1}^{0}}_{,i-1} = (\scrV)_{1}\cdot {x_{1}^{0}}_{,i},\]
		which generates the relations.
		\item For the generators $x_{k}^{0}$, where $2\leq k\leq n-1$. We first check the $(\scrU\cdots\scrU)_{n}$ and $(\scrV\cdots \scrV)_{n}$ action, to determine a basis.
		\begin{align*}\label{basis2ton-1}
			(\scrU\cdots\scrU)_{n}\cdot x_{k}^{0} &= x_{k}^{0}\otimes \tilde{U}_{2}^{k-1}\tilde{\scrV}_{2}, \\
			(\scrV\cdots\scrV)_{n}\cdot x_{k}^{0} &= x_{k}^{0}\otimes \tilde{U}_{2}^{n-k}\tilde{\scrU}_{2}. \notag
		\end{align*}
		Since both relations contain nontrivial $\tilde{U}_{2}$ power, to achieve all $\tilde{\scrU}_{2}$ and $\tilde{\scrV}_{2}$ action, we need the following basis.
		Define
		\begin{align*}
			{x_{k}^{0}}_{,i} & := x_{k}^{0} \otimes \tilde{\scrV}_{2}^{i}, \quad \text{for } i \in \{0,1,2,\dots\},\\
			{x_{k}^{0}}_{,i} & := x_{k}^{0} \otimes \tilde{\scrU}_{2}^{\abs{i}}, \quad \text{for } i \in \{-1,-2,\dots\}. \notag
		\end{align*}
		We have the following coefficient action relations generating all the other relations:
		\begin{align*}
			(\scrU\cdots \scrU)_{n-k+1}\cdot x_{k,i}^{0} &= (\scrV\cdots\scrV)_{k-1}\cdot x_{k,i+1}^{0}  \quad \text{for } i \in \{0,1,2,\dots\},\\
			(\scrU\cdots\scrU)_{n-k}\cdot x_{k,i-1}^{0} &= (\scrV\cdots\scrV)_{k}\cdot x_{k,i}^{0}  \quad \text{for } i \in \{0,-1,-2,\dots\}.
		\end{align*}
		\item For the generator $x_{n}^{0}$. Since
		\[(\scrV,\cdots,\scrV)_{n}\cdot x_{n}^{0}=x_{n}^{0}\otimes\tilde{\scrU}_{2},\]
		it suffices to use only the $\tilde{\scrV}_{2}$ action.
		Define
		\begin{center}
			${x_{n}^{0}}_{,i}:=x_{n}^{0}\otimes\tilde{\scrV}_{2}^{i}$ for $i\in\{0,1,2,\cdots\}$.
		\end{center}
		The relation of coefficient action is generated by:
		\[(\scrU)_{1}\cdot x_{n,i}^{0} = (\scrV\cdots\scrV)_{n-1}\cdot x_{n,i+1}^{0}.\]
	\end{enumerate}
	
	We now perform a similar computation to determine a generating set for $\bfI_{1}\cdot{_{\mathcal{K}}C_{(n,0)}}$. Note that, the relations of the coefficient actions remain the same as $\bfI_{0}\cdot{_{\mathcal{K}}C_{(n,0)}}$. However, upon localization at $\tilde{\scrV}_{2}$, it suffices to consider a generating set corresponding to the $\tilde{\scrV}_{2}$ power, and the relations of coefficient actions are generated by \ref{coeffrel{3.2}}. In summary, we define the generators of $\bfI_{1}\cdot{_{\mathcal{K}}C_{(n,0)}}$ as follows.
	Define
    \[x_{k,i}^{1}:=x_{k}^{1}\otimes\tilde{\scrV}_{2}^{i} \quad \mathrm{for} \quad k\in\{2,\cdots,n\} \quad \mathrm{and} \quad i\in\mathbb{Z}.\]
	Together with the relations:
    \[(\scrU\cdots \scrU)_{n-k+1}\cdot x_{k,i}^{1} = (\scrV\cdots\scrV)_{k-1}\cdot x_{k,i+1}^{1}.\]
	It is straightforward to verify that the induced structure maps on this generating set agree with the structure maps of ${_{\mathcal{K}}\mathcal{C}_{(n,0)}}$. Hence Proposition \ref{propnewbasis} follows.
\end{proof}

\section{Cabling modules and the full-twist map}\label{sec:coloredmodule}
Recall that the colored knot Floer homology is defined as the colimit of the direct system in Equation~\ref{eq:coloreddef}. Thus, to compute it, we need to understand both $\HFL(K_{n,nm})$ and the cobordism maps $\phi_0:\HFL(K_{n,nm})\rightarrow\HFL(K_{n,n(m+1)})$. In Section \ref{sec:cabling}, we computed the bordered module for the $(n,0)$-cabling pattern. The bordered modules corresponding to other framings will be denoted by
\[
C_{(n,m)}^{\calK} \qquad\text{and}\qquad {}_{\calK}\calC_{(n,m)}
\]
in the type-$D$ and type-$A$ descriptions, respectively. Here the
parameter $m$ corresponds to the cable $K_{n,nm}$. The type-$D$
module $C_{(n,m)}^{\calK}$ is obtained from $C_{(n,0)}^{\calK}$
by incorporating the type-$D$ module $\calD_m^{\calK}$ of the
$m$-framed solid torus. The module $\calD_m^\calK$ has two generators $\xs^0$ and $\xs^1$ which are in idempotent $\bfI_0$ and $\bfI_1$, respectively. The structure map is given by
\[\delta^1(\xs^0)=\xs^1\otimes(\sigma+\scrV^m\tau).\]
Thus, tensoring with $\calD_m^\calK$ simply modifies each structure map with $\tau$ output with an extra $\scrV^m$. For the type-$A$ description in Subsection \ref{subsec:An0}. The type-$A$ module $_\calK\calC_{(n,m)}$ has the same generators as $_\calK\calC_{(n,0)}$, and the structure maps with $\tau$ input have an extra $\tilde{\scrV}_2^m$ output. Thus, to simplify the notation, for different values of $m$, we use only $x_{(k,i)}^0$ and $x_{(k,i)}^1$ to denote the generators of $_\calK\calC_{(n,m)}$, suppressing $m$ from the notation. The structure maps are given by
\[m_2^{(m)}(\sigma,x)=m_2^{(0)}(\sigma,x),\qquad m_2^{(m)}(\tau,x)=\tilde{\scrV}_2^m m_2^{(0)}(\tau,x).\]

There are two useful ways to pass from framing $m$ to framing
$m+1$. One can describe the change algebraically using the
$1$-framed solid-torus module, or geometrically by performing
$(-1)$-surgery on the dual knot. The first description makes the
change in the bordered module transparent, whereas the second is
better suited to identifying the cobordism map $\phi_0$.

We therefore begin with the surgery description. We compute the
Alexander grading on the corresponding surgery complex and apply
Theorem~\ref{theorem:linkcobsurgery} to identify the subcomplex
representing $\phi_0$. We then compose this inclusion with the chain
homotopy equivalence relating the surgery description to the
bordered description above. In this way, we obtain an explicit
description of the cobordism map at the level of the type-$A$
cabling modules.

\subsection{Connected sum decomposition and Alexander grading of the subcomplex}
Figure \ref{fig:plumbingnk} gives a plumbing description of $_\calK\calC_{(n,m+1)}$ obtained from the plumbing diagram for $_\calK\calC_{(n,m)}$ by performing a $-1$ surgery on the dual knot. Algebraically, this gives
\begin{equation}\label{eq:cobconnectsum}
	_\calK\calC_{(n,m+1)}\cong (C_{(n,m)}^\calK\otimes (D_0^\calK\Boxtimes{\AbZ_{(-1,0)}}))\Boxtimes _{\calK|\calK}M^\calK\Boxtimes_{\calK|\calK}[I]^\Supset.
\end{equation}

\begin{figure}[h]
	
	\centering
	\begin{tikzpicture}[scale=1, every node/.style={circle, draw, inner sep=1.5pt}]
		
		\node[fill=black, label=above:{\small $0$}] (v0) at (0,0) {};
		
		\node[label=above:{\small $m+1$}] (vL) at (-2,0) {};
		\draw (v0) -- (vL);
		
		\node (vU) at (2,1.5) {};
		\draw (v0) -- (vU);
		
		\node (vD) at (2,-1.5) {};
		\draw (v0) -- (vD);
		
		\foreach \y in {-0.5,0,0.5} {
			\fill (2,\y) circle (1.5pt);
		}
		
		\draw [decorate,decoration={brace,amplitude=5pt}]
		(2.2,1.5) -- (2.2,-1.5)
		node[midway, right=5pt, draw=none] {$n$};
		
		\node[draw=none] at (3.5,0) {\Large$\cong$};
		\node[fill=black, label=above:{\small $0$}] (w0) at (7,0) {};
		
		\node[label=left:{\small $m$}] (wL) at (5,0) {};
		\draw (w0) -- (wL);
		
		\node[fill=black, label=above:{\small $-1$}] (wLu) at (5,1) {};
		\draw (wLu) -- (wL);
		
		\node (wU) at (9,1.5) {};
		\draw (w0) -- (wU);
		
		\node (wD) at (9,-1.5) {};
		\draw (w0) -- (wD);
		
		\foreach \y in {-0.5,0,0.5} {
			\fill (9,\y) circle (1.5pt);
		}
		
		\draw [decorate,decoration={brace,amplitude=5pt}]
		(9.2,1.5) -- (9.2,-1.5)
		node[midway, right=5pt, draw=none] {$n$};
		
		\node[draw=none] at (3.5,-4.5) {\Large$\cong$};
		\node[fill=black, label=above:{\small $0$}] (z0) at (7,-4.5) {};
		
		\node[label=left:{\small $m$}] (zL) at (5,-4.5) {};
		\draw (z0) -- (zL);
		
		\node[fill=black, label=above:{\small $-1$}] (zLu1) at (5,-2.5) {};
		\node[fill=black, label=left:{\small $0$}] (zLu2) at (5,-3.5) {};
		\draw (zLu2) -- (zLu1);
		\node[draw=none] at (5,-4) {\Large$\#$};
		\node (zU) at (9,-3) {};
		\draw (z0) -- (zU);
		
		\node (zD) at (9,-6) {};
		\draw (z0) -- (zD);
		
		\foreach \y in {-5,-4.5,-4} {
			\fill (9,\y) circle (1.5pt);
		}
		
		\draw [decorate,decoration={brace,amplitude=5pt}]
		(9.2,-3) -- (9.2,-6)
		node[midway, right=5pt, draw=none] {$n$};
	\end{tikzpicture}
	\caption{Plumbing diagram which describes the cobordism}
	\label{fig:plumbingnk}
	
\end{figure}

Pairing $_\calK\calC_{(n,m+1)}$ with the type-$D$ module of the given knot $\calX_0(K)^\calK$ gives the link Floer complex of $K_{n,n(m+1)}$. By Theorem~\ref{theorem:linkcobsurgery}, the cobordism map $\phi_0$ is represented by the inclusion of a subcomplex of the corresponding link surgery complex. Recall that $\phi_0$ is the cobordism map corresponding to the $\spin^c$ structure $\frs_0$ satisfying $$\langle c_1(\frs_0),[S^2]\rangle=1,$$ where $[S^2]$ is the generator of the second homology class of the cobordism. Because our construction of the cabling module reverses the
orientation convention, as explained in Remark \ref{remark:orientation-}, the corresponding $\spin^c$ structure in the present surgery description satisfies $$\langle c_1(\frs_0),[S^2]\rangle=-1.$$ Then using the correspondence between the Alexander grading of $_\calK\calC_{(n,m+1)}$ and the $\spin^c$ structure on cobordism \ref{eq:idspinA}, we can identify the subcomplex representing $\phi_0$. 

We label the vertices of the plumbing diagram in Figure \ref{fig:plumbingnk} as follows. The first three vertices are
the vertices involved in the framing-change cobordism, beginning with the vertex of weight $-1$, and are labeled $1,2,3$. The remaining $n$ vertices correspond to the components of the cable pattern and are labeled $4,\ldots,n+3$. Under this labeling, the Alexander grading for the corresponding link surgery formula is denoted by $(a_1,a_2,\dots,a_{n+3})\in\bbH(L)$, where
\[
\bbH(L)_i = \frac{\operatorname{lk}(L_i,L-L_i)}{2} +\bbZ.
\]
It will be convenient to remove the linking number shift and write
\[
a_i
=
\frac{\operatorname{lk}(L_i,L-L_i)}{2}+s_i,
\qquad
s_i\in\bbZ.
\]
We denote the resulting integral coordinates by
\[
\ve{s}=(s_1,\ldots,s_{n+3}).
\] In terms of $\ve{s}$, the restriction map $\psi$ is simply given by the projection to the remaining components. When taking connected sum of two links $L_1$ and $L_2$, the grading $\ve{s}$ of the resulting link is formed by concatenating $\ve{s}_1$ and $\ve{s}_2$, except that the entries corresponding to two components involved in the connected sum are added together. Since the vertices labeled from $4$ to $3+n$ correspond to the cable components, the $\spin^c$ structure of the framing change cobordism is determined by the first three coordinates. We therefore write
\[
\bar{\ve{s}}=(s_1,s_2,s_3).
\] The corresponding $\spin^c$ structure is
\[c_1([\bar{\ve{s}}])=[2\bar{\ve{s}}-(-1,m,0)],\]
where the equivalence relation is given by
\begin{equation}\label{eq:eqrelation}
	s_2=0 \quad \mathrm{and}\quad s_1+s_3=ms_2.
\end{equation}
Hence, the generators that induce $\frs_0$ are precisely those with Alexander grading 
\[2(s_1-s_3)=-2\Longrightarrow s_1=s_3-1.\]

Finally, taking the connected sum with the companion knot $K$ changes only the $s_2$-coordinate. This coordinate becomes trivial in the quotient~\eqref{eq:eqrelation}, and therefore does not affect the $\spin^c$ structure relevant to the cobordism map. Consequently, the subcomplex defining $\phi_0$ is already determined by the cabling module ${}_{\calK}\calC_{(n,m+1)}$ and is independent of the companion
knot. Using the connected sum  decomposition of Equation \ref{eq:cobconnectsum}, $\bar{\ve{s}}$ depends only on $(C_{(n,m)}^\calK\otimes (D_0^\calK\Boxtimes{\AbZ_{(-1,0)}}))$. We compute these Alexander gradings in the following subsection.

\subsection{More on Alexander grading and the cobordism map}\label{subsec:cob}
The type-$D$ module of the $1$-framed solid torus can be obtained from the $(0,-1)$-framed Hopf link; that is,
\[\calD_{1}^\calK\cong \calD_{0}^{\calK}\Boxtimes\AbZ_{(-1,0)}.\]
The chain homotopy equivalence is given in \cite[Lemma~18.2]{zemke2021bordered}, which we also review here. In the $\bfI_0$ idempotent, $\calD_{1}^\calK\cdot\bfI_0$ is generated by $\xs^0$, whereas $\calD_{0}^{\calK}\Boxtimes\AbZ_{(-1,0)}$ is represented by the following zigzag.
\[\begin{tikzcd}
	\cdots&{\scrU_{1}^{3}\mathbf{a}}& {\scrU_{1}^{2}\mathbf{a}} & {\scrU_{1}\mathbf{a}} & {\mathbf{a}} & {\mathbf{d}} & {\scrV_{1}\mathbf{d}}&{\scrV_{1}^2\mathbf{d}} &\cdots\\
	\\
	\cdots&{\scrV_{1}^{-4}\mathbf{d}}& {\scrV_{1}^{-3}\mathbf{d}} & {\scrV_{1}^{-2}\mathbf{d}} & \color{red}{\scrV_{1}^{-1}\mathbf{d}} & {\mathbf{d}} & {\scrV_{1}\mathbf{d}}&{\scrV_{1}^{2}\mathbf{d}}&\cdots
	\arrow[from=1-2, to=3-1]
	\arrow["{\scrU_{2}^3\scrV_2^4}"{description}, from=1-2, to=3-2]
	\arrow["{1}"{description}, from=1-3, to=3-2]
	\arrow["{\scrU_{2}^2\scrV_2^3}"{description}, from=1-3, to=3-3]
	\arrow["{1}"{description}, from=1-4, to=3-3]
	\arrow["{\scrU_2\scrV_2^2}"{description}, from=1-4, to=3-4]
	\arrow["{1}"{description}, from=1-5, to=3-4]
	\arrow["{\scrV_2}"{description}, from=1-5, to=3-5]
	\arrow["{\scrU_{2}}"{description}, from=1-6, to=3-5]
	\arrow["{1}"{description}, from=1-6, to=3-6]
	\arrow["{\scrU_{2}^2\scrV_2}"{description}, from=1-7, to=3-6]
	\arrow["{1}"{description}, from=1-7, to=3-7]
	\arrow["{\scrU_{2}^3\scrV_2^2}"{description}, from=1-8, to=3-7]
	\arrow["{1}"{description}, from=1-8, to=3-8]
	\arrow[from=1-9, to=3-8]
\end{tikzcd}\]

As in Lemma \ref{lem:red}, this zigzag is chain homotopic to the module which is generated by the red generator and with trivial structure map. The homotopy equivalence between $\calD_{1}^\calK\cdot\bfI_0$ and $\calD_{0}^{\calK}\Boxtimes\AbZ_{(-1,0)}$ is again given in Lemma \ref{lem:red}. The $\bfI_1$ idempotent is the same as $\bfI_0$ component, except that it is localized with respect to $\scrU_2$.

The cobordism map is then given by the inclusion of the subcomplex, composed with the projection map from $\calD_{0}^{\calK}\Boxtimes\AbZ_{(-1,0)}$ to $\calD_{1}^\calK\cdot\bfI_0$. The descriptions on the two idempotent summands are identical up to localization, so it suffices to carry out the calculation in the $\bfI_0$ summand. 

We first study the Alexander grading of the generators of $_\calK\calC_{(n,m)}$. We use $\ve{s}'$ to denote the grading on the vertices $2$ through $n+3$, and $\bar{\ve{s}}'$ to denote its projection onto the first two components.
\begin{enumerate}
	\item $x_{1,i}^0=x_1^0\otimes \tilde{\scrU}_2^{\abs{i}}$. Recall that $x_1^0=\tilde{\scrV}_{1}^{-1} \tilde{\bfd} | (\bfa_{1},\dots,\bfa_{n})$. Hence, $\ve{s}'(x_1^0)=(0,n,0,\dots,0)$, and $\bar{\ve{s}}'(x_1^0)=(0,n)$. Since $\tilde{\scrU}_2$ decrease the Alexander grading on the first component by $1$, we have $\bar{\ve{s}}'(x_{1,i}^0)=(i,n)$.
	\item $x_{k,i}^{0}$, for $k\in\{2,\dots,n-1\}$ and $i\in \mathbb{Z}$, is different from $x_{1,i}^0$ by an extra $\tilde{\scrV}_{1}^{-k+1}$, which changes the Alexander grading on the second component by $1-k$. Hence, $\bar{\ve{s}}'(x_{k,i}^{0})=(i,n+1-k)$.
	\item For $x_{n,i}^{0}$, $i\in\{0,1,2,\dots\}$, by the similar discussion, we have $\bar{\ve{s}}'(x_{n,i}^{0})=(i,1)$.
\end{enumerate}

We next consider the factor $\calD_0^{\calK}\Boxtimes\AbZ_{(-1,0)}$. Since the projection map is nontrivial only on the bottom of each zigzag, it is enough to compute the Alexander grading of these generators. We denote this grading by $\ve{s}''$. For a bottom generator, $\ve{s}''(\scrV^j\bfd)=(1+j,0)$. Using the connected-sum
grading convention from the previous subsection, the generators corresponding to $\frs_0$ are
\begin{enumerate}
	\item $\scrV^{n-2}\bfd\otimes x_{1,i}^0$.
	\item $\scrV^{n-k-1}\bfd\otimes x_{k,i}^{0}$.
	\item $\scrV^{-1}\bfd\otimes x_{n,i}^{0}$.
\end{enumerate}

Composing with the projection map, the cobordism map is given by
\begin{enumerate}
	\item $\phi_0(x_{1,i}^0)=x_{1,i}^0\otimes\tilde{\scrU}_2^{n-1}\tilde{U}_2^{\frac{(n-2)(n-1)}{2}}$.
	\item $\phi_0(x_{k,i}^0)=x_{k,i}^0\otimes\tilde{\scrU}_2^{n-k}\tilde{U}_2^{\frac{(n-k-1)(n-k)}{2}}$.
	\item $\phi_0(x_{n,i}^{0})=x_{n,i}^{0}.$
\end{enumerate}

\section{Computation of cabling knot Floer homology and colored knot Floer homology}\label{sec:cabling_computations}
In this section, we use the cabling modules to compute several examples of the link Floer homology $\HFL(K_{n,nm})$ and the corresponding colored Floer homology. Recall from Remark \ref{remark:orientation-}, there is an orientation reversal for the cable link. Hence, after the pairing, we need to apply the elliptic bimodule $_\calK[E]^\calK$ to recover the desired
orientation convention. By definition, this amounts to interchanging $\scrU_i$ and $\scrV_i$ for each component.
\subsection{Unknot}
The type-$D$ module$\calX_0^\calK(U)$ has two generators $x^0$ and $x^1$ which are in idempotent $\bfI_0$ and $\bfI_1$ respectively. The structure map takes
\[\delta^1(x^0)=x^1\otimes\sigma+x^1\otimes\tau.\]
Hence, as a $\bbF[U]$ module, $\calX_0^\calK(U)\Boxtimes {_\calK\calC_{(n,m)}}$ has the same set of generators as ${_\calK\calC_{(n,m)}}$. For simplicity, we use the notation from ${_\calK\calC_{(n,m)}}$ to denote the corresponding generator in $\calX_0^\calK(U)\Boxtimes {_\calK\calC_{(n,m)}}$. See Figure \ref{fig:C(3,1)} for the set of generators of $\calX_0^\calK(U)\Boxtimes {_\calK\calC_{(3,1)}}$. This chain complex admits a splitting as direct sums of zigzags. There are three types of zigzags, which are depicted by different colors in Figure \ref{fig:C(3,1)}.
\begin{figure}[h]
	\centering
	\begin{adjustbox}{ max width=0.9\textheight}
		\begin{tikzcd}[column sep=0cm, row sep=2cm]
			\cdots & x_{1,-3}^0 & x_{1,-2}^0 & x_{1,-1}^0 & x_{1,0}^0 &&& \cdots &x_{2,-2}^0 & x_{2,-1}^0 & x_{2,0}^0 & x_{2,1}^0 & x_{2,2}^0 &\cdots &&& \quad  & x_{3,0}^0  & x_{3,1}^0  & x_{3,2}^0 & x_{3,3}^0 &\cdots \\
			&&&&&&& \cdots & x_{2,-2}^1 & x_{2,-1}^1 & x_{2,0}^1 & x_{2,1}^1 & x_{2,2}^1 &\cdots &&& x_{3,-1}^1 &  x_{3,0}^1  & x_{3,1}^1  & x_{3,2}^1 & x_{3,3}^1&\cdots
			\arrow[from=1-2, to=2-9,red]
			\arrow[from=1-9, to=2-9,red]
			\arrow[from=1-9, to=2-17,red]
			\arrow[from=1-3, to=2-10,blue]
			\arrow[from=1-10, to=2-10,blue]
			\arrow[from=1-10, to=2-18,blue]
			\arrow[from=1-18, to=2-18,blue]
			\arrow[from=1-13, to=2-13]
			\arrow[from=1-13, to=2-21]
			\arrow[from=1-21, to=2-21]
		\end{tikzcd}
	\end{adjustbox}
	\caption{$\calX_0^\calK(U)\Boxtimes {_\calK\calC_{(3,1)}}$ and different type of zigzags.}
	\label{fig:C(3,1)}
\end{figure}

\begin{enumerate}
	\item The red zigzags end at a generator $x_{n,i}^1$, for $i<0$ at bottom. Hence, this kind of zigzags contain an even number of generators. Since the differentials do not decrease the $i$ index, all generators in these zigzags have $i<0$. By the definition of $x_{n,i}^1$, these generators are multiplied by a power of $\tilde{\scrU}_2$. Hence, the differentials arising from $\tau$ are identity maps. Thus, this type of zigzags consists of canceling pairs and have trivial homology.
	\item The black zigzags start at a generator $x_{2,i}^1$, for $i>m$ at bottom. Similar to the red zigzags, there are even number of generators in this kind of zigzags. In this case, all the generators have $i>0$, which implies that they are multiplied by a power of $\tilde{\scrV}_2$ and the differentials arising from $\sigma$ are identity maps. Thus, the black zigzags have trivial homology, too.
	\item The blue zigzags have an odd number of generators and nontrivial homology. The homology of each blue zigzag is rank one as a $\bbF[U]$,  and it is generated by a linear combination of generators on top of the zigzag. Note that, fixing the framing $m$, the blue zigzag containing $x_{n,i}^0$ for $0\leq i \leq m$ is chain homotopy equivalent, by the Gaussian cancellation lemma, to the chain complex generated by $x_{n,i}^0$ with trivial differential. There are also  blue zigzags containing $x_{n,i}^0$ for $m<i\leq (n-1)m$. For example, in Figure \ref{fig:C(3,1)}, there is a zigzag containing $x_{3,2}^0$. For these zigzags, applying Gaussian cancellation again, the homology is generated by a suitable $U$-multiple of $x_{n,i}^0$.
\end{enumerate}

This determines $\HFL(U_{n,nm})$. To compute the colored knot Floer
homology, it remains to understand the cobordism maps on the blue
zigzags. Recall that $\phi_0$ acts as the identity on the generators
$x_{n,i}^0$. Moreover, for every fixed $i$, once $m$ is sufficiently
large the class represented by $x_{n,i}^0$ is the surviving generator
of its blue zigzag. Thus these classes stabilize in the direct system.

The relations among the stable generators are those described in
Subsection~\ref{subsec:An0}. After applying the elliptic bimodule
${}_{\calK}[E]^{\calK}$ on each link component, which interchanges
$\scrU_i$ and $\scrV_i$, these relations agree with
Equation~\ref{eq:Arelation}. We therefore recover the computation of
the colored knot Floer homology of the unknot from
\cite{alishahi2025colored}.
\subsection{Algebraic lemmas for the computations}
\label{subsec:alglem}
In this subsection, we establish two algebraic lemmas used in the computations below: a reduction
of the localized knot complex and the flatness of the cabling summands.
Throughout this subsection, set
\[
R=\bbF[\tilde{\scrU}_{2},\tilde{\scrV}_{2}],
\qquad S=R[\tilde{\scrV}_{2}^{-1}],
\qquad C=\CFKm(K),
\]
and write $C^{0}=C$ and $C^{1}=C\otimes_R S$ for the knot complexes
in the two idempotents.

\begin{lem}\label{lem:localizedknot}
	For a knot $K\subset S^{3}$, the complex $C^{1}$ admits a
	deformation retraction onto a rank one complex $S\langle\theta\rangle$ with trivial differential.
\end{lem}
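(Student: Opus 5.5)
The plan is to use that, after inverting $\tilde{\scrV}_2$, the complex $C^1=C\otimes_R S$ becomes a free complex over $S=\bbF[\tilde{\scrU}_2,\tilde{\scrV}_2,\tilde{\scrV}_2^{-1}]\cong \bbF[U,\tilde{\scrV}_2^{\pm1}]$, with $U=\tilde{\scrU}_2\tilde{\scrV}_2$. Its homology is $\widehat{HF}$-type information of $S^3$ tensored up, which has rank one. Concretely, I will reduce to a statement about $C\otimes_R S$ in Alexander grading zero. The Alexander grading of $C^1$ is unbounded, and multiplication by the unit $\tilde{\scrV}_2$ shifts it by one. Restricting to the $A=0$ part therefore gives an $\bbF[U]$-complex $C^1_{A=0}$ with $C^1\cong C^1_{A=0}\otimes_{\bbF[U]}S$. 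By the standard identification, $C^1_{A=0}$ is the complex computing $\HFm(S^3)$ from $\CFKm(K)$ (the "$\scrV=1$" specialization, i.e.\ $C\otimes \bbF[\scrU,\scrV,\scrV^{-1}]$ in a fixed grading is $\CFm(S^3)$ after the usual change of variables).

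The key steps, in order, are as follows.

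\textbf{Step 1.} Fix a free $R$-basis of $C$, chosen to be reduced (the differential has no unit coefficients), which is possible for $\CFKm(K)$ up to homotopy equivalence. Then pass to the basis $\{\tilde{\scrV}_2^{-A(x)}x\}$ of $C^1$, which lies in Alexander grading zero. In this basis $C^1=C'\otimes_{\bbF[U]}S$, where $C'$ is a finitely generated free $\bbF[U]$-complex. Differential coefficients $\tilde{\scrU}_2^a\tilde{\scrV}_2^b$ become $U^a$ times a unit.

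\textbf{Step 2.} Observe that $C'$ is a finitely generated free $\bbF[U]$-complex homotopy equivalent to $\CFm(S^3)$. Its homology is therefore $\bbF[U]$, free of rank one, and over the PID $\bbF[U]$ it is homotopy equivalent to $\bbF[U]\langle\theta\rangle$ with zero differential. I would get this explicitly by taking a reduced model for $C'$ and Gaussian-cancelling all arrows with unit coefficients; since $U$ is a non-unit, a reduced model with homology $\bbF[U]$ must be rank one. This yields maps $i,\pi,h$ satisfying the deformation retraction identities $\pi i=\mathrm{id}$, $i\pi=\mathrm{id}+\partial h+h\partial$, and the side conditions $h^2=0$, $hi=0$, $\pi h=0$. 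The side conditions can always be arranged by the standard modification of $h$.

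\textbf{Step 3.} Tensor the data from Step 2 with $S$ over $\bbF[U]$ to obtain the deformation retraction $C^1\to S\langle\theta\rangle$.

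The main obstacle is Step 2's identification $H_*(C')\cong\bbF[U]$. This amounts to the fact that the large-surgery or localization of $\CFKm$ at $\scrV$ recovers $\CFm(S^3)$; equivalently, it follows from the knot-surgery description of Subsection \ref{subsec:surgeryformula}, where $\bfI_1$ corresponds to $\boldsymbol{CF}^-(\alphas,\betas_1^{E_1})$, a Heegaard Floer complex of $S^3$ with twisted-trivial coefficients. I would invoke that identification, cite $\HFm(S^3)=\bbF[U]$, and then argue freeness. Since $C'$ is free and finitely generated and its homology is free, the splitting over the PID gives the retraction. A secondary check is continuity of $h$ with respect to the topologies of Subsection \ref{subsec:lineartopology}, which is automatic because everything here is finitely generated over $S$.
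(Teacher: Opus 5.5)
Your argument is correct, but it takes a different route from the paper. The paper chooses a vertically simplified basis for $C$, so that the $\tilde{\scrU}_2$-free part of the differential pairs the generators as $\partial_{\mathrm v}a_j=\tilde{\scrV}_2^{\ell_j}b_j$, with exactly one unpaired generator $\theta$. After inverting $\tilde{\scrV}_2$, every pair is joined by a unit and is removed by Gaussian cancellation. You instead use the Alexander grading to write $C^1\cong C'\otimes_{\bbF}\bbF[\tilde{\scrV}_2^{\pm1}]$ with $C'=C|_{\tilde{\scrV}_2=1}$; in the rescaled basis the coefficients are exactly $U^a$. You then identify $C'$ with $\boldsymbol{CF}^-(S^3)$ and conclude from $\HFm(S^3)\cong\bbF[U]$ together with the structure of finitely generated free complexes over the PID $\bbF[U]$.

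Both proofs rest on $S^3$ having rank-one Floer homology. The paper uses this through the vertical complex: rank one of $\widehat{\mathrm{HF}}(S^3)$ is what forces a single unpaired generator. You use it through the $\tilde{\scrV}_2=1$ specialization.

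Your version is more conceptual and needs no special basis. In particular, it avoids a piece of bookkeeping the paper leaves implicit: that successive cancellations keep the remaining vertical coefficients units, which holds by bigrading homogeneity. The paper's version is more explicit. It names $\theta$ as the unpaired vertical generator and writes the retraction in the original basis, which is what is used later, e.g.\ $\theta=b_0$ and $\pi_1(b_s)=\tilde{\scrU}_2^{p_s}\tilde{\scrV}_2^{-q_s}b_0$ for staircases. From your abstract splitting you would still have to carry out the cancellation in $C'$ to obtain such formulas.

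The reducedness assumption in your Step 1 and the alternative appeal to the knot surgery formula are not needed.
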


\begin{proof}
	Choose a vertically simplified basis for $C$. Let $\partial_{\mathrm v}$ denote the
	part of the differential containing no factor of $\tilde{\scrU}_{2}$.
	In this basis, the generators consist of vertical pairs
	\[
	\partial_{\mathrm v}a_j=\tilde{\scrV}_{2}^{\ell_j}b_j,
	\qquad
	\partial_{\mathrm v}b_j=0,
	\qquad
	\ell_j\geq 0,
	\]
	and a single unpaired generator $\theta$. After tensoring with $S$, each coefficient $\tilde{\scrV}_2^{\ell_j}$ becomes a unit. Hence every vertical pair is contractible, with contracting homotopy
    \[
      b_j\longmapsto\tilde{\scrV}_2^{-\ell_j}a_j.
    \]
    Applying Gaussian cancellation successively to these pairs gives a deformation retraction of $C^1$ onto the complex generated by a single element $\theta$.
\end{proof}

Let $M_k^\varepsilon$ be the $R_n$-submodule of ${}_{\calK}\calC_{(n,m)}$ generated by all $x_{k,i}^\varepsilon$.

\begin{lem}\label{lem:cablingflat}
	The module $M_k^0$ is flat over $R$. The module $M_k^1$ is flat over
	$S$, and hence also flat over $R$.
\end{lem}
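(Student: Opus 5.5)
The plan is to exhibit each $M_k^\varepsilon$, as a module over $R=\bbF[\tilde{\scrU}_2,\tilde{\scrV}_2]$ (resp.\ $S$), as a filtered colimit or direct sum of free rank-one modules. Flatness then follows because direct sums and filtered colimits of flat modules are flat. Throughout, $R$ acts through the identifications of Subsection~\ref{subsec:An0}: $\tilde{\scrV}_2$ acts by $(\scrU\cdots\scrU)_n$ up to the $\tilde U_2$-factors recorded in the relations, $\tilde{\scrU}_2$ acts by $(\scrV\cdots\scrV)_n$, and $\tilde U_2=\scrU_{1,i}\scrV_{1,i}$ by \eqref{eq:Urelation}.

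First I would return to the description of Propositions~\ref{prop:1stbasis} and~\ref{prop:2ndbasis}, before the generators were enlarged. There $M_k^0$ is the $R_n\otimes R$-submodule generated by the single class $x_k^0$. By construction, $x_{k,i}^0=x_k^0\otimes\tilde{\scrV}_2^{i}$ for $i\ge0$ and $x_k^0\otimes\tilde{\scrU}_2^{|i|}$ for $i<0$. I would show that the $R$-submodule spanned by $R_n$-monomials applied to $x_k^0$ decomposes according to the residual $R_n$-monomial. Concretely, I would pick a normal form in which each monomial $\prod\scrU_{1,j}^{a_j}\scrV_{1,j}^{b_j}$ is reduced, using \eqref{eq:Urelation} and the relations \eqref{coeffrel{3.1}} and \eqref{coeffrel{3.2}}, to a representative with $\min(a_j,b_j)=0$ and with not all $a_j$ (resp.\ all $b_j$) positive. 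The full products $(\scrU\cdots\scrU)_n$ and $(\scrV\cdots\scrV)_n$ are absorbed into $R$.

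The next step is to check that each normal-form monomial $\mu$ generates a copy $R\cdot\mu x_k^0$ that is free of rank one over $R$, and that distinct normal forms give independent summands. This is the torsion-freeness check. The underlying chain-level model is a direct sum of zigzags, and by Lemma~\ref{lem:red} each mixed zigzag has homology $\bbF$ on a single red generator. Hence $\mu x_k^0$ and its $R$-multiples correspond to distinct red generators, with distinct $(\tilde{\scrU}_1,\tilde{\scrV}_1)$-exponents together with the tuple. Multiplication by $\tilde{\scrU}_2$ or $\tilde{\scrV}_2$ moves injectively between zigzags, because the gradings $\bar{\ve s}'$ computed in Subsection~\ref{subsec:cob} strictly change. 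Thus no element of $R$ annihilates a nonzero class, and $M_k^0\cong\bigoplus_\mu R$ is free, hence flat. If the normal-form independence is hard to see directly, the fallback is to write $M_k^0$ as the directed union of finitely generated $R$-submodules. Each of these is torsion-free over the two-variable polynomial ring, but torsion-free alone is not flat in dimension two. So in this fallback I would additionally argue that each such submodule is a union of modules $R\cdot m\cong R$ nested by multiplication by monomials, which is a filtered colimit of free modules.

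For $M_k^1$ the same argument runs over $S=R[\tilde{\scrV}_2^{-1}]$. By Proposition~\ref{prop:2ndbasis}, the generators $x_{k,i}^1=x_k^1\otimes\tilde{\scrV}_2^{i}$ for $i\in\bbZ$ exhibit the $\tilde{\scrV}_2$-action as invertible shifts. Using the single relation family $(\scrU\cdots\scrU)_{n-k+1}x^1_{k,i}=(\scrV\cdots\scrV)_{k-1}x^1_{k,i+1}$, one gets $M_k^1\cong M_k^0\otimes_R S$ (for $k\ge2$), or directly a free $S$-module, hence $S$-flat. Flatness over $R$ then follows since $S$ is a localization of $R$, hence flat over $R$, and flatness is transitive.

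I expect the main obstacle to be the independence and torsion-freeness step. The relations mix $R_n$-monomials with powers of $\tilde U_2$, and one must make sure that no nontrivial element of $R$ kills a class. I would handle this by working at chain level with the explicit zigzag models and the multigrading, rather than with the presentation alone.
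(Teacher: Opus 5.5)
\textbf{There is a gap in the key step.} Your target is the right one, and it is also how the paper argues: show from the zigzag model that $M_k^0$ is a direct sum of copies of $R$. But the step that is supposed to prove freeness puts the $R$-action in the wrong place.

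The zigzag arrows are type-$D$ outputs in $R=\bfI_0\cdot\calK\cdot\bfI_0$. After boxing with $[\bbI^{\Supset}]$, each generator therefore carries its own copy of $R$. A mixed zigzag thus becomes a complex of free $R$-modules, and by Lemma~\ref{lem:red} its homology is $R\cdot[\text{red generator}]$, free of rank one, not $\bbF$. Multiplication by $\tilde{\scrU}_2$ and $\tilde{\scrV}_2$ stays inside that summand; it is the $R_n$-action that moves between zigzags. For example, $(\scrU\cdots\scrU)_n x_k^0$ is the bottom generator immediately to the left of $x_k^0$ in the same zigzag. That is exactly why it equals $\tilde U_2^{k-1}\tilde{\scrV}_2\,x_k^0$ in homology.

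Consequently, two of your claims are false: that $R$-multiples of $\mu x_k^0$ are distinct red generators, and that $\tilde{\scrU}_2,\tilde{\scrV}_2$ move injectively between zigzags. The grading $\bar{\ve s}'$ cannot certify torsion-freeness in any case.

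Your normal form is also too coarse. For $n=2$, relation \eqref{coeffrel{3.1}} gives $\scrV_{1,1}x_1^0=\tilde{\scrU}_2\,\scrU_{1,2}x_1^0$, yet both monomials satisfy your conditions. So ``distinct normal forms give independent summands'' fails as stated. The fallback does not help either: a finitely generated module over the Noetherian ring $R$ that is a directed union of cyclic free submodules is itself cyclic.

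\textbf{The missing observation is the paper's proof.} The red generators form an $R$-basis of the $\bfI_0$ homology, one free summand per mixed zigzag. By the computation in the proof of Proposition~\ref{prop:1stbasis}, each red generator equals $\mu\,x_k^0$ for a monomial $\mu$ and a unique index $k=m+t$. Hence $M_k^0$ is the free $R$-module on the red generators of index $k$. No separate normal form is needed, since the red generators already are the normal forms.

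With this in place, your route for $M_k^1$ through $M_k^1\cong M_k^0\otimes_R S$ for $k\geq 2$ is sound. Base change of a free module is free, and your exclusion of $k=1$ is necessary. After inverting $\tilde{\scrV}_2$, only the index-one zigzags become contractible in the completion, because their left tails have all vertical labels equal to $\tilde{\scrV}_2$. Transitivity of flatness through the localization $S$ then finishes the argument as you say.
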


\begin{proof}
	By the zigzag description in Subsection~\ref{subsec:Dn0}, the surviving
	red generators form a basis for the homology over $R$ in the $\bfI_0$
	summand and over $S$ in the $\bfI_1$ summand. This decomposition respects
	the first index $k$. Thus $M_k^0$ and $M_k^1$ are direct sums of copies of
	$R$ and $S$, respectively, and hence are free, and therefore flat, over
	these rings. Since $S$ is a localization of $R$, $M_k^1$ is also flat
	over $R$.
\end{proof}

Tensoring the deformation retraction and applying flatness give
\begin{equation*}\label{eq:algreduction}
	C^1\otimes_S M_k^1\simeq M_k^1\langle\theta\rangle,
	\qquad
	H_*(C^0\otimes_R M_k^0)
	\cong H_*(C^0)\otimes_R M_k^0.
\end{equation*}

\subsection{$L$-space knots}
Recall that a rational homology sphere $Y$ is an L-space if $\dim_{\bbF}\widehat{\mathrm{HF}}(Y)=|H_1(Y;\bbZ)|$. A link $L\subseteq S^3$ is called an L-space link if all sufficiently large positive surgeries on $L$ are L-spaces. L-space link is characterized by link Floer homology. If $L\subseteq S^3$, then L is an L-space link iff its link Floer homology has no $U$-torsion, see \cite[Subsection~1.3]{borodzik2024lattice}.

When $K$ is an L-space knot in $S^3$, $\mathcal{CFK}^-(K)$ admits a staircase model \cite{ozsvath2005knot} determined by the symmetrized Alexander polynomial. Write
\[
\Delta_K(z)=\sum_{j=0}^{2r}(-1)^j z^{\alpha_j},\qquad g=\alpha_0>\alpha_1>\cdots>\alpha_{2r}=-g,
\]
where $\alpha_{2r-j}=-\alpha_j$ and $g:=g(K)$. The staircase has bottom generators $b_0,\dots,b_r$ and top generators $t_1,\dots,t_r$, with Alexander gradings
\[
A(b_s)=\alpha_{2s},\qquad A(t_s)=\alpha_{2s-1}.
\]
Its differential is
\begin{equation}\label{eq:Lspacestaircase}
	\partial(t_s)=\tilde{\scrU}_2^{a_s}b_{s-1}+\tilde{\scrV}_2^{c_s}b_s,\qquad \partial(b_s)=0,
\end{equation}
where
\[
a_s=\alpha_{2s-2}-\alpha_{2s-1}, \qquad c_s=\alpha_{2s-1}-\alpha_{2s}.
\]
The symmetry of the Alexander polynomial gives
\[
A(b_{r-s})=-A(b_s),\qquad a_s=c_{r+1-s}.
\]
The bottom generators of the staircase represent the homology classes of $\mathcal{CFK}^-(K)$, with relations given by \eqref{eq:Lspacestaircase}.

\begin{figure}[ht]
	\centering
	\begin{tikzpicture}[
		x=1cm,y=1cm,>=stealth,
		vertex/.style={circle,fill,inner sep=1.6pt},
		every label/.style={font=\small}]
		\node[vertex,label=left:{$b_0$}] (b0) at (0,0) {};
		\node[vertex,label=above:{$t_1$}] (t1) at (2.2,0) {};
		\node[vertex,label=below left:{$b_1$}] (b1) at (2.2,-1.15) {};
		\node at (3.25,-1.65) {$\ddots$};
		\node[vertex,label=below left:{$b_{r-1}$}] (bp) at (4.3,-2.15) {};
		\node[vertex,label=above:{$t_r$}] (tr) at (5.45,-2.15) {};
		\node[vertex,label=below:{$b_r$}] (br) at (5.45,-4.35) {};
		\draw[->] (t1) -- node[above] {$\tilde{\scrU}_2^{a_1}$} (b0);
		\draw[->] (t1) -- node[right] {$\tilde{\scrV}_2^{c_1}$} (b1);
		\draw[->] (tr) -- node[above] {$\tilde{\scrU}_2^{a_r}$} (bp);
		\draw[->] (tr) -- node[right] {$\tilde{\scrV}_2^{c_r}$} (br);
	\end{tikzpicture}
	\caption{The staircase model. Each arrow is a differential term; the step lengths are schematic.}
	\label{fig:Lspacestaircase}
\end{figure}

With our knot surgery conventions, the flip map
$\tau:\mathcal{CFK}^-(K)\longrightarrow \mathcal{CFK}^-(K)[\tilde{\scrV}_2^{-1}]$ is given by
\begin{equation}\label{eq:Lspaceflip}
	\tau(b_s)=\tilde{\scrV}_2^{2A(b_s)}b_{r-s}.
\end{equation}
The factor $\tilde{\scrV}_2^{2k}$ makes the flip preserve Alexander grading.
The map extends to coefficients by
\[
\tau(\tilde{\scrU}_2x)=\tilde{\scrV}_2^{-1}\tau(x),\qquad\tau(\tilde{\scrV}_2x)=\tilde{\scrU}_2\tilde{\scrV}_2^2\tau(x).
\]
For framing $m$, the flip term acquires the additional factor $\tilde{\scrV}_2^m$, so its value on $b_s$ is
\begin{equation*}\label{eq:Lspaceframedflip}
	\tilde{\scrV}_2^m\tau(b_s)=\tilde{\scrV}_2^{m+2A(b_s)}b_{r-s}.
\end{equation*}

We now describe the deformation retraction of the localized staircase explicitly. We follow the notation from Lemma \ref{lem:localizedknot}. Set
\[
p_s=\sum_{j=1}^{s}a_j,\qquad q_s=\sum_{j=1}^{s}c_j,
\]
with $p_0=q_0=0$. The staircase symmetry implies
\[
p_r=q_r=g,\qquad p_{r-s}=g-q_s,\qquad q_{r-s}=g-p_s.
\]
Moreover,
\[
A(b_s)=g-p_s-q_s.
\]
Recall from Lemma \ref{lem:localizedknot}, $C^1=C\otimes_R S$ deformation retracts onto $S\langle b_0\rangle$ with trivial differential:
\[
S\langle b_0\rangle\underset{\pi_1}{\overset{i_1}{\rightleftarrows}} C^1,\qquad i_1(b_0)=b_0.
\]
where the projection is given by
\begin{equation*}\label{eq:Lspaceretraction}
	\pi_1(b_s)=\tilde{\scrU}_2^{p_s}\tilde{\scrV}_2^{-q_s}b_0,\qquad\pi_1(t_s)=0.
\end{equation*}

Composing the flip map with this projection gives
\[
\overline{\tau}=\pi_1\circ\tau:C\longrightarrow S\langle b_0\rangle.
\]
Using \eqref{eq:Lspaceflip}, we obtain
\begin{align*}
	\overline{\tau}(b_s)&=\tilde{\scrV}_2^{2A(b_s)}\tilde{\scrU}_2^{p_{r-s}}\tilde{\scrV}_2^{-q_{r-s}}b_0 \notag\\
	&=\tilde{\scrU}_2^{g-q_s}\tilde{\scrV}_2^{g-p_s-2q_s}b_0.
	\label{eq:Lspacereducedflip}
\end{align*}
In particular,
\[
\overline{\tau}(b_0)=\tilde U_2^g b_0,\qquad\overline{\tau}(b_r)=\tilde{\scrV}_2^{-2g}b_0,
\]
where $\tilde U_2=\tilde{\scrU}_2\tilde{\scrV}_2$.

Let $\iota:C\to C^1$ denote the localization map. After reducing the target to $S\langle b_0\rangle$, the inclusion and the framed flip therefore take the form
\begin{equation}\label{eq:Lspacereducedmaps}
	\begin{aligned}
		(\pi_1\circ\iota)(b_s)&=\tilde U_2^{p_s}\tilde{\scrV}_2^{A(b_s)-g}b_0,\\
		\pi_1\bigl(\tilde{\scrV}_2^m\tau(b_s)\bigr)&=\tilde U_2^{g-q_s}\tilde{\scrV}_2^{m+A(b_s)-g}b_0.
	\end{aligned}
\end{equation}

We now compute the cabling homology. Assume that $n\geq2$ and that $K$ is a nontrivial $L$-space knot, so $g>0$. The unknot was treated above, while the case $n=1$ is simply the original knot. By Lemma~\ref{lem:localizedknot}, we may replace each localized knot complex in the $\bfI_1$ summand by its rank one model generated by $b_0$. The resulting complex has underlying module
\[
\calD_m=\bigoplus_{k=1}^{n}(C\otimes_R M_k^0)\;\oplus\;\bigoplus_{k=2}^{n}M_k^1\langle b_0\rangle.
\]
Its differential consists of the staircase differentials in the $\bfI_0$ summand and the inclusion and framed flip maps.
Set
\[
y_{k,j}=b_0\otimes x_{k,j}^1.
\]
The generators $y_{k,j}$ are cycles. Combining \eqref{eq:Lspacereducedmaps} with the structure maps of the cabling module gives
\begin{equation}\label{eq:Lspacecablingdiff}
	\begin{aligned}
		d_m(b_s\otimes x_{k,i}^0)
		={}&U^{p_s+\max\{-i,0\}}\,
		y_{k,i-p_s-q_s}\\
		&+U^{g-q_s+\max\{i,0\}}\,
		y_{k+1,i+m-p_s-q_s}.
	\end{aligned}
\end{equation}
Here the first term corresponds to the inclusion map and is omitted when $k=1$. The second corresponds to the flip map and is omitted when $k=n$. The top staircase generators retain their internal
differentials and have no component in the reduced $\bfI_1$ summand.

Suppose first that $m\leq2g-2$. We claim that
\[
[y_{n,-1}]=[b_0\otimes\tilde{\scrV}_2^{-1}x_n^1]
\]
is a nonzero $U$-torsion class. To prove nonvanishing, project the differential to the rightmost $\bfI_1$ summand and reduce modulo
\[
\mathfrak a=(\scrU_1,\ldots,\scrU_n,\scrV_1,\ldots,\scrV_n).
\]
By the coefficient relations in Subsection~\ref{subsec:An0}, the images of the generators $y_{n,j}$ form an $\bbF$-basis of this quotient. In particular, $U=0$ there. Therefore, an inclusion term in
\eqref{eq:Lspacecablingdiff} survives only when $s=0$ and $i\geq0$; its target has index $j\geq0$. A flip term survives only when $s=r$ and $i\leq0$; its target has index $j=i+m-2g\leq m-2g$. Thus $y_{n,-1}$ is not in the image when $m\leq2g-2$. On the other hand,
\[
d_m(b_r\otimes x_{n,2g-1}^0)=U^g y_{n,-1}.
\]
Consequently,
\[
[y_{n,-1}]\neq0,\qquad U^g[y_{n,-1}]=0,
\]
and $K_{n,nm}$ is not an $L$-space link.

Now suppose that $m\geq2g-1$. We show that every generator in the $\bfI_1$ summand is a boundary. The two endpoint staircase generators give
\begin{align}
	d_m(b_0\otimes x_{k,i}^0)
	&=y_{k,i}+U^{g+i}y_{k+1,i+m},
	&& i\geq0,\quad 2\leq k\leq n,
	\label{eq:Lspacepositivecancel}\\
	d_m(b_r\otimes x_{k,i}^0)
	&=U^{g-i}y_{k,i-2g}
	+y_{k+1,i+m-2g},
	&& i\leq0,\quad 1\leq k<n.
	\label{eq:Lspacenegativecancel}
\end{align}
As before, nonexistent endpoint terms are omitted.

Starting with $k=n$ and proceeding to the left, \eqref{eq:Lspacepositivecancel} shows that every $y_{k,j}$ with $j\geq0$ is a boundary. Similarly, starting with $k=1$ and proceeding to the right,
\eqref{eq:Lspacenegativecancel} shows that every $y_{k,j}$ with $j\leq m-2g$ is a boundary.
Since
\[
m-2g\geq-1,
\]
these two ranges cover all integer indices. In particular, the argument includes the endpoint $m=2g-1$.

It remains to compute the surviving homology in the $\bfI_0$ summand. Set
\[
H_K=H_*(C),
\qquad
A_k=H_K\otimes_R M_k^0.
\]
By Lemma~\ref{lem:cablingflat},
\[
H_*(C\otimes_R M_k^0)\cong A_k.
\]
Since the staircase is a free resolution of $H_K$, these modules are generated by the bottom staircase generators, subject to the relations
\[
\tilde{\scrU}_2^{a_s}[b_{s-1}]
=\tilde{\scrV}_2^{c_s}[b_s].
\]
The top generators impose these relations and contribute no additional homology.

The inclusion and flip maps induce
\[
F_m:\bigoplus_{k=1}^{n}A_k\longrightarrow\bigoplus_{k=2}^{n}M_k^1\langle b_0\rangle.
\]
The cancellation above proves that $F_m$ is surjective. Hence
\begin{equation*}\label{eq:Lspacecablingkernel}
	H_*(\calD_m)\cong\ker F_m,
	\qquad m\geq2g-1.
\end{equation*}
This is the same elimination used in the unknot computation: the $\bfI_1$ generators disappear, and the surviving classes are combinations of bottom staircase generators with cabling coefficients.

Multiplication by $U$ is injective on $H_K$. Flatness of $M_k^0$ preserves this injectivity, so every $A_k$ is $U$-torsion-free. Its submodule $\ker F_m$ is therefore also $U$-torsion-free. By the torsion-free characterization of $L$-space links, we conclude that
\[
K_{n,nm}\text{ is an }L\text{-space link}\quad\Longleftrightarrow\quad m\geq2g(K)-1.
\]
Thus the cabling computation gives another proof of \cite[Theorem~2]{gorsky2017cable} for this family of cable links.

We next take the colimit. For $m\geq2g-1$, projection onto the rightmost $\bfI_0$ summand gives a map
\[
\rho_m:H_*(\calD_m)\cong\ker F_m \longrightarrow A_n=H_K\otimes_R M_n^0.
\]
Similar to the computation of unknot from the previous subsection, by the study of different kinds of zigzags, one can show that this map is injective and every fixed generator of $A_n$ lies in the image of $\rho_m$ for all sufficiently large $m$.  Finally, the cobordism map acts trivially on the knot factor and
satisfies
\[
\phi_0(x_{n,i}^0)=x_{n,i}^0.
\]
Consequently,
\[
\rho_{m+1}\circ(\phi_0)_*=\rho_m.
\]
The images of the injective maps $\rho_m$ therefore form an increasing family of submodules whose union is all of $A_n$. We obtain
\begin{equation*}\label{eq:Lspacecoloredlimit}
	\varinjlim_m H_*(\calD_m)\cong H_K\otimes_R M_n^0.
\end{equation*}

Recall from the previous subsection, $M_n^0$ is equivalent to Alishahi-Gorsky-Liu's colored unknot algebra after applying the elliptic bimodule. Hence, our computation recovers \cite[Theorems~6.6]{alishahi2025colored}.

\subsection{The figure-eight knot and the left-handed trefoil}
\label{subsec:figureeightlefttrefoil}
Write
\[
\mathscr C_m(K)=\calX_0^\calK(K)\Boxtimes{}_{\calK}\calC_{n,m}
\]
for the paired complex before the elliptic interchange.

We first compute the cabling homology of the figure-eight knot. Its knot Floer complex splits as
\begin{equation}\label{eq:figureeightsplitting}
	\CFKm(4_1)\simeq R\langle z\rangle\oplus Q,
\end{equation}
where $R\langle z\rangle$ is the unknot complex and $Q$ is the square complex generated by $a,b,c,d$, with
\begin{equation*}\label{eq:figureeightsquare}
	\partial a=\tilde{\scrU}_2b+\tilde{\scrV}_2c,
	\qquad
	\partial b=\tilde{\scrV}_2d,
	\qquad
	\partial c=\tilde{\scrU}_2d,
	\qquad
	\partial d=0.
\end{equation*}
The square is the Koszul free resolution of
$R/(\tilde{\scrU}_2,\tilde{\scrV}_2)$, so
\[
H_*(Q)\cong R/(\tilde{\scrU}_2,\tilde{\scrV}_2)\langle[d]\rangle.
\]
In particular, $Q$ becomes contractible after inverting either $\tilde{\scrU}_2$ or $\tilde{\scrV}_2$.

The inclusion preserves the splitting \eqref{eq:figureeightsplitting}, and the contractibility of the localized square allows us to choose a representative of the flip that preserves it as well. The summand $R\langle z\rangle$ therefore gives the unknot cabling complex. For the square summand, we have
\[
Q\otimes_R S\simeq0.
\]
Thus its $\bfI_1$ copies can be contracted. Since there are no maps from $\bfI_1$ back to $\bfI_0$, this leaves the separate complexes
\[
Q\otimes_R M_k^0,\qquad 1\leq k\leq n.
\]
By Lemma~\ref{lem:cablingflat},
\begin{equation*}\label{eq:figureeightTor}
	H_*(Q\otimes_R M_k^0)\cong\frac{M_k^0}{(\tilde{\scrU}_2,\tilde{\scrV}_2)M_k^0}\langle[d]\rangle.
\end{equation*}

Let $I_j^{\scrU}\subset R_n$ be the ideal generated by all square-free monomials of degree $j$ in
$\scrU_1,\ldots,\scrU_n$, and define $I_j^{\scrV}$ similarly. Setting $\tilde{\scrU}_2=\tilde{\scrV}_2=0$ kills every $x_{k,i}^0$ with $i\neq0$. The remaining class is generated by $x_{k,0}^0$, with coefficient module
\begin{equation*}\label{eq:figureeightTk}
	T_k:=	\frac{M_k^0}{(\tilde{\scrU}_2,\tilde{\scrV}_2)M_k^0}\cong\frac{R_n}{I_{n-k+1}^{\scrU}+I_k^{\scrV}
		+\langle\scrU_j\scrV_j\mid1\leq j\leq n\rangle}.
\end{equation*}
Consequently, before the elliptic interchange, the cabling homology decomposes as
\begin{equation*}\label{eq:figureeightcabling}
	H_*(\mathscr C_m(4_1))
	\cong
	H_*(\mathscr C_m(U))
	\oplus
	\bigoplus_{k=1}^{n}T_k\langle[d]\rangle.
\end{equation*}

We next take the colimit. For $k<n$, the transition factor on the $k$th column contains $\tilde{\scrU}_2^{n-k}$, which acts trivially on $T_k$. On the rightmost column, $\phi_0$ is the identity.
Hence
\begin{equation*}\label{eq:figureeightcobordism}
	(\phi_0)_*|_{T_k}=
	\begin{cases}
		0,&k<n,\\
		\mathrm{id},&k=n.
	\end{cases}
\end{equation*}
All torsion summands except $T_n$ therefore vanish in the colimit, while
\[
T_n\cong\frac{R_n}{(\scrU_1,\ldots,\scrU_n,\scrV_1\cdots\scrV_n)}
\]
survives. The unknot summand contributes the colored unknot homology. Applying the elliptic bimodule interchanges $\scrU_j$ and $\scrV_j$, giving the following result.

\begin{prop}\label{prop:figureeightcolored}
	As an ungraded $R_n$-module,
	\[
	\calH_n(4_1)\cong\calH_n(U)\oplus\frac{R_n}{(\scrV_1,\ldots,\scrV_n,\scrU_1\cdots\scrU_n)}.
	\]
\end{prop}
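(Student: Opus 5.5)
The plan is to assemble the computation carried out just before the statement into a proof in three steps: the cabling homology, the colimit, and the elliptic interchange.

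\textbf{Step 1: split the paired complex.} Start from the splitting \eqref{eq:figureeightsplitting}, $\CFKm(4_1)\simeq R\langle z\rangle\oplus Q$. Replace $\calX_0(4_1)^{\calK}$ by a model in which both the inclusion and the flip respect this splitting. The inclusion preserves it automatically. For the flip, I would use that $Q\otimes_R S\simeq 0$, as in Lemma~\ref{lem:localizedknot}: any cross term of the flip landing in or leaving the localized $Q$ can be homotoped away, since the target (respectively source) is contractible. This choice of flip representative is the main obstacle. One must check that the resulting type-$D$ module is homotopy equivalent to the true bordered invariant, and I would do this by a homological perturbation argument applied to the contraction of $Q\otimes_R S$.

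\textbf{Step 2: compute the two summands.} With the splitting in hand, $\mathscr C_m(4_1)\simeq \mathscr C_m(U)\oplus \mathscr C_m(Q)$.
\begin{itemize}
\item In $\mathscr C_m(Q)$, the $\bfI_1$ parts are contractible. Since there are no maps from $\bfI_1$ back to $\bfI_0$, what remains is $\bigoplus_k Q\otimes_R M_k^0$.
\item Lemma~\ref{lem:cablingflat} (flatness) together with the Koszul resolution gives
\[
H_*(Q\otimes_R M_k^0)=\bigl(M_k^0/(\tilde{\scrU}_2,\tilde{\scrV}_2)M_k^0\bigr)\langle[d]\rangle .
\]
\item To identify this quotient with $T_k$, use the relations of Subsection~\ref{subsec:An0}. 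Killing $\tilde{\scrU}_2$ and $\tilde{\scrV}_2$ kills all $x^0_{k,i}$ with $i\neq0$, leaving $x^0_{k,0}$. The remaining relations become $(\scrU\cdots\scrU)_{n-k+1}=0$ and $(\scrV\cdots\scrV)_k=0$, ranging over all choices of positions, together with $\scrU_j\scrV_j=\tilde U_2=0$.
\end{itemize}

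\textbf{Step 3: take the colimit.} By Theorem~\ref{theorem:cobordism-pattern-intro}, the connecting maps are $\widehat\boxtimes\phi_0$, using the formulas of Subsection~\ref{subsec:cob}. These act diagonally with respect to the splitting, because they act only on the cabling factor. Colimits commute with direct sums, so the unknot summand contributes $\calH_n(U)$ by the unknot computation. On $T_k$ with $k<n$, $\phi_0$ multiplies by a positive power of $\tilde{\scrU}_2$, which is zero there, so these summands die in the colimit. On $T_n$, $\phi_0$ is the identity, so it survives, giving $T_n=R_n/(\scrU_1,\ldots,\scrU_n,\scrV_1\cdots\scrV_n)$. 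Here the relation $\scrU_j\scrV_j=0$ is already implied by the vanishing of the $\scrU_i$.

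\textbf{Step 4: elliptic interchange.} Applying the elliptic bimodule to restore the orientation (Remark~\ref{remark:orientation-}) swaps $\scrU_j\leftrightarrow\scrV_j$. This gives the stated formula as ungraded $R_n$-modules.
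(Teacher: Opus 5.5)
Your proposal follows the paper's argument essentially step for step: split $\CFKm(4_1)\simeq R\langle z\rangle\oplus Q$, contract the $\bfI_1$ copies of the square, use flatness and the Koszul resolution to get $T_k\langle[d]\rangle$ in each $\bfI_0$ column, kill $T_k$ for $k<n$ through the factor $\tilde{\scrU}_2^{n-k}$ in $\phi_0$, keep $T_n$, and apply the elliptic interchange. Your extra care in choosing a flip representative that respects the splitting only expands the paper's one-line assertion; for the cross term leaving $Q$, the contractible complex you need is $Q[\tilde{\scrU}_2^{-1}]$, since the $T$-semilinear flip factors through it, and not $Q\otimes_R S$.
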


We now consider the left-handed trefoil, restricting attention to its colored colimit. Here $\scrU$ and $\scrV$ denote the companion variables, and we write $C=\CFKm(T_{2,-3})$. The complex $C$ has generators $a,b,c$, with
\begin{equation*}\label{eq:lefttrefoilcomplex}
	\partial a=\scrV b,\qquad
	\partial c=\scrU b,\qquad
	\partial b=0.
\end{equation*}
Its homology is
\begin{equation*}\label{eq:lefttrefoilhomology}
	H_*(C)\cong R\langle e\rangle\oplus R/(\scrU,\scrV)\langle b\rangle,\qquad e=\scrU a+\scrV c.
\end{equation*}
Thus the left-handed trefoil and the figure-eight knot have isomorphic knot Floer homology as ungraded $R$-modules, although their chain complexes are different. Unlike the figure-eight complex $R\langle z\rangle\oplus Q$, the trefoil complex is not a free resolution of its homology module. Its free cycle $e$ involves the same generators that impose the relations on $ b$. Consequently, the homology decomposition alone does not give a splitting of the cabling computation: the chains imposing these relations can also have components in $\bfI_1$.

We first apply Lemma~\ref{lem:localizedknot} to reduce the $\bfI_1$ complex. After inverting $\scrV$, the surviving generator is
\[
\theta=c+\scrU\scrV^{-1}a,
\]
and we may choose the retraction
\[
\pi_1(a)=\pi_1(b)=0,\qquad \pi_1(c)=\theta.
\]
The localized flip can be represented by
\[
\tau(a)=\scrV^2c,\qquad
\tau(b)=b,\qquad
\tau(c)=\scrV^{-2}a.
\]
It follows that the reduced inclusion and unframed flip satisfy
\[
\pi_1\iota(e)=\scrV\theta,\qquad
\pi_1\tau(e)=\scrV\theta,\qquad
\pi_1\iota(b)=\pi_1\tau(b)=0.
\]
In particular, the torsion class lies in the kernel of both maps, while the maps on the free class $e$ agree with the unknot maps up to the invertible factor $\scrV$ in the target.

By Lemma~\ref{lem:cablingflat}, the homology of each $\bfI_0$ column is
\begin{equation*}\label{eq:lefttrefoilI0}
	H_*(C\otimes_R M_k^0)\cong	M_k^0\langle e\rangle\oplus T_k\langle b\rangle,\qquad T_k=\frac{M_k^0}{(\scrU,\scrV)M_k^0}.
\end{equation*}
By the preceding cabling computation, there is no surviving $\bfI_1$ contribution in the colimit. The free part behaves as in the unknot case and is identified with the rightmost summand $M_n^0\langle e\rangle$, corresponding to $x_n^0$. As in the figure-eight computation, the cobordism maps annihilate
$T_k$ for $k<n$ and act as the identity on $T_n$. Hence only the torsion in the rightmost summand survives, and we obtain
\[
\varinjlim_m H_*(\mathscr C_m(T_{2,-3}))\cong M_n^0\langle e\rangle\oplus T_n\langle b\rangle.
\]

After the elliptic interchange, this gives an isomorphism of ungraded $R_n$-modules
\begin{equation*}\label{eq:lefttrefoilcolored}
	\calH_n(T_{2,-3})\cong\calH_n(U)\oplus\frac{R_n}{(\scrV_1,\ldots,\scrV_n,\scrU_1\cdots\scrU_n)}.
\end{equation*}

\section{The colored colimit for general knots}
\label{sec:chaincolimit}

The computations in the preceding section suggest that the colored colimit is governed by the rightmost cabling summand $M_n^0$ and depends only on knot Floer homology. We prove this for the cabling chain model of a general knot, establishing Theorem~\ref{theorem:colored-general-intro}.

\subsection{Colimits and localization}
\label{subsec:colimitlocalization}

We begin with two standard algebraic facts.

\begin{lem}\label{lem:homologycolimit}
	Let
	\[
	D_0\xrightarrow{f_0}D_1\xrightarrow{f_1}D_2
	\longrightarrow\cdots
	\]
	be a directed system of chain complexes of $A$-modules and chain
	maps. Set $D_\infty=\varinjlim_m D_m$, using the ordinary algebraic
	colimit. Then the natural map
	\begin{equation*}\label{eq:homologycolimit}
		\varinjlim_m H_*(D_m)
		\xrightarrow{\;\cong\;}
		H_*(D_\infty)
	\end{equation*}
	is an isomorphism.
\end{lem}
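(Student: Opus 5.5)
The plan is to prove this the standard way, using exactness of filtered colimits of modules over $A$. First I fix conventions. The colimit $D_\infty$ is computed degreewise in the category of $A$-modules. Concretely, each element of $D_\infty$ is represented by some $x\in D_m$. Two representatives $x\in D_m$ and $x'\in D_{m'}$ are identified when their images agree in some later $D_k$. The differential on $D_\infty$ is induced from the differentials $\partial_m$; this is well defined because the $f_m$ are chain maps. The natural map $\Phi:\varinjlim_m H_*(D_m)\to H_*(D_\infty)$ comes from the canonical chain maps $\iota_m:D_m\to D_\infty$. These are compatible with the $f_m$, so by the universal property of the colimit they induce $\Phi$.

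\textbf{Surjectivity.} Let $[z]\in H_*(D_\infty)$, and represent $z$ by some $x\in D_m$. Then $\partial x$ maps to $0$ in $D_\infty$. Hence there is $k\ge m$ such that $f_{k-1}\circ\cdots\circ f_m(\partial x)=0$. The image $x_k$ of $x$ in $D_k$ is therefore a cycle, since the $f$'s commute with $\partial$. The class $[x_k]\in H_*(D_k)$ then maps to $[z]$ under $\Phi$.

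\textbf{Injectivity.} Suppose a class in $\varinjlim H_*(D_m)$ is represented by $[x]\in H_*(D_m)$ and maps to $0$ in $H_*(D_\infty)$. Then $\iota_m(x)=\partial w$ for some $w\in D_\infty$, and $w$ is represented by some $y\in D_{m'}$. Push both $x$ and $y$ to a common later index $k$. The element $x_k-\partial y_k$ maps to zero in $D_\infty$, so it vanishes in some $D_{k'}$ with $k'\ge k$. There $x_{k'}=\partial y_{k'}$, so $[x]$ becomes zero in $H_*(D_{k'})$, and hence it is zero in the colimit.

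Equivalently, one can argue abstractly. Filtered colimits of $A$-modules are exact, so they commute with the kernel, image and quotient constructions that define $H_*=\ker\partial/\operatorname{im}\partial$. The only mild point requiring care, and the one I would check first, is that the colimit here is the \emph{ordinary algebraic} colimit. No completion is involved, unlike the topologized box tensor products used earlier, so exactness genuinely applies. The indexing set $\bbN$ is directed, which is exactly what the elementwise description above uses. I expect no real obstacle beyond keeping track of the common index in the injectivity step.
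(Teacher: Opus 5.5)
Your proof is correct and is essentially the paper's. Your closing abstract argument is exactly the paper's proof: directed colimits of $A$-modules are exact, so they commute with kernels, images and quotients. Your elementwise surjectivity and injectivity checks simply unpack that exactness directly for this $\mathbb{N}$-indexed system.
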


\begin{proof}
	Directed colimits of modules are exact; see
	\cite[Chapter~2]{atiyah2018introduction}.
	They therefore commute with taking kernels, images, and their
	quotient, which gives the claimed isomorphism.
\end{proof}

For a system whose transition maps are multiplication by a fixed
element, the colimit has an explicit description as a localization.
We use the notation $M[q^{-1}]$ for localization at
$\{1,q,q^2,\ldots\}$; see
\cite[Chapter~3]{atiyah2018introduction}.

\begin{lem}\label{lem:sequentiallocalization}
	Let $A$ be a commutative ring, $q\in A$, and $M$ an $A$-module.
	There is a natural $A$-linear isomorphism
	\begin{equation*}\label{eq:colimitlocalization}
		\varinjlim\bigl(
		M\xrightarrow{q}M\xrightarrow{q}\cdots
		\bigr)
		\cong M[q^{-1}],
		\qquad
		[x,m]\longmapsto\frac{x}{q^m},
	\end{equation*}
	where the first copy of $M$ is stage $0$.
\end{lem}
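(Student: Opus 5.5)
The plan is to write down mutually inverse $A$-linear maps explicitly, check they are well defined on the colimit, and then check both composites are the identity. The colimit $\varinjlim(M\xrightarrow{q}M\xrightarrow{q}\cdots)$ is modeled in the standard way. It is the quotient of $\bigoplus_{m\ge0}M$ by the submodule generated by $(x \text{ at stage } m) - (qx \text{ at stage } m+1)$. A class $[x,m]$ denotes the image of $x$ placed at stage $m$. Every element of the colimit has this form, because any finite sum of classes can be pushed to a common stage. Moreover, $[x,m]=[x',m']$ if and only if $q^{k+m'}x=q^{k+m}x'$ for some $k\ge0$. This is the usual description of directed colimits of modules, as in \cite[Chapter~2]{atiyah2018introduction}.

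First, define $\Phi([x,m])=x/q^m\in M[q^{-1}]$. To see that $\Phi$ is well defined, note that it respects the generating relations: the class $[qx,m+1]$ maps to $qx/q^{m+1}=x/q^m$. The map $\Phi$ is clearly $A$-linear, since it is induced from the $A$-linear maps $M\to M[q^{-1}]$, $x\mapsto x/q^m$, and these are compatible with the transition maps.

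Second, define $\Psi(x/q^m)=[x,m]$. For this to be well defined, suppose $x/q^m=x'/q^{m'}$ in $M[q^{-1}]$. By the definition of localization at the multiplicative set $\{q^k\}$ \cite[Chapter~3]{atiyah2018introduction}, this means $q^k(q^{m'}x-q^mx')=0$ for some $k\ge0$. Pushing both classes to stage $m+m'+k$ gives $[x,m]=[q^{m'+k}x,m+m'+k]$ and $[x',m']=[q^{m+k}x',m+m'+k]$. These agree by the displayed equality, so $\Psi$ is well defined. It is additive, since fractions can be brought to a common denominator $q^m$, and it is $A$-linear.

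Finally, $\Phi\Psi$ and $\Psi\Phi$ are visibly the identity on generators, so $\Phi$ is an isomorphism. Naturality in $M$ is immediate from the formula.

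The only point requiring care is the well-definedness of $\Psi$, because $q$ may be a zero divisor on $M$. The torsion factor $q^k$ in the localization relation is exactly absorbed by moving further along the directed system. I expect no genuine obstacle here.
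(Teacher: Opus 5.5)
Your proof is correct and is essentially the paper's argument. The paper checks surjectivity and injectivity of the same map $[x,m]\mapsto x/q^m$ instead of writing down an explicit inverse. Your well-definedness check for $\Psi$, which absorbs the torsion factor $q^k$ by moving further along the system, is the same computation as the paper's injectivity step.
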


\begin{proof}
	The maps $M\to M[q^{-1}]$ given at stage $m$ by
	$x\mapsto x/q^m$ are compatible with the transition maps, since
	\[
	\frac{qx}{q^{m+1}}=\frac{x}{q^m}.
	\]
	They therefore induce the stated map from the colimit.
	Every localized element has the form $x/q^m$, so this map is
	surjective. If $x/q^m=0$, then $q^r x=0$ for some $r\geq0$.
	Thus the representative $x$ becomes zero at stage $m+r$,
	proving injectivity.
\end{proof}

Under this identification, the inverse of multiplication by $q$ is
\[
[x,m]\longmapsto[x,m+1].
\]
Thus passing to the colimit makes $q$ invertible and kills every
element annihilated by a power of $q$.

In our application, the transition factors are monomials in $\scrU$ and $\scrV$. If $q=\scrU^a\scrV^b$ with $a,b>0$,
localization at $q$ inverts both variables, since
\[
\scrU^{-1}
=\frac{\scrU^{a-1}\scrV^b}{q},
\qquad
\scrV^{-1}
=\frac{\scrU^a\scrV^{b-1}}{q}.
\]
Conversely, inverting both variables makes $q$ invertible.
We will apply this observation separately to the cabling columns,
using their respective transition factors.

\subsection{The cabling chain complex in the colimit}
\label{subsec:colimitmodel}
We retain the cabling summands $M_k^\varepsilon$ defined above.
At framing $m$, the $\bfI_0$ columns are $C\otimes_R M_k^0$,
for $1\leq k\leq n$, and the $\bfI_1$ columns are
$C^1\otimes_S M_k^1$, for $2\leq k\leq n$.
Besides their internal differentials, the paired complex has the maps
\begin{equation*}\label{eq:colimitarrows}
	\sigma_k:C\otimes_R M_k^0\longrightarrow C^1\otimes_S M_k^1,
	\qquad
	\scrV^m\tau_k:C\otimes_R M_k^0
	\longrightarrow C^1\otimes_S M_{k+1}^1.
\end{equation*}
Here $\sigma_k$ pairs the inclusion $\iota:C\to C^1$ with the cabling-module map $x_k^0\mapsto x_k^1$, while $\tau_k$ pairs the
localized flip with the cabling-module map $x_k^0\mapsto x_{k+1}^1$. Their ranges are $2\leq k\leq n$ and $1\leq k<n$, respectively.

Recall that the cobordism map $\phi_m:\mathscr C_m(C)\to\mathscr C_{m+1}(C)$ acts on either idempotent in column $k$ by multiplication by
\begin{equation}\label{eq:colimitq}
	q_k=\scrU^{n-k}U^{\binom{n-k}{2}},
	\qquad q_n=1.
\end{equation}
We check that this diagonal map commutes with the differential.
The inclusion term $\sigma_k$ is the same at both framings and
has column $k$ as both its source and target. For the flip term,
the required commutative square is
\[
\begin{tikzcd}[column sep=large, row sep=large]
	C\otimes_R M_k^0
	\arrow[r,"\scrV^m\tau_k"]
	\arrow[d,"q_k"']
	&
	C^1\otimes_S M_{k+1}^1
	\arrow[d,"q_{k+1}"]
	\\
	C\otimes_R M_k^0
	\arrow[r,"\scrV^{m+1}\tau_k"']
	&
	C^1\otimes_S M_{k+1}^1 .
\end{tikzcd}
\]

\begin{lem}\label{lem:colimitcompatibility}
	For $1\leq k<n$, we have
	\begin{equation*}\label{eq:qflipcompatibility}
		q_{k+1}=\scrV T(q_k).
	\end{equation*}
	Hence the square above commutes, and the diagonal cobordism
	map $\phi_m$ is a chain map.
\end{lem}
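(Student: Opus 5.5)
The plan is a direct computation in the localized coefficient ring $S=\bbF[\scrU,\scrV,\scrV^{-1}]$ (the companion variables $\tilde\scrU_2,\tilde\scrV_2$), followed by a check that the cobordism map commutes with each kind of arrow. Here $T$ is the twisting automorphism of $S$ governing the $\tau$-action, i.e.\ the map $\phi^\tau$ of the knot surgery algebra: $T(\scrU)=\scrV^{-1}$ and $T(\scrV)=\scrU\scrV^2$. The first observation is that $T$ fixes $U=\scrU\scrV$, since $T(\scrU\scrV)=\scrV^{-1}\scrU\scrV^2=\scrU\scrV$.

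Applying $T$ to the formula \eqref{eq:colimitq} gives
\[
T(q_k)=\scrV^{-(n-k)}U^{\binom{n-k}{2}} .
\]
I will then use the identity $\binom{n-k}{2}=\binom{n-k-1}{2}+(n-k-1)$ to write $U^{\binom{n-k}{2}}=U^{\binom{n-k-1}{2}}\scrU^{n-k-1}\scrV^{n-k-1}$. Multiplying by $\scrV$ gives
\[
\scrV\,T(q_k)=\scrV^{1-(n-k)}\scrV^{n-k-1}\scrU^{n-k-1}U^{\binom{n-k-1}{2}}=q_{k+1}.
\]
I will check the endpoint $k=n-1$ separately, where $q_{n-1}=\scrU$ and $q_n=1$. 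There $\scrV T(\scrU)=\scrV\scrV^{-1}=1$, consistent with the general formula. I also note that $q_k$ agrees with the exponents $\tilde\scrU_2^{n-k}\tilde U_2^{(n-k-1)(n-k)/2}$ computed for $\phi_0$ in Subsection~\ref{subsec:cob}.

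For the chain-map statement, I split the differential of $\mathscr C_m(C)$ into three kinds of term. The first kind is the internal differentials of $C\otimes_R M_k^0$ and $C^1\otimes_S M_k^1$. These are $R$-linear (respectively $S$-linear), so they commute with multiplication by $q_k$. The second kind is the inclusion term $\sigma_k$. It is the localization map $\iota$ paired with $x_k^0\mapsto x_k^1$, so it is linear for the ordinary coefficient action and keeps source and target in the same column. Both vertical maps are therefore multiplication by the same $q_k$, and they commute. The third kind is the flip term, and the key input here is that $\tau_k$ is $T$-semilinear, $\tau_k(a\,y)=T(a)\tau_k(y)$, by the relations $\tau\scrU=\scrV^{-1}\tau$ and $\tau\scrV=\scrU\scrV^2\tau$ in $\calK$. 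Using this and the identity just proved,
\[
\scrV^{m+1}\tau_k(q_k y)=\scrV^{m}\,\scrV T(q_k)\,\tau_k(y)=q_{k+1}\,\scrV^m\tau_k(y),
\]
which is exactly commutativity of the square.

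The main point needing care is this semilinearity claim. Specifically, the combined map $\tau_k$ (the localized flip on $C$ tensored with the cabling map $x_k^0\mapsto x_{k+1}^1$) must twist the companion coefficients by $T$ and leave the $R_n$-coefficients unchanged. I will justify this from the fact that $\tau_k$ comes from $m_2(\tau,-)$ and the type-$D$ $\tau$-output of $\calX_0(K)^\calK$, together with the algebra relations in $\calK$. Since there are no maps from $\bfI_1$ back to $\bfI_0$, these three checks exhaust the differential. This proves that $\phi_m$ is a chain map.
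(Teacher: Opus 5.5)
Your proposal is correct and follows the paper's proof essentially line for line. You derive $\scrV T(q_k)=q_{k+1}$ from $T(\scrU)=\scrV^{-1}$, $T(U)=U$ and $\binom{d}{2}=\binom{d-1}{2}+(d-1)$, get the square from $T$-semilinearity of the flip, and note that the internal differentials and the inclusion terms $\sigma_k$ commute with multiplication by the common factor $q_k$. You also justify that semilinearity from the relations $\tau\scrU=\scrV^{-1}\tau$ and $\tau\scrV=\scrU\scrV^2\tau$ in $\calK$, a step the paper only asserts.
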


\begin{proof}
	Set $d=n-k$. Since $T(\scrU)=\scrV^{-1}$ and $T(U)=U$,
	\[
	\scrV T(q_k)
	=\scrV^{1-d}U^{\binom d2}
	=\scrU^{d-1}U^{\binom{d-1}{2}}
	=q_{k+1}.
	\]
	The flip is $T$-semilinear. Thus, for $x$ in the $\bfI_0$
	column $k$,
	\[
	(\scrV^{m+1}\tau_k)(q_kx)
	=\scrV^{m+1}T(q_k)\tau_k(x)
	=q_{k+1}(\scrV^m\tau_k)(x).
	\]
	This is exactly the commutativity of the displayed square.
	Also, $\sigma_k(q_kx)=q_k\sigma_k(x)$, since the same
	factor $q_k$ acts on both columns joined by $\sigma_k$.
	The internal differentials commute with multiplication by
	$q_k$, so $\phi_m$ commutes with the full differential.
\end{proof}

We now take the colimit of each column. By
Lemma~\ref{lem:sequentiallocalization}, the resulting chain
complexes are
\[
A_k:=
\varinjlim_m\bigl(
C\otimes_R M_k^0\xrightarrow{q_k}
C\otimes_R M_k^0\xrightarrow{q_k}\cdots
\bigr)
\cong (C\otimes_R M_k^0)[q_k^{-1}]
\]
for $1\leq k\leq n$, and
\[
B_k:=
\varinjlim_m\bigl(
C^1\otimes_S M_k^1\xrightarrow{q_k}
C^1\otimes_S M_k^1\xrightarrow{q_k}\cdots
\bigr)
\cong (C^1\otimes_S M_k^1)[q_k^{-1}]
\]
for $2\leq k\leq n$. In particular,
\[
A_n=C\otimes_R M_n^0.
\]

The commutative squares above let the inclusion and framed flip
descend to maps between colimit columns. If $[x,m]$ denotes an
element represented at framing $m$, these maps are
\[
\overline\sigma_k([x,m])=[\sigma_k(x),m],
\qquad
\overline\tau_k([x,m])
=[\scrV^m\tau_k(x),m].
\]
Under the localization identifications, they become
\[
\overline\sigma_k\left(\frac{x}{q_k^m}\right)
=\frac{\sigma_k(x)}{q_k^m},
\qquad
\overline\tau_k\left(\frac{x}{q_k^m}\right)
=\frac{\scrV^m\tau_k(x)}{q_{k+1}^m}
=\frac{\tau_k(x)}{T(q_k)^m}.
\]
The last expression lies in $B_{k+1}$ because
$T(q_k)=\scrV^{-1}q_{k+1}$ and $\scrV$ is invertible there.
Thus the colimit complex has off-diagonal maps
$\overline\sigma_k:A_k\to B_k$ and
$\overline\tau_k:A_k\to B_{k+1}$.
\begin{lem}\label{lem:localizedpatternflip}
	For $1\leq k<n$, the flip map extends to an $R_n$-linear,
	$T$-semilinear isomorphism
	\[
	M_k^0[\scrU^{-1}]\xrightarrow{\cong}M_{k+1}^1,
	\qquad x_k^0\longmapsto x_{k+1}^1.
	\]
	It induces an isomorphism
	\begin{equation}\label{eq:patternflipinfinity}
		M_k^0[q_k^{-1}]\xrightarrow{\cong}
		M_{k+1}^1[q_{k+1}^{-1}].
	\end{equation}
	Consequently, the induced map
	$\overline{\tau}_k\colon A_k\to B_{k+1}$ is a chain homotopy
	equivalence.
\end{lem}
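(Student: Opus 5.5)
We prove the three assertions in order, working with the explicit presentations of $M_k^0$ and $M_{k+1}^1$ from Subsection~\ref{subsec:An0} and Proposition~\ref{propnewbasis}.

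\textbf{Step 1: the semilinear map $M_k^0[\scrU^{-1}]\to M_{k+1}^1$.} We define it on generators by $x_{k,i}^0\mapsto x_{k+1,i}^1$, extended $R_n$-linearly and $T$-semilinearly in the companion variables. The $T$-semilinear extension is forced because $\tau$ intertwines $\tilde\scrU_2\mapsto\tilde\scrV_2^{-1}$ and $\tilde\scrV_2\mapsto\tilde\scrU_2\tilde\scrV_2^2$. Up to the powers of $\tilde U_2$ in $m_2(\tau,x_{k,i}^0)$, this is the structure map $m_2(\tau,-)$. Those powers are monomials in $R_n$ that become units after inverting $\scrU$, so we absorb them by rescaling.

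Well-definedness amounts to checking that the defining relations of $M_k^0$ map into relations of $M_{k+1}^1$. Under $T$, the relation
\[(\scrU\cdots\scrU)_{n-k+1}x_{k,i}^0=(\scrV\cdots\scrV)_{k-1}x_{k,i+1}^0\]
must match the single family of relations
\[(\scrU\cdots\scrU)_{n-k}x_{k+1,i}^1=(\scrV\cdots\scrV)_{k}x_{k+1,i+1}^1.\]
The extra factor in the first relation is reconciled using $\scrU_{1,j}\scrV_{1,j}=\tilde U_2$ and the fact that $\tilde U_2$ becomes a unit once $\tilde\scrU_2$ is inverted. The two families of relations for $k\in\{2,\dots,n-1\}$, indexed by $i\ge0$ and $i\le0$, collapse to one family after localization. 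This matches the single family in $\bfI_1$.

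For bijectivity, both sides are generated by the classes $x_{\bullet,i}$ with $i\in\mathbb Z$, where $i$ records the companion exponent. By Lemma~\ref{lem:cablingflat} and Proposition~\ref{prop:1stbasis}, after localization both are free of rank one over $S$ on $x_k^0$ and $x_{k+1}^1$ respectively. The map therefore sends a basis to a basis, with $T$ an automorphism of $S$. This rank-one comparison is the cleanest route.

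\textbf{Step 2: passing to $q$-localizations.} By Lemma~\ref{lem:colimitcompatibility}, $q_{k+1}=\scrV T(q_k)$, and $\scrV$ is a unit on the target. Hence the image of $q_k$ under the semilinear isomorphism generates the same multiplicative set as $q_{k+1}$. For $k<n$, $q_k$ involves a positive power of $\scrU$ times $U$-powers, so inverting $q_k$ already inverts $\scrU$. Consequently $M_k^0[q_k^{-1}]=M_k^0[\scrU^{-1}][q_k^{-1}]$. Applying Step 1 and the universal property of localization gives \eqref{eq:patternflipinfinity}.

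\textbf{Step 3: the chain homotopy equivalence.} Write
\[\overline\tau_k\colon (C\otimes_R M_k^0)[q_k^{-1}]\to (C^1\otimes_S M_{k+1}^1)[q_{k+1}^{-1}].\]
Since $q_k$ inverts $\scrU$ and hence $\tilde\scrU_2$, the source is $C[\scrU^{-1}]\otimes M_k^0[q_k^{-1}]$. The map $\overline\tau_k$ factors as the knot flip $\tau\colon C[\scrU^{-1}]\to C[\scrV^{-1}]$ tensored with the isomorphism \eqref{eq:patternflipinfinity}.

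The knot flip is a homotopy equivalence on localizations. This is the standard fact that the flip map is a chain homotopy equivalence $C[\scrU^{-1}]\simeq C[\scrV^{-1}]$, intertwining the two large-surgery identifications with $\mathit{CF}^\infty(S^3)$. Both sides are homotopy equivalent to a rank-one complex, via Lemma~\ref{lem:localizedknot} and its mirror for $\scrU$. Using these retractions, it remains to check that $\tau$ sends the surviving generator to a unit multiple of the surviving generator.

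\textbf{Main obstacle.} The expected difficulty is this last point: identifying the reduced flip with multiplication by a unit for a general knot. For L-space knots this is visible explicitly in \eqref{eq:Lspacereducedmaps}. In general we would argue via the $\tilde U_2$-action: after inverting both variables, $H_*(C)$ is free of rank one over $\mathbb F[U,U^{-1}]$ in each grading, and $\tau$ induces an isomorphism there. The flatness of the pattern modules (Lemma~\ref{lem:cablingflat}) then transports the homotopy equivalence through $\otimes M$.
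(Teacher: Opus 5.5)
\textbf{Gap: the bijectivity argument in Step 1 fails.} The claim that ``after localization both are free of rank one over $S$ on $x_k^0$ and $x_{k+1}^1$'' is false, so ``sends a basis to a basis'' proves nothing.

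Lemma~\ref{lem:cablingflat} gives freeness over $R$ (resp.\ $S$) on the surviving red generators, one per mixed zigzag. Each column contains infinitely many mixed zigzags. For instance, $(\bfa_1,\ldots,\bfa_n)$ and $(\scrU_{1,1}\bfa_1,\bfa_2,\ldots,\bfa_n)$ lie in different zigzags, because $\tau$ shifts all exponents simultaneously. The element $x_k^0$ generates only over $R_n\otimes R$, and not freely: the relations of Subsection~\ref{subsec:An0} are the whole content. Moreover, $T$ is not an automorphism of $S$. It is a ring isomorphism $R[\tilde\scrU_2^{-1}]\to S$; note that $T(\tilde\scrV_2^{-1})=\tilde\scrU_2^{-1}\tilde\scrV_2^{-2}\notin S$.

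What you need is the paper's argument. Define the inverse as the $T^{-1}$-semilinear map with $x_{k+1}^1\mapsto x_k^0$. Since $T^{-1}(\tilde\scrV_2)=\tilde\scrU_2^{-1}$, the single $\bfI_1$ family $(\scrU\cdots\scrU)_{n-k}x^1_{k+1,i}=(\scrV\cdots\scrV)_k x^1_{k+1,i+1}$ pulls back to $(\scrU\cdots\scrU)_{n-k}\tilde\scrU_2 x_k^0=(\scrV\cdots\scrV)_k x_k^0$. This is the $i\le 0$ family of $M_k^0$ at $i=0$. This step is exactly where inverting $\scrU$ on the source is forced: without it, $x^1_{k+1,1}=\tilde\scrV_2x^1_{k+1}$ has no preimage. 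Both composites fix the cyclic generator, hence are identities. Your observation that the two $\bfI_0$ families collapse to one after localization is correct. It is precisely what makes the two presentations match under $\mathrm{id}_{R_n}\otimes T$.

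\textbf{Well-definedness rests on a false unit claim.} Your justification for well-definedness assumes $\tilde U_2$ becomes a unit after inverting $\scrU$. It does not: $\tilde U_2=\tilde\scrU_2\tilde\scrV_2$ is not a unit in $M_k^0[\scrU^{-1}]$, which is free over $R[\tilde\scrU_2^{-1}]$, nor in $M_{k+1}^1$. Monomials in $R_n$ are never units either, so nothing can be ``absorbed by rescaling.''

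No rescaling is needed in any case. Your assignment $x_{k,i}^0\mapsto x_{k+1,i}^1$ for all $i$ is actually inconsistent with $T$-semilinearity. For $i\ge 0$, semilinearity forces $x_{k,i}^0=\tilde\scrV_2^i x_k^0\mapsto T(\tilde\scrV_2)^i x_{k+1}^1=\tilde U_2^i x^1_{k+1,i}$. So the $T$-semilinear extension of $x_k^0\mapsto x_{k+1}^1$ is exactly $m_2(\tau,-)$. Those $\tilde U_2^i$ factors are what reconcile the $i\ge 0$ relation with the $\bfI_1$ relation, via $\scrU_{1,a}\scrV_{1,a}=\tilde U_2$ and without any units.

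\textbf{Steps 2 and 3.} These agree with the paper, which likewise just cites that the localized knot flip is a $T$-semilinear homotopy equivalence. Your ``main obstacle'' detour is unnecessary. Its appeal to inverting both variables also does not cover $k=n-1$, where $q_{n-1}=\scrU$ inverts only $\scrU$. Finally, flatness is not needed to carry a chain homotopy equivalence through a tensor product.
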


\begin{proof}
	It is straightforward to check that the flip map respects the relations in Subsection~\ref{subsec:An0}. The same check with $T^{-1}$ gives the inverse. Hence, it induces the isomorphism. Since
	$q_k$ contains a positive power of $\scrU$, localizing at $q_k$
	also inverts $\scrU$. Moreover,
	$q_{k+1}=\scrV T(q_k)$, and $\scrV$ is already invertible in
	$M_{k+1}^1$. Further localization therefore gives
	Equation~\ref{eq:patternflipinfinity}.
	
	Finally, the localized knot flip
	\[
	\Psi\colon C[\scrU^{-1}]\longrightarrow C[\scrV^{-1}]
	\]
	is a $T$-semilinear chain homotopy equivalence, giving the assertion
	for $\overline{\tau}_k$.
\end{proof}

\subsection{Cancellation to the rightmost column}

\begin{proof}[Proof of Theorem~\ref{theorem:colored-general-intro}]
	Let
	\[
	\mathscr C_\infty(C)=\varinjlim_m\mathscr C_m(C).
	\]
	By the preceding subsection, its columns are $A_1,\ldots,A_n$
	and $B_2,\ldots,B_n$, with off-diagonal maps
	$\overline\sigma_k:A_k\to B_k$ and
	$\overline\tau_k:A_k\to B_{k+1}$.
	
	Consider the subcomplex containing every column except $A_n$:
	\[
	E=
	\bigoplus_{k=1}^{n-1}A_k
	\oplus
	\bigoplus_{k=2}^{n}B_k.
	\]
	It is a subcomplex because no differential from these columns
	enters $A_n$. Filter it by
	\[
	E_r=
	\bigoplus_{k=1}^{r}A_k
	\oplus
	\bigoplus_{k=2}^{r+1}B_k,
	\qquad 0\leq r\leq n-1,
	\]
	where $E_0=0$. In $E_r/E_{r-1}$, the inclusion map
	$\overline\sigma_r$ disappears because its target $B_r$ lies
	in $E_{r-1}$. Hence this quotient is the mapping cone of
	\[
	\overline\tau_r:A_r\longrightarrow B_{r+1}.
	\]
	By Lemma~\ref{lem:localizedpatternflip}, this map is a chain
	homotopy equivalence, so its cone is contractible. Successively
	canceling these finitely many pairs shows that $E$ is
	contractible.
	
	The quotient of $\mathscr C_\infty(C)$ by $E$ is $A_n$.
	Consequently, projection to the rightmost column is a chain
	homotopy equivalence
	\[
	\mathscr C_\infty(C)\simeq A_n
	=C\otimes_R M_n^0,
	\]
	where the last equality uses $q_n=1$. In particular, the
	internal differential of the full knot complex $C$ remains
	in the rightmost column.
	
	Lemma~\ref{lem:homologycolimit} and
	Lemma~\ref{lem:cablingflat} now give
	\begin{align*}
		\varinjlim_m H_*(\mathscr C_m(C))
		&\cong H_*(\mathscr C_\infty(C))\\
		&\cong H_*(C\otimes_R M_n^0)\\
		&\cong H_*(C)\otimes_R M_n^0.
	\end{align*}
	The final isomorphism uses the flatness of $M_n^0$ over $R$.
	
	For completeness, the rightmost coefficient module has the
	presentation
	\[
	M_n^0\cong\bbF[\scrV,\scrV_1,\ldots,\scrV_n],
	\qquad
	\scrU=\prod_{i=1}^n\scrV_i,
	\qquad
	\scrU_i=\scrV\prod_{j\ne i}\scrV_j.
	\]
	Applying the elliptic interchange to the cable variables gives
	the colored homology in the orientation convention of the
	theorem. This completes the $R_n$-module computation.
\end{proof}

\bibliographystyle{alpha}  
\bibliography{references}
\end{document}